\renewcommand\normalsize{%
    \@setfontsize\normalsize{11.7}{14pt plus .3pt minus .3pt}%
    \abovedisplayskip 10\p@ \@plus4\p@ \@minus4\p@
    \abovedisplayshortskip 6\p@ \@plus2\p@
    \belowdisplayshortskip 6\p@ \@plus2\p@
    \belowdisplayskip \abovedisplayskip}
\renewcommand\small{%
    \@setfontsize\small{9.5}{12\p@ plus .2\p@ minus .2\p@}%
    \abovedisplayskip 8.5\p@ \@plus4\p@ \@minus1\p@
    \belowdisplayskip \abovedisplayskip
    \abovedisplayshortskip \abovedisplayskip
    \belowdisplayshortskip \abovedisplayskip}
\renewcommand\footnotesize{%
    \@setfontsize\footnotesize{8.5}{9.25\p@ plus .1pt minus .1pt}%%
    \abovedisplayskip 6\p@ \@plus4\p@ \@minus1\p@
    \belowdisplayskip \abovedisplayskip
    \abovedisplayshortskip \abovedisplayskip
    \belowdisplayshortskip \abovedisplayskip}
\DeclareMathAlphabet{\mathpzc}{OT1}{pzc}{m}{it}
\numberwithin{equation}{subsection}
\newtheorem{theorem}[equation]{Theorem}
\newtheorem*{theorem*}{Theorem}
\newtheorem*{lemma*}{Lemma}
\newtheorem{proposition}[equation]{Proposition}
\newtheorem*{proposition*}{Proposition}
\newtheorem{lemma}[equation]{Lemma}
\newtheorem*{conjecture*}{Conjecture}
\theoremstyle{definition}
\newtheorem{definition}[equation]{Definition}
\newtheorem{example}[equation]{Example}
\newtheorem{remark}[equation]{Remark}
\newtheorem{remarks}[equation]{Remarks}
\newcommand{\R}{\mathbb{R}}
\newcommand{\C}{\mathbb{C}} 
\newcommand{\Z}{\mathbb{Z}}
\DeclareMathOperator{\Ad}{Ad} 
\DeclareMathOperator{\ad}{ad}
\DeclareMathOperator{\sign}{sign}
\newcommand{\lie}{\mathfrak}
\newcommand{\cH}{\mathcal{H}}
\newcommand{\bigG}{\mathbf{G}}
\newcommand{\bigH}{\mathbf{H}}
\newcommand{\ldoublebracket}{[\![}
\newcommand{\rdoublebracket}{]\!]}
\newcommand{\BigCell}{\mathcal{U}}
\newcommand{\isotropy}{\mathfrak{h}}
\newcommand{\HBundle}{\mathscr{H}}
\newcommand{\Oshima}{\mathscr{M}}
\newcommand{\Groupoid}{\mathscr{G}}
\newcommand{\Subgroupoid}{\mathscr{H}}
\newcommand{\Man}{\mathscr{M}}
\newcommand{\ManS}{\mathcal{S}}
\newcommand{\Satake}{\mathscr{X}}
\newcommand{\Closed}{\mathscr{C}}
\newcommand{\Subgp}{\mathscr{S}}
\begin{document}

\title[Lie groupoids and the tempered dual I]{Lie groupoids, the Satake compactification and the tempered dual,  I: The Satake groupoid}

\author{Jacob Bradd,  Nigel Higson and Robert Yuncken}

\dedicatory{Dedicated to Georges Skandalis}

\date{}

\thanks{
This research was begun during the 2025 thematic trimester programme on Representation Theory and Noncommutative Geometry at the Institut Henri Poincar\'e. 
The authors acknowledge support of the Institut Henri Poincaré (UAR 839 CNRS-Sorbonne Université), and LabEx CARMIN (ANR-10-LABX-59-01).  This article is also based upon work from COST Action CaLISTA CA21109, supported by COST (European Cooperation in Science and Technology): \url{www.cost.eu}.  
J.~Bradd and R.~Yuncken were supported by project OpART of the Agence Nationale de la Recherche (ANR-23-CE40-0016) and R.~Yuncken was also suppported by project CroCQG (ANR-25-CE40-5010).  N.~Higson was supported  by the NSF grant DMS-1952669.  
}

\begin{abstract}
The (maximal) Satake compactification associated to  a real reductive group $G$ is the closure of the  symmetric space of all maximal compact subgroups of $G$ within the compact space of all closed subgroups of $G$. We shall present three different views of a   groupoid that may be associated to the Satake compactification.
To begin, we shall define our Satake groupoid, as we shall call it, as a topological groupoid, and as a special case of a  general construction of Omar Mohsen.   Then we shall give a Lie-theoretic account of the Satake groupoid, borrowing from work  of Toshio Oshima.  Finally  we shall identify the Satake groupoid with the purely geometric b-groupoid of the Satake compactification, using the structure of the compactification as  a smooth manifold with corners.  In a subsequent paper we shall use the Satake groupoid to present a new proof of Harish-Chandra's principle, that all the tempered irreducible representations of $G$ may be constructed from discrete series representations using parabolic induction. 
\end{abstract}

\maketitle

\section{Introduction} 

The purpose of this paper and its sequel \cite{BraddHigsonYunckenOshimaPart2} is to examine  from the perspective of $C^*$-algebras and noncommutative geometry the following celebrated discovery of Harish-Chandra: 

\begin{theorem*}
Let $G$ be a real reductive group.  A tempered irreducible unitary representation of $G$ is either square-integrable, modulo center, or embeddable into a  representation that is unitarily parabolically induced from a  square-integrable, modulo center,   irreducible unitary representation of a Levi subgroup.
\end{theorem*}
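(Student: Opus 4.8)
The plan is to read off the inducing data of a tempered irreducible representation $\pi$ from the asymptotics of its $K$-finite matrix coefficients, following the circle of ideas around Harish-Chandra's asymptotic expansions and Casselman's theory of the Jacquet module. First I would recall that for an admissible representation of finite length, every $K$-finite matrix coefficient $c_{v,w}(g)=\langle\pi(g)v,w\rangle$ admits a convergent expansion on the positive chamber $A_0^+$ of a maximal split torus $A_0$,
\[
c_{v,w}(a)\ \sim\ \sum_{\mu} a^{\mu-\rho}\,p_\mu(\log a),\qquad a\in A_0^+,
\]
over a finite set of exponents $\mu\in\lie a_{0,\CC}^*$ with polynomial coefficients $p_\mu$. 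Comparing with the polar expression $dg\asymp a^{2\rho}\,dk\,da\,dk'$ for Haar measure shows that the square-integrability of $c_{v,w}$ is governed by the real parts of the leading exponents. The precise dictionary, due to Casselman and Mili\v{c}i\'c (refining Langlands), is that $\pi$ is tempered if and only if, for every leading exponent $\mu$, the functional $\Re\mu$ is a nonpositive real combination of the simple roots (equivalently $\Re\mu(H)\le 0$ for every $H$ in the closed positive chamber $\overline{\lie a_0^+}$), and that $\pi$ is square-integrable modulo the center precisely when $\Re\mu(H)<0$ for every nonzero such $H$.

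This yields an immediate dichotomy. If the strict inequalities hold for every leading exponent, then the coefficients decay exponentially in every direction of $A_0^+$, so $\pi$ is square-integrable modulo the center and we are in the first alternative. Otherwise some leading exponent $\mu$ has $\Re\mu$ vanishing on a nonzero face of $\overline{\lie a_0^+}$. Let $\lie a$ be the linear span of that face, let $A=\exp\lie a$ be the split center of the corresponding standard Levi subgroup $L=MA$, and let $P=MAN$ be the associated standard parabolic. By construction $\Re\mu$ vanishes on $\lie a$, so $\nu:=\mu|_{\lie a}$ is purely imaginary, while $\Re\mu$ is strictly negative along the remaining, $M$-split, directions.

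The crux is to convert this asymptotic information into an actual embedding, and here I would invoke Casselman's theory. The nonvanishing of the leading term along the $\lie a$-face says exactly that the Jacquet module $\pi_N$ carries an $MA$-subquotient on which $A$ acts through the unitary character $e^{\nu}$; the transverse strict negativity identifies the associated irreducible $M$-piece $\sigma$ as a square-integrable-modulo-center representation of $M$, by applying the criterion of the first step internally to the smaller group $M$. Casselman's geometric form of Frobenius reciprocity then turns this $MA$-morphism into a nonzero intertwining operator
\[
\pi\ \hookrightarrow\ \Ind_P^G\bigl(\sigma\otimes e^{\nu}\bigr),
\]
which is injective because $\pi$ is irreducible, and whose target is unitary because $\nu$ is imaginary. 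This is the second alternative.

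The main obstacle is precisely this third step. Harish-Chandra's convergent asymptotic expansions and the exact match between the leading exponents of matrix coefficients and the exponents of the Jacquet modules require sustained analysis: automatic continuity, the Casselman--Wallach globalization, and the finiteness and convergence of the expansions. One must also ensure that the constant term along $P$ is genuinely nonzero and that the $M$-piece $\sigma$ can be taken to be an honest irreducible discrete series summand rather than a mere subquotient. It is here that the temperedness hypothesis does its essential work, forcing $\Re\mu$ to vanish exactly on the chosen face and pinning the inducing parameter $\nu$ to the unitary axis.
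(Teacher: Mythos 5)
You should know at the outset that the paper does not prove this statement at all: it is quoted as Harish-Chandra's theorem to motivate the constructions that follow, and the authors explicitly defer their proof to the sequel, where it is to be extracted from the Satake groupoid $\Groupoid_{\Satake}$ built in this paper and from its $C^*$-algebra, in the spirit of Connes' picture of the tempered dual as a noncommutative space. Your proposal is therefore necessarily a different route --- it is, in outline, the classical one: Harish-Chandra's asymptotic expansions, the Casselman--Mili\v{c}i\'c temperedness and square-integrability criteria in terms of leading exponents, and Casselman's Frobenius reciprocity for the Jacquet module. That route is legitimate (it is essentially how the theorem was first established), and the comparison is instructive: yours stays inside hard analysis on the group, while the paper's program replaces the asymptotic analysis of matrix coefficients by the geometry of the Satake compactification, with the degenerate subgroups $H_I=K_I\overline{N}_I$ and the parabolics entering through the boundary strata of $\Satake$ rather than through exponents.

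Within your own route, however, there is a genuine gap, and it sits exactly where you place the ``crux''. From a single leading exponent $\mu$ whose real part vanishes on the face spanned by $\mathfrak{a}=\mathfrak{a}_I$ you conclude that the corresponding $M$-constituent $\sigma$ of the Jacquet module is square-integrable modulo center ``by transverse strict negativity''. But the square-integrability criterion must be verified for \emph{all} exponents of $\sigma$, and by transitivity of Jacquet functors these are the restrictions of all exponents $\lambda$ of $\pi$ with $\lambda|_{\mathfrak{a}_I}=\nu$, not just of $\mu$. For such a $\lambda$, temperedness of $\pi$ gives only $\Re\lambda=\sum_{\beta\in I}c_\beta\beta$ with every $c_\beta\le 0$; nothing prevents some $c_\beta$ with $\beta\in I$ from vanishing, in which case $\sigma$ is tempered but \emph{not} discrete series, and your dichotomy does not close. (Your choice of face makes the coefficients of $\mu$ itself strictly negative on $I$, but it does not control the other exponents lying over $\nu$.) The standard repair is an induction on the dimension or split rank of the group: prove only that $\sigma$ is tempered, apply the theorem inductively to $\sigma$ as a representation of $M$, and use induction by stages to land in a representation induced from an honest discrete series of a smaller Levi; equivalently, choose $I$ \emph{minimal} subject to the existence of a purely imaginary exponent of $\pi_{N_I}$ and rule out the degenerate case by transitivity. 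This inductive scaffolding --- together with the quotient-versus-subquotient point for Frobenius reciprocity and the promotion of the Harish-Chandra-module embedding to an embedding of unitary representations, which you do flag --- is where the real content of the theorem lives, and it must be built into the argument rather than listed among the obstacles.
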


Harish-Chandra's result played an important role in his  pursuit of the Plancherel formula.  In his review of  Harish-Chandra's Collected Works \cite{Langlands85}, Robert Langlands writes that 

\begin{quote}
     Harish-Chandra discovered quite early on the principles which allowed him to do this [obtain an explicit Plancherel formula] \dots  The critical notions are those of a Cartan subgroup, of a parabolic subgroup, of an induced, and of a square-integrable representation.
 \end{quote}

\smallskip

\begin{quote}
    \dots The first principle is that the representations [parabolically] induced from \dots  square-integrable [representations] suffice for the Plancherel formula \dots
\end{quote}

\smallskip

 \begin{quote}
      \dots The second is that \textup{[}a real reductive group\hspace{1.5pt}\textup{]} has square-integrable representations if and only if there are
      \textup{[}compact\hspace{1.0pt}\textup{]} Cartan subgroups \dots 
 \end{quote}
The second principle has long been  studied from a  geometric perspective, starting with Parthasarathy \cite{Parthasarathy72}, who described the discrete series (the square-integrable representations) using Dirac operators on the symmetric space associated to a real reductive group, continuing with the work of Atiyah and Schmid \cite{AtiyahSchmid77}, and culminating in the work of Lafforgue \cite{Lafforgue02}, who recovered Harish-Chandra's classification of the discrete series using noncommutative geometry and K-theory.   The theorem that we stated above is a precise version of the first principle.
Combined, the two principles paint  in broad outline a picture of the tempered dual of any real reductive group.  

Our approach to the theorem will proceed via  the (maximal) Satake compactification of the Riemmanian symmetric space associated to a real reductive group \cite{Satake60}. We shall incorporate the Satake compactification into an argument involving $C^*$-algebras by associating to it first a groupoid, and then the $C^*$-algebra of that groupoid.  The purpose of  this  paper is to describe the groupoid from three different points of view: those of topology, Lie theory and geometry.  As for $C^*$-algebras, their relevance to Harish-Chandra's principle is a reflection of Alain Connes' idea that the reduced $C^*$-algebra of a real reductive group should be viewed as the algebra of continuous functions on the tempered dual \cite{Connes:NCG}.  But we shall say no more about $C^*$-algebras here, and instead refer the reader to the sequel,  \cite{BraddHigsonYunckenOshimaPart2}.

The fastest way to present the \emph{Satake groupoid} (as we shall call it) is to use the following  simple observation of Omar Mohsen (which he has used to great effect in his own work \cite{Mohsen25CollegeDeFrance, Mohsen:blowup}): if $G$ is any group, and if $\{S\}$ is any collection of subgroups of $G$ that is closed under conjugation by elements of $G$, then the collection $\{ C\}$ of all cosets of all the subgroups in $\{ S\}$  carries the following structure of a groupoid over the object space $\{ S\}$: 
\[
\operatorname{source}(C) = C^{-1}C, \quad 
\operatorname{target}(C) = CC^{-1}
\quad \text{and} \quad 
C_1 \circ C_2 = C_1C_2.
\]
Now, the Satake compactification $\Satake$ of a real reductive group $G$ may be defined to be  the closure of the space of maximal compact subgroups of $G$ within the compact space of all closed subgroups of $G$.  So Mohsen's observation immediately applies, and   we obtain  a locally compact Hausdorff topological groupoid, as we shall explain  in Section~\ref{sec:Mohsen}.  This is our Satake groupoid $\Groupoid_{\Satake}$.

Although the above quickly characterizes the Satake compactification and the Satake groupoid, for computations it is much more convenient to construct both  compactification and  groupoid using Lie theory.  To do so we shall follow the approach of  Toshio Oshima \cite{Oshima78} to the Satake compactification. We shall present in Section~\ref{sec-oshima-space} a version of Oshima's work that we hope will be broadly accessible, and in Section~\ref{sec-oshima-groupoid} we shall describe the Satake groupoid from the Oshima point of view.

Oshima's construction makes it clear that the Satake compactification has finitely many $G$-orbits, which may be described using an Iwasawa decomposition $G{=}KAN$, as follows.  It is well-known in Lie theory that a  standard parabolic subgroup $P_I {=} M_I A_IN_I$ of $G$ may be asssociated to any subset $I$  of the set $\Sigma$ of simple restricted roots that is  obtained from the given Iwasawa decomposition, and that these are the only standard parabolic subgroups; see Section~\ref{sec-oshima-space} for more details. The $G$-orbits in $\Satake$ are also in bijection with the subsets $I\subseteq \Sigma$, with the orbit  $\Satake_I\subseteq \Satake$ being of  the type 
\[
\Satake_I \cong G \big / K_IA_I\overline N_I,
\]
where  $K_I{=}K\cap M_I$, and where $\overline{N_I}= \theta [ N_I]$. 
The Satake compactification, viewed as a collection of closed subgroups of $G$, consists of all the conjugates in $G$ of all the groups $H_I{=}K_I\overline N_I$. 

The orbit $\Satake_I$ is contained in the closure of the orbit $\Satake_J$ if and only if $I\subseteq J$.  It follows, for instance, that the orbit $\Satake_\Sigma$ is  open and dense in $\Satake$.  In addition, $K_\Sigma = K$, while the group  $A_{\Sigma}$ is the intersection of the center of $G$ with $A$, and $N_\Sigma$ is the trivial one-element group. The orbit  
\[
\Satake_\Sigma \cong G / KA_\Sigma ,
\]
therefore identifies, via the map $gKA_\Sigma\mapsto gKg^{-1}$, with the space of maximal compact subgroups of $G$.

As for the Satake groupoid,   each of the orbits $\Satake_I$ above is a locally closed, saturated subset of $\Satake$, and also a smooth embedded submanifold, and the reduction of the Satake groupoid $\Groupoid_{\Satake}$ to $\Satake_I$ has the form
\[
\Groupoid_{I} \cong G \big / K_I\overline N_I \underset{\,\,A_I} \times  G \big / K_I\overline N_I 
\]
(quotient by the diagonal right action of $A_I$).  For instance, the open and dense subgroupoid   $\Groupoid_{\Sigma}$ is   
\[
\Groupoid_{\Sigma} \cong G  / K  \underset{\,\,A_\Sigma} \times  G   / K .
\]
When $G$ has compact center, the group $A_\Sigma$ is trivial, and the above is simply the pair groupoid on $\Satake_\Sigma \cong G/K$.

Oshima actually constructed a smooth, closed $G$-manifold $\Oshima$ into which the variety of maximal compact subgroups of $G$ embeds as an open subset, while the Satake compactification embeds smoothly as a compact submanifold with corners.  We shall  in fact construct a Lie groupoid $\Groupoid_{\Oshima}$ over $\Oshima$ that we shall call the \emph{Oshima groupoid}. It is a quotient of the transformation groupoid for the action of $G$ on $\Oshima$. The Satake compactification,  viewed as a subset of $\Oshima$, is a saturated subset for the Oshima groupoid, and we shall  prove that the reduction of $\Groupoid_{\Oshima}$ to this subset is our Satake groupoid. 

Now, the  bounding submanifolds (of top dimension) of the Satake compactification within the Oshima space $\Oshima$ extend to smooth, closed hypersurfaces in $\Oshima$ that cross one another normally.  And to any  manifold, such as $\Oshima$, that is equipped with a finite family of normally crossing,  closed hypersufaces  there is  associated  a Lie groupoid  \cite{NistorWeinsteinXu99,Monthubert03}; the construction is an elaboration of ideas from the $b$-calculus of Richard Melrose \cite{MelroseICM90} (the Lie algebroid is Melrose's b-tangent bundle).  In the last section of this paper we shall present our third view of the Satake groupoid by identifing the Oshima group\-oid with this geometrically-defined \emph{$b$-groupoid}.

To conclude this introduction, it  is a pleasure to acknowledge with gratitude the kind assistance of Omar Mohsen, whose suggestion that his groupoid construction might be deployed in representation theory was the impetus for this work. It is a similar pleasure to acknowledge the interest of Alain Connes, who suggested to us a number of years ago that we ought to search for a groupoid that would explain Harish-Chandra's principle.

\section{The Satake compactification and the Satake groupoid}
\label{sec:Mohsen}

In this section, we shall present the topological  definition of the Satake compactification and groupoid of a real reductive group.

\subsection{Definition of the Satake compactification}
As mentioned in the introduction, the Satake compactification that we shall study is the closure of the set of all maximal compact subgroups of $G$  in  the space of all closed subgroups of $G$. The latter carries a natural compact Hausdorff topology; let us recall it.
    
\begin{definition}[\cite{Fell62}]
\label{def:Fell-topology}
Let $Z$ be a   topological space. We shall denote by  $\Closed(Z)$  the set of all \emph{nonempty} closed subsets of $Z$.  The \emph{Fell topology} on $\Closed(Z)$ is the topology generated by the base of open sets 
\begin{equation*}
  V\bigl (K ,\mathcal{U}\bigr) = \bigl \{\, X \in \Closed(Z) :  X\cap K {=} \emptyset \,  \text{ and } \, X\cap U \neq \emptyset \,\, \forall U\in \mathcal{U}\, \bigr \},
\end{equation*}
where $K$ ranges over all the compact subsets of $Z$,    and $\mathcal{U}$ ranges over all the  finite  families of open subsets of $Z$.
\end{definition}

\begin{theorem}[{\cite[Thm.\,1]{Fell62}}]
\label{thm-locally-compact-fell-topology}
If $Z$ is locally compact and second-countable, then the Fell topology on $\Closed(Z)$ is  locally  compact, Hausdorff and second-countable.
\end{theorem}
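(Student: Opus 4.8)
The plan is to deduce the statement from the classical fact that the space of all closed subsets of a \emph{compact} metric space, equipped with the Vietoris topology, is itself a compact metrizable space. I would realize $\Closed(Z)$ as a locally closed subspace of such a hyperspace by passing to the one-point compactification.

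First I would form $Z^{+}=Z\cup\{\infty\}$. Since $Z$ is locally compact, Hausdorff and second-countable it is $\sigma$-compact, so there is a countable neighbourhood base at $\infty$ consisting of the sets $Z^{+}\setminus K_{n}$ with $K_{n}$ compact; hence $Z^{+}$ is compact, Hausdorff and second-countable, and therefore metrizable by Urysohn's theorem. Let $\mathcal{F}$ denote the space of all closed subsets of $Z^{+}$ (including $\emptyset$, which is isolated) with the Vietoris topology. The classical hyperspace theorem then gives that $\mathcal{F}$ is compact, metrizable and second-countable; the one delicate point here, which I would either cite or verify directly, is that the Vietoris topology agrees with the topology of the Hausdorff metric.

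Next I would introduce the map $\Phi\colon\Closed(Z)\to\mathcal{F}$, $\Phi(X)=X\cup\{\infty\}$. This is well defined, since $Z^{+}\setminus(X\cup\{\infty\})=Z\setminus X$ is open, and it is injective with inverse $F\mapsto F\cap Z$. Its image is $\mathcal{K}\setminus\{\{\infty\}\}$, where $\mathcal{K}=\{\,F\in\mathcal{F}:\infty\in F\,\}$ and the point $\{\infty\}$ is excluded because we require $X$ nonempty. The set $\mathcal{K}$ is closed in $\mathcal{F}$, because its complement $\{\,F:F\subseteq Z\,\}$ is a basic Vietoris-open set; hence $\mathcal{K}$ is compact, and the image $\mathcal{K}\setminus\{\{\infty\}\}$, being the complement of a single point in a compact metric space, is open in $\mathcal{K}$ and therefore locally compact, Hausdorff and second-countable.

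The main step, and the one I expect to require the most care, is to check that $\Phi$ is a homeomorphism onto its image, so that these three properties transfer to $\Closed(Z)$. For this I would compare subbases. On the Fell side the subbasic sets are $\{\,X:X\cap K=\emptyset\,\}$ for $K$ compact and $\{\,X:X\cap U\neq\emptyset\,\}$ for $U$ open; on the Vietoris side they are $\{\,F:F\subseteq W\,\}$ and $\{\,F:F\cap W\neq\emptyset\,\}$ for $W$ open in $Z^{+}$. Using that $\infty\notin K$ and $\infty\notin U$ one finds $\Phi^{-1}\bigl(\{F:F\subseteq Z^{+}\setminus K\}\bigr)=\{X:X\cap K=\emptyset\}$ and $\Phi^{-1}\bigl(\{F:F\cap U\neq\emptyset\}\bigr)=\{X:X\cap U\neq\emptyset\}$, which exhibits every Fell subbasic set as $V\cap\operatorname{Image}$ for a Vietoris-open $V$ and so yields openness of $\Phi$ onto its image. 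Conversely, to see that $\Phi$ is continuous I would pull back the Vietoris subbasic sets: the crucial input is that every open $W\ni\infty$ in $Z^{+}$ has the form $Z^{+}\setminus K$ with $K$ compact in $Z$, so that $\{F:F\subseteq W\}$ pulls back to $\{X:X\cap K=\emptyset\}$, while for $W\subseteq Z$ the set $\{F:F\cap W\neq\emptyset\}$ pulls back to $\{X:X\cap W\neq\emptyset\}$, and for $\infty\in W$ it pulls back to the whole space. The heart of the matter is thus the way the compact subsets of $Z$ parametrize the neighbourhoods of $\infty$; once this bookkeeping is complete, $\Phi$ is a continuous open bijection onto $\mathcal{K}\setminus\{\{\infty\}\}$, hence a homeomorphism, and the theorem follows.
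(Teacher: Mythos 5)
Your proof is correct, but be aware that the paper itself offers no proof of this statement: Theorem \ref{thm-locally-compact-fell-topology} is simply quoted from Fell's 1962 paper, so the only comparison available is with Fell's original argument. Fell works in much greater generality: he proves compactness of the hyperspace of \emph{all} closed subsets (empty set included) of an \emph{arbitrary} topological space by a direct subbasis argument, and Hausdorffness whenever the space is locally compact, with no countability hypotheses anywhere; metrizability/second countability is then a separate add-on. Your route trades that generality for concreteness: using second countability of $Z$ you pass to the one-point compactification $Z^{+}$, invoke the classical compact-metric hyperspace theorem (the one delicate input you correctly flag, the agreement of the Vietoris and Hausdorff-metric topologies on a compactum, is standard and citable, e.g.\ Michael's paper on hyperspaces or Illanes--Nadler), and then carry the three properties back through the subbasis dictionary $K\leftrightarrow Z^{+}\setminus K$ for the ``miss'' sets and $U\subseteq Z$ for the ``hit'' sets. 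That bookkeeping, together with injectivity of $\Phi$ (which lets you pass from subbasic to basic sets when proving openness), is exactly right, as is the identification of the image with $\mathcal{K}\setminus\{\{\infty\}\}$. What your approach buys in the paper's setting is a self-contained proof in which Hausdorffness, local compactness, second countability, and even metrizability of $\Closed(Z)$ all come for free from the subspace embedding; what it loses is Fell's generality. It is also worth noting that your construction makes the paper's remark after the theorem literally visible: your compact set $\mathcal{K}$ \emph{is} the one-point compactification of $\Closed(Z)$ obtained by Fell when he includes the empty set, since under your inverse map $F\mapsto F\cap Z$ the excluded point $\{\infty\}$ corresponds precisely to $\emptyset$.
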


\begin{remark}
     In \cite{Fell62}, Fell includes the empty set as an element of his $\Closed(Z)$;  by doing so he obtains the one-point compactification of our $\Closed(Z)$.
\end{remark}

 The following definition and lemma offer  a full characterization of continuous maps into $\Closed(Z)$, analogous to the description of vector bundles via  classifying maps into Grassmannians. 

\begin{definition}
\label{def:bundle}
Let $B$ and $Z$ be topological spaces.  A \emph{continuous family of closed subsets of $Z$} over the space $B$ is  a closed subset 
$\mathbf{E}\subseteq B\times Z$ 
for which the  projection map $ \mathbf{E}\to B$ is an open mapping.
\end{definition}

\begin{lemma}
  \label{lem:bundle}
  Let $B$ and $Z$ be  locally compact topological spaces. 
  A  function 
  \(
  f :B \to  \Closed(Z)
  \)
  is continuous if and only if the subset 
  \[
  {\mathbf{E}} = \{\, (b,z)\in B{\times} Z :  z\in f(b)\, \}\subseteq B \times Z
  \]
is a continuous family of closed  subsets in the sense of  Definition~\textup{\ref{def:bundle}}. \qed
\end{lemma}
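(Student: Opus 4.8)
The plan is to funnel both directions of the equivalence through the two families of subsets of $B$
\[
D(K) = \{\, b \in B : f(b)\cap K = \emptyset\,\}, \qquad N(U) = \{\, b \in B : f(b)\cap U \neq\emptyset\,\},
\]
indexed respectively by the compact sets $K\subseteq Z$ and the open sets $U\subseteq Z$. The central bookkeeping observation is that the preimage under $f$ of a basic Fell-open set is
\[
f^{-1}\bigl(V(K,\mathcal{U})\bigr) = D(K)\cap\bigcap_{U\in\mathcal{U}}N(U).
\]
Since the sets $V(K,\mathcal{U})$ form a base for the Fell topology, $f$ is continuous precisely when every such finite intersection is open in $B$. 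Allowing the empty family $\mathcal{U}=\emptyset$ and the (compact) empty set $K=\emptyset$ shows that this is equivalent to the conjunction of two separate conditions: that $D(K)$ be open for every compact $K$, and that $N(U)$ be open for every open $U$. So I would reduce the lemma to proving that the first condition matches closedness of $\mathbf{E}$, and the second matches openness of the projection $\pi\colon\mathbf{E}\to B$.

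For the projection, I would use that $B\times Z$ carries the product base $\{O\times U\}$ with $O\subseteq B$ and $U\subseteq Z$ open, so that the sets $\mathbf{E}\cap(O\times U)$ form a base for the subspace topology on $\mathbf{E}$. Their images are $\pi\bigl(\mathbf{E}\cap(O\times U)\bigr) = O\cap N(U)$, and since a map is open iff it carries a base to open sets, $\pi$ is open iff $O\cap N(U)$ is open for all such $O,U$. Taking $O=B$ (and, conversely, intersecting with an arbitrary open $O$) shows this is exactly the requirement that $N(U)$ be open for every open $U\subseteq Z$.

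For closedness, I would observe that the complement of $D(K)$ is precisely the image under the projection $p\colon B\times Z\to B$ of the set $\mathbf{E}\cap(B\times K)$. If $\mathbf{E}$ is closed, then this set is closed in $B\times K$, and by Kuratowski's theorem (the projection along a compact factor is a closed map) its image is closed in $B$, whence $D(K)$ is open. Conversely, given a point $(b_0,z_0)\notin\mathbf{E}$, local compactness of $Z$ provides an open $W\ni z_0$ with compact closure $K=\overline W$ disjoint from the closed set $f(b_0)$; then $D(K)\times W$ is a neighbourhood of $(b_0,z_0)$ disjoint from $\mathbf{E}$, so $\mathbf{E}$ is closed. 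This is where I expect the only genuine subtlety to lie: the forward implication rests on the closedness of projections along compact fibres, and the reverse implication is the one place where the hypothesis that $Z$ is locally compact — used to separate $z_0$ from the closed set $f(b_0)$ by a compact neighbourhood — is actually needed. Everything else is the routine assembly of the two equivalences through the displayed decomposition of $f^{-1}(V(K,\mathcal{U}))$.
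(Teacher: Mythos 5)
Your proposal is correct, and in fact there is nothing in the paper to compare it against: the authors state this lemma with a terminal QED symbol and no argument, treating it as routine, so you have supplied exactly the proof that was omitted. Your skeleton is the natural one and every step checks out: the identity $f^{-1}\bigl(V(K,\mathcal{U})\bigr)=D(K)\cap\bigcap_{U\in\mathcal{U}}N(U)$, together with the degenerate choices $\mathcal{U}=\emptyset$ and $K=\emptyset$, correctly splits Fell-continuity into openness of all $D(K)$ and all $N(U)$; the computation $\pi\bigl(\mathbf{E}\cap(O\times U)\bigr)=O\cap N(U)$ correctly identifies openness of all $N(U)$ with openness of the projection; and the two halves of the closedness equivalence are right---Kuratowski's theorem (projection along a compact factor is closed) for the forward direction, and the compact-neighborhood separation for the converse. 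Two small remarks. First, as you note, the converse for closedness is the only place a topological hypothesis enters, and it needs local compactness of $Z$ in the strong form that every point has arbitrarily small compact neighborhoods; this is automatic when $Z$ is Hausdorff, which is the paper's actual setting ($Z=G$ a Lie group), but is worth stating since the lemma as written says only ``locally compact topological spaces.'' Second, your argument never uses local compactness of $B$ at all, so your proof establishes the lemma under a hypothesis slightly weaker than the one stated.
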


From now on we shall consider the Fell topology in the case of a second-countable, locally compact Hausdorff group $G$, since this is what is of interest to us (the second countability assumption is largely unnecessary, but it streamlines some arguments). 

\begin{lemma}[{\cite[Rmk.\,IV,\,p.474]{Fell62}}]
The subspace $\Subgp(G)\subseteq \Closed(G)$ of all closed subgroups  of $G$ is a compact metrizable subset of $\Closed(G)$.
\end{lemma}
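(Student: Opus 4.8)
The plan is to treat metrizability and compactness separately, obtaining metrizability almost for free and concentrating the real work on compactness. For metrizability, I would simply note that by Theorem~\ref{thm-locally-compact-fell-topology} the ambient space $\Closed(G)$ is second-countable, locally compact and Hausdorff, hence regular, hence metrizable by Urysohn's metrization theorem; since every subspace of a metrizable space is metrizable, so is $\Subgp(G)$. (Alternatively, once compactness has been established, one may invoke the fact that a compact, Hausdorff, second-countable space is metrizable.)

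For compactness, the strategy is to show that $\Subgp(G)$ is a \emph{closed} subset of Fell's compact space $\Closed(G)\cup\{\emptyset\}$, which is compact Hausdorff because, as recorded in the Remark following Theorem~\ref{thm-locally-compact-fell-topology}, it is the one-point compactification of the locally compact Hausdorff space $\Closed(G)$. As a subset of a compact Hausdorff space is compact exactly when it is closed, this will suffice. I would prove closedness by producing, for each point of the complement, a basic Fell-open neighborhood lying entirely in the complement. There are two kinds of points: the added point $\emptyset$, and the nonempty closed sets $X$ that fail to be subgroups.

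The point $\emptyset$ is immediate: every subgroup contains the identity $e$, so the basic Fell-open set $\{\,X : X\cap\{e\}=\emptyset\,\}$ (the set $V(\{e\},\mathcal{U})$ with $\mathcal{U}$ the empty family, a neighborhood of $\emptyset$ since $\{e\}$ is compact) is disjoint from $\Subgp(G)$. The substantive case is a nonempty closed $X$ that is not a subgroup. Here I would invoke the one-step subgroup test: a nonempty set $X$ is a subgroup if and only if $xy^{-1}\in X$ for all $x,y\in X$. Failure yields $x,y\in X$ with $z:=xy^{-1}\notin X$. Using that $G$ is locally compact Hausdorff and $X$ is closed, choose an open $W\ni z$ with compact closure $\overline W$ disjoint from $X$; then, by continuity of the map $(a,b)\mapsto ab^{-1}$, choose open sets $U_x\ni x$ and $U_y\ni y$ with $U_xU_y^{-1}\subseteq W$. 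The basic Fell-open set $V(\overline W,\{U_x,U_y\})$ contains $X$, and any $X'$ in it meets $U_x$ and $U_y$ in points $x',y'$ with $x'(y')^{-1}\in U_xU_y^{-1}\subseteq\overline W$; since $X'\cap\overline W=\emptyset$, we get $x'(y')^{-1}\notin X'$, so $X'$ is not a subgroup. Thus this neighborhood lies in the complement of $\Subgp(G)$ (and does not contain $\emptyset$, as it forces nonempty intersection with $U_x$), as required.

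I expect the main obstacle to be precisely this last construction, namely verifying that ``not a subgroup'' is an open condition in the Fell topology, since it is the only step that genuinely combines the continuity of the group operations, the local compactness of $G$, and the combinatorics of the basic open sets $V(K,\mathcal{U})$. Everything else, that is, the reduction to closedness in the one-point compactification, the disposal of $\emptyset$, and metrizability, is formal once this neighborhood has been produced.
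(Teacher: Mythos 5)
Your proof is correct and complete. There is in fact nothing in the paper to compare it against: the lemma is stated purely as a citation to Fell's Remark IV (with a further remark crediting Chabauty for an earlier version), and no argument is given, so your proposal supplies exactly what the paper delegates to the literature. Your route is the standard one behind Fell's remark: pass to Fell's full hyperspace $\Closed(G)\cup\{\emptyset\}$, which the paper's own remark identifies with the one-point compactification of $\Closed(G)$ and which is therefore compact Hausdorff, and show the complement of $\Subgp(G)$ there is open. Both cases are handled soundly: subgroups cannot accumulate at $\emptyset$ because each contains $e$, so the basic Fell set $V(\{e\},\emptyset)$ (open in Fell's hyperspace, hence in the compactification by that identification) separates $\emptyset$ from $\Subgp(G)$; and if a nonempty closed $X$ fails the one-step subgroup test at $x,y$, your neighborhood $V(\overline{W},\{U_x,U_y\})$ --- built from local compactness of $G$ and continuity of $(a,b)\mapsto ab^{-1}$ --- consists entirely of non-subgroups, since any $X'$ in it contains points $x'\in U_x$, $y'\in U_y$ with $x'(y')^{-1}\in U_xU_y^{-1}\subseteq \overline{W}$ while $X'\cap\overline{W}=\emptyset$. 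Two remarks: first, your compactness argument nowhere uses second countability, so it establishes Fell's statement in its natural generality (any locally compact Hausdorff group), with second countability entering only for metrizability, where either of your two routes is valid; second, the only point at which your argument leans on the paper rather than on first principles is the identification of the Fell topology on $\Closed(G)\cup\{\emptyset\}$ with the one-point compactification topology, which is precisely what the paper's remark following Theorem~2.2 asserts, so the dependence is legitimate.
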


 \begin{remark}
     A version of this result   was proved before \cite{Fell62}, by Chabauty \cite{Chabauty50}, using a different but equivalent definition of the topology on $\Subgp(G)$.
 \end{remark}

In the group case, the Fell topology has the following elementary  continuity property: 

\begin{lemma}
\label{lem:continuity}
    The maps
    \begin{align*}
      G \times \Closed(G) \to \Closed(G), \quad
      (g,X) \mapsto gX \\
      \Closed(G)\times G \to \Closed(G),
      \quad
      (X,g) \mapsto Xg
    \end{align*}
    are continuous. In particular, the adjoint action of $G$ on $\Subgp(G)$ is continuous. \qed
\end{lemma}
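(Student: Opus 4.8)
The plan is to verify continuity of the two multiplication maps by appeal to the characterization of continuous maps into $\Closed(G)$ given in Lemma~\ref{lem:bundle}, which conveniently absorbs the only delicate point. Write $\mu\colon G\times\Closed(G)\to\Closed(G)$, $\mu(g,X)=gX$, and set $B=G\times\Closed(G)$, which is locally compact by Theorem~\ref{thm-locally-compact-fell-topology}. By Lemma~\ref{lem:bundle} it suffices to show that
\[
\mathbf{E}_\mu=\bigl\{\,((g,X),z)\in B\times G : z\in gX\,\bigr\}=\bigl\{\,((g,X),z) : g^{-1}z\in X\,\bigr\}
\]
is a continuous family of closed subsets, that is, that $\mathbf{E}_\mu$ is closed in $B\times G$ and that the projection $\mathbf{E}_\mu\to B$ is open.

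Both properties I would deduce from the corresponding facts for the \emph{evaluation set} $\mathbf{E}_0=\{(X,z)\in\Closed(G)\times G : z\in X\}$. Applying Lemma~\ref{lem:bundle} to the identity map of $\Closed(G)$ shows at once that $\mathbf{E}_0$ is closed and that its projection $\mathbf{E}_0\to\Closed(G)$ is open. Now the continuous map $\Psi\colon B\times G\to\Closed(G)\times G$, $((g,X),z)\mapsto(X,g^{-1}z)$, satisfies $\mathbf{E}_\mu=\Psi^{-1}(\mathbf{E}_0)$, so $\mathbf{E}_\mu$ is closed. For the openness of the projection I would use the fibre-preserving homeomorphism $\widetilde\Psi\colon B\times G\to B\times G$, $((g,X),z)\mapsto((g,X),g^{-1}z)$, which covers the identity on $B$ and carries $\mathbf{E}_\mu$ onto $\{((g,X),w):w\in X\}$. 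This latter set is identified with $G\times\mathbf{E}_0$, and under this identification the projection to $B$ becomes $\mathrm{id}_G\times(\mathbf{E}_0\to\Closed(G))$, a product of open maps and hence open; transporting back through $\widetilde\Psi$ shows $\mathbf{E}_\mu\to B$ is open. This proves $\mu$ continuous.

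The map $(X,g)\mapsto Xg$ is handled identically, replacing $\Psi$ by $((X,g),z)\mapsto(X,zg^{-1})$. Finally, the adjoint action $(g,S)\mapsto gSg^{-1}$ is the composite of left multiplication, the continuous inversion $g\mapsto g^{-1}$ of $G$, and right multiplication; since conjugation preserves the closed (indeed compact) invariant subspace $\Subgp(G)\subseteq\Closed(G)$, its restriction there is continuous as well.

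I expect the genuine obstacle to be the openness of the projection $\mathbf{E}_\mu\to B$ --- equivalently, were one to argue directly from the subbasis of the Fell topology, the continuity of the preimage of a ``misses the compact set $K$'' condition $\{X : X\cap K=\emptyset\}$. The ``meets the open set $U$'' conditions follow straightforwardly from joint continuity of multiplication, but the miss condition requires, for $X_0$ with $g_0X_0\cap K=\emptyset$, first separating the compact set $g_0^{-1}K$ from the disjoint closed set $X_0$ by an open set $O$ of compact closure, and then a tube-lemma argument producing a neighbourhood $V_0\ni g_0$ with $V_0^{-1}K\subseteq O$. Routing the proof through Lemma~\ref{lem:bundle} is attractive precisely because it packages this compactness argument once and for all inside the openness statement for $\mathbf{E}_0$.
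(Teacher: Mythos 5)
Your proof is correct, but there is nothing in the paper to compare it against: Lemma~\ref{lem:continuity} is stated with a tombstone, i.e.\ the authors regard it as an elementary property of the Fell topology and give no argument. Your route --- applying Lemma~\ref{lem:bundle} to $\mu(g,X)=gX$, obtaining closedness of $\mathbf{E}_\mu$ by pulling the evaluation set $\mathbf{E}_0=\{(X,z):z\in X\}$ back along $((g,X),z)\mapsto(X,g^{-1}z)$, and obtaining openness of the projection by transporting through the fibrewise homeomorphism $((g,X),z)\mapsto((g,X),g^{-1}z)$, which identifies $\mathbf{E}_\mu$ over $B$ with $G\times\mathbf{E}_0$ --- is sound, and it is in fact the route the paper itself gestures at: Example~\ref{ex-mapping-homogeneous-space-to-chabauty-space} notes that continuity of $gS\mapsto gSg^{-1}$ ``also follows from Lemma~\ref{lem:bundle}.'' You also correctly check the hypotheses (local compactness of $G\times\Closed(G)$ via Theorem~\ref{thm-locally-compact-fell-topology}) and correctly reduce the adjoint action to a composite of the two translation maps and inversion, restricted to the invariant subspace $\Subgp(G)$. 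The one caveat worth stating is that Lemma~\ref{lem:bundle} is itself left unproved in the paper, so your argument relocates rather than eliminates the genuine analytic content; as you observe in your final paragraph, a self-contained verification would still require either a proof of Lemma~\ref{lem:bundle} or a direct argument on the subbasic sets $V(K,\mathcal{U})$, where the ``meets $U$'' conditions are immediate from joint continuity of multiplication and the ``misses $K$'' condition needs local compactness plus a tube-lemma argument, exactly as you sketch. Modulo that standing lemma of the paper, your proof is complete.
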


\begin{example}
\label{ex-mapping-homogeneous-space-to-chabauty-space}
It follows from the lemma that if $S$ is a closed subgroup of $G$, then the morphism 
\[
\begin{gathered}
G/S\longrightarrow \Subgp(G)
\\
gS \longmapsto gSg^{-1}
\end{gathered}
\]
is continuous. This also follows from Lemma~\ref{lem:bundle}.
\end{example}

Let us now specialize to the case of interest to us in this paper.

\begin{definition}
 Let $G$ be a real reductive group (in Section~\ref{sec-parabolic-subgroups} we shall say more about our precise assumptions on $G$; those details are not important in this section). 
 To be consistent with notation that we shall introduce in Section~\ref{sec-oshima-space},  we shall denote by  $\Satake_\Sigma$ the set of all maximal compact subgroups of $G$.
\end{definition}

Any two maximal compact subgroups of $G$ are conjugate,  so $\Satake_\Sigma$ is homogeneous space of $G$, and it therefore carries a natural topology, indeed a $G$-equivariant smooth manifold structure.  As we noted in Example~\ref{ex-mapping-homogeneous-space-to-chabauty-space}, the  inclusion map of $\Satake_\Sigma$ into $\Subgp(G)$
is continuous. It will emerge later that this map is a homeomorphism onto its image; see Section~\ref{sec-oshima-space}.   

\begin{definition}
\label{def:Satake}
The (maximal) \emph{Satake compactification}  of the space $\Satake_\Sigma$ of  maximal compact subgroups of $G$   is the closure 
\[
\Satake = \overline{\Satake_\Sigma}
\]
in the compact Hausdorff space  $\Subgp(G)$.
\end{definition}

This is not the  original definition of Satake in \cite{Satake60}, but it is equivalent; see  for instance  \cite[Thm.\,9.18]{GuivarchJiTaylor}.    It is not obvious at this point that $\Satake_\Sigma$ is an open subset of $\Satake$, but this is indeed so; see Section~\ref{sec-oshima-space} again.

\subsection{Definition of the Satake groupoid}
The Satake compactification is a conj\-ugation-invariant family of subgroups of $G$,  and  so we may apply Mohsen's groupoid construction that we described in the introduction.  We recall the construction.

In general, if $\Satake$ is any family of subgroups of a group $G$, closed under conjugation, then every left coset of a subgroup in $\Satake$ is also a right coset of a subgroup in $\Satake$.  
The \emph{coset groupoid} $\Groupoid_\Satake\rightrightarrows\Satake$ is the set $\Groupoid_\Satake$ of all cosets of all subgroups in $\Satake$ equipped with source and target maps
\[
   \operatorname{source}(gS) = S, \quad \operatorname{target}(Sg) = S,\qquad (g\in G,\ S\in\Satake),
\]
and with setwise product and inverse.

\begin{definition}
    \label{def:Satake_groupoid}
    Let $G$ be a real reductive group. The \emph{Satake groupoid} $\Groupoid_\Satake \rightrightarrows \Satake$ associated to $G$ is the coset groupoid of the Satake compactification.
\end{definition}

By definition, an element of $\Groupoid_{\Satake}$ is a closed subset of $G$.  Thus $\Groupoid_{\Satake}$ is a subset of $\Closed(G)$, and we shall be viewing it as such in this section.

The Satake groupoid shares its most basic features with the coset groupoid of any closed, conjugation-invariant subset of $\Subgp(G)$, for any locally compact $G$ (although for simplicity we shall focus on second-countable groups).  
 
\begin{lemma}
\label{lem-coset-groupoid-is-closed-subset}
 Let $G$ be a second-countable, locally compact Hausdorff topological group, and let  $\Satake$ be  a closed, conjugation-invariant subset of the space $\Subgp(G)$ of all closed subgroups of $G$.  
The coset groupoid   $\Groupoid_{\Satake}$ is a closed subset of $\Closed(G)$.  

More precisely, if $(C_n)$ is a sequence of cosets in $\Groupoid_\Satake$ which converges to some closed subset $C\in\Closed(G)$, then we can find some convergent sequences $g_n\to g \in G$ and $S_n\to S\in\Subgp(G)$ such that $C_n=g_nS_n$ for all $n$ and $C=gS$.
\end{lemma}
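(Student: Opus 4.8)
The plan is to show directly that any Fell limit of cosets is again a coset, which (since $\Closed(G)$ is metrizable, being locally compact, Hausdorff and second countable by Theorem~\ref{thm-locally-compact-fell-topology}) is equivalent to the closedness of $\Groupoid_{\Satake}$ and is exactly the ``more precise'' sequential statement. The naive approach would be to recover the subgroup of which $C_n$ is a coset as $C_n^{-1}C_n$ and to pass to the limit in this product; but products of closed sets are not well-behaved under the Fell topology, so this is best avoided. The key idea instead is to choose the coset representatives \emph{inside} the limiting coset, so that recovering the subgroup only ever requires translation by a \emph{single} group element, for which continuity is guaranteed by Lemma~\ref{lem:continuity}.

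To carry this out, I would first record the one inner-approximation property of Fell convergence that is needed: if $C_n\to C$ and $U\subseteq G$ is open with $C\cap U\neq\emptyset$, then $C_n\cap U\neq\emptyset$ for all large $n$. This is immediate from Definition~\ref{def:Fell-topology}, since $C\in V(\emptyset,\{U\})$ and the latter is open. The set $C$ is nonempty, being an element of $\Closed(G)$, so I fix a point $g\in C$; using that $G$ is second countable, hence first countable, I take a decreasing countable neighborhood basis of $g$ and, by a standard choice along this basis, extract points $g_n\in C_n$ with $g_n\to g$. This extraction is the only genuinely topological step, and is where I expect whatever mild difficulty there is to reside; everything afterwards is formal.

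Next, each $C_n$ is a coset of some subgroup $S_n\in\Satake$, and since $g_n\in C_n$ I may use $g_n$ as representative, so that $C_n=g_nS_n$ and hence $S_n=g_n^{-1}C_n$. Inversion is continuous on $G$, so $g_n^{-1}\to g^{-1}$, and the continuity of $(h,X)\mapsto hX$ on $G\times\Closed(G)$ from Lemma~\ref{lem:continuity} gives
\[
S_n=g_n^{-1}C_n\longrightarrow g^{-1}C\quad\text{in }\Closed(G).
\]
Set $S:=g^{-1}C$. Since each $S_n$ lies in $\Subgp(G)$, which is a compact and therefore closed subset of $\Closed(G)$, the limit $S$ is again a closed subgroup of $G$; and because $\Satake$ is closed in $\Subgp(G)$ while each $S_n\in\Satake$, we obtain $S\in\Satake$. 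Finally, left-multiplying $S=g^{-1}C$ by $g$ yields $C=gS$, exhibiting $C$ as a coset of the subgroup $S\in\Satake$, so $C\in\Groupoid_{\Satake}$. Together with $g_n\to g$, $S_n\to S$ and $C_n=g_nS_n$, this is precisely the claimed conclusion, and the closedness of $\Groupoid_{\Satake}$ in $\Closed(G)$ follows.
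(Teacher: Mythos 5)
Your proposal is correct and follows essentially the same route as the paper's proof: extract representatives $g_n\in C_n$ converging to a chosen $g\in C$ via a nested neighborhood basis and the Fell inner-approximation property, then use the joint continuity of $(h,X)\mapsto hX$ from Lemma~\ref{lem:continuity} to get $S_n=g_n^{-1}C_n\to g^{-1}C$, which lies in $\Satake$ by closedness. Your application of Lemma~\ref{lem:continuity} directly to the pair $(g_n^{-1},C_n)$ is in fact slightly more streamlined than the paper's intermediate rewriting $\lim S_n=\lim g^{-1}g_nS_n$, but the substance is identical.
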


\begin{proof}
Suppose that some sequence  $(C_n)$ in $\Groupoid_{\Satake}$ converges to $C\in\Closed(G)$.  Pick $g\in C$ and let $V_1 \supset V_2 \supset \cdots$ be a nested neighborhood basis for $g\in G$. Since $C \in V(\emptyset ,\{V_k\})$  (notation from Definition~\ref{def:Fell-topology}), it follows that for every $k$ there is an $N_k$, without loss of generality greater than $N_{k-1}$, such that $C_n \in V(\emptyset ,\{V_k\})$ for all $n\geq N_k$.  Now, for every $N_k\leq n < N_{k+1}$, pick $g_n \in C_n \cap V_k$.  Then $g_n\to g$.  Since $g_n\in C_n$ we can write $C_n = g_nS_n$ for some $S_n\in\Satake$, and we get, thanks to Lemma \ref{lem:continuity},
\[
  \lim_{n\to \infty} S_n = \lim_{n\to\infty} g^{-1}g_nS_n = \lim_{n\to\infty} g^{-1}C_n = g^{-1}C.
\]
Since $\Satake$ is closed, we deduce that $g^{-1}C = S$ for some closed subgroup $S\in \Satake$.  The result follows.
\end{proof}

\begin{theorem}
\label{thm-topological-structure-on-coset-groupoid}
Let $G$ be a second countable, locally compact Hausdorff topological group, and let  $\Satake$ be any closed, conjugation-invariant subset of  $\Subgp(G)$.  
When equipped with the Fell topology it inherits from $\Closed(G)$, the coset groupoid $\Groupoid_{\Satake}$ is a locally compact Hausdorff topological groupoid with open range and source maps. 
\end{theorem}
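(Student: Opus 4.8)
The plan is to verify the three features of the statement in turn: that $\Groupoid_{\Satake}$ is locally compact and Hausdorff, that the four structure maps (inverse, source, target, multiplication) are continuous, and that source and range are open. Since $G$ is second countable, Theorem~\ref{thm-locally-compact-fell-topology} makes $\Closed(G)$ second countable, locally compact and Hausdorff, hence metrizable; this lets me check all continuity claims on sequences. The topological properties come for free: by Lemma~\ref{lem-coset-groupoid-is-closed-subset} the coset groupoid $\Groupoid_{\Satake}$ is a \emph{closed} subset of $\Closed(G)$, and a closed subspace of a locally compact, Hausdorff, second countable space inherits all three properties.

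For the structure maps I would proceed as follows. The inversion $C\mapsto C^{-1}$ is the restriction to $\Groupoid_{\Satake}$ of the self-homeomorphism of $\Closed(G)$ induced by $g\mapsto g^{-1}$ on $G$ (a homeomorphism of $G$ induces one of $\Closed(G)$, since it carries the Fell subbasis of Definition~\ref{def:Fell-topology} to itself), and it preserves $\Groupoid_{\Satake}$ because $(gS)^{-1}=Sg^{-1}=g^{-1}(gSg^{-1})$ is again a coset of a subgroup in $\Satake$ by conjugation-invariance. For the source map I would use Lemma~\ref{lem-coset-groupoid-is-closed-subset} directly: if $C_n\to C$ in $\Groupoid_{\Satake}$, choose representations $C_n=g_nS_n$ and $C=gS$ with $g_n\to g$ and $S_n\to S$; since $\operatorname{source}(C)=C^{-1}C$ equals the subgroup $S$ independently of the chosen representation, this gives $\operatorname{source}(C_n)=S_n\to S=\operatorname{source}(C)$. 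The range map then equals $\operatorname{source}$ composed with inversion, because $CC^{-1}=(C^{-1})^{-1}(C^{-1})$, so it is continuous as a composite of continuous maps.

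The hard part will be continuity of multiplication, and this is really the only obstacle, because setwise multiplication $\Closed(G)\times\Closed(G)\to\Closed(G)$ is \emph{not} continuous in general. The key observation is that on composable pairs the product collapses to a single translate, which is governed by Lemma~\ref{lem:continuity}. Concretely, suppose $(C_1^{(n)},C_2^{(n)})\to(C_1,C_2)$ in the composable set $\Groupoid_{\Satake}^{(2)}$, and write $C_1^{(n)}=g_nS_n$, $C_2^{(n)}=h_nT_n$ with $g_n\to g$, $S_n\to S$, $h_n\to h$, $T_n\to T$ via Lemma~\ref{lem-coset-groupoid-is-closed-subset}. The composability condition $\operatorname{source}(C_1^{(n)})=\operatorname{target}(C_2^{(n)})$ reads $S_n=h_nT_nh_n^{-1}$, and a direct set computation then yields $C_1^{(n)}C_2^{(n)}=g_nh_nT_nh_n^{-1}h_nT_n=(g_nh_n)T_n$. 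Now continuity of multiplication in $G$ gives $g_nh_n\to gh$, and Lemma~\ref{lem:continuity} gives $(g_nh_n)T_n\to (gh)T=C_1C_2$; as everything is metrizable, this establishes continuity.

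Finally I would deduce openness of source and range from the continuous surjection $q\colon G\times\Satake\to\Groupoid_{\Satake}$, $(g,S)\mapsto gS$, whose continuity is Lemma~\ref{lem:continuity}. For any open $U\subseteq\Groupoid_{\Satake}$ I would check the identity $\operatorname{source}(U)=\pi_2\bigl(q^{-1}(U)\bigr)$, where $\pi_2\colon G\times\Satake\to\Satake$ is the second projection: both containments follow from $\operatorname{source}(gS)=S=\pi_2(g,S)$ together with surjectivity of $q$. Since $q^{-1}(U)$ is open and the projection $\pi_2$ is always an open map, $\operatorname{source}(U)$ is open, so source is an open map. The range map is $\operatorname{source}$ composed with the inversion homeomorphism and is therefore open as well.
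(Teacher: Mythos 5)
Your proof is correct, and its core runs parallel to the paper's: both rest on Lemma~\ref{lem-coset-groupoid-is-closed-subset} to produce convergent representations $C_n=g_nS_n$ with $g_n\to g$, $S_n\to S$, and both reduce continuity of composition to the collapse of a setwise product of composable cosets to a single translate, handled by Lemma~\ref{lem:continuity} (your computation $C_1^{(n)}C_2^{(n)}=(g_nh_n)T_n$ is exactly the paper's $g_nS_n\circ S_nh_n=g_nS_nh_n$ in a different parametrization of the second coset). The genuine divergence is in the openness of the source map. The paper argues directly on the Fell basis: given a basic open set $V(K,\mathcal{U})$ and $gS\in V(K,\mathcal{U})$, it translates to show
\[
\Satake \cap V(g^{-1}K, g^{-1}\mathcal{U})
\subseteq \operatorname{source}\bigl [ \Groupoid_\Satake \cap V(K,\mathcal{U})\bigr ],
\]
so that each point of the image of a basic open set is interior. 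You instead factor the problem through the continuous surjection $q\colon G\times\Satake\to\Groupoid_\Satake$, $(g,S)\mapsto gS$, via the identity $\operatorname{source}(U)=\pi_2\bigl(q^{-1}(U)\bigr)$ and openness of the projection $\pi_2$. Your route never touches the Fell basis and is arguably cleaner and more portable (it works verbatim whenever one has a continuous surjective parametrization of the morphism space respecting the source fibration); the paper's route is more concrete, exhibiting explicit basic neighborhoods inside the image. Two further small points in your favor: you record explicitly why $\Groupoid_\Satake$ is locally compact and Hausdorff (closedness in $\Closed(G)$ plus Theorem~\ref{thm-locally-compact-fell-topology}), which the paper leaves implicit, and your direct verification that $\operatorname{source}(C_n)=S_n\to S$ sidesteps the paper's slightly more roundabout remark that source and target are composites of composition and inversion. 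One step you state without proof, namely $(gh)T=C_1C_2$, deserves the one-line justification that composability of the limit pair gives $S=hTh^{-1}$, whence $gShT=ghT$; but this is immediate from the continuity of source and target already established.
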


\begin{proof}
The set-wise inverse $C\mapsto C^{-1}$ is Fell-continuous on $\Closed (G)$ because the inverse operation is a homeomorphism from $G$ to itself.

Every  sequence of composable pairs of morphisms in $\Groupoid_{\Satake}$ has the form
$ ((g_n S_n ,S_n  h _n))_{n=1}^\infty$, 
with $g_n,h_n\in G$ and $S_n\in \Satake$.
If this sequence is convergent, then by Lemma \ref{lem-coset-groupoid-is-closed-subset} we may assume that $g_n \to g$, $h_n \to h$ and $S_n \to S$, for some $g,h\in G$ and some $S\in\Satake$.  Therefore 
\[
\lim_{n\to \infty} g_n S_n \circ S_n  h _n
=
\lim_{n\to \infty} g_nS_nh_n
= gSh
= \lim_{n\to \infty} g_n S_n \circ \lim_{n\to \infty} S_n  h _n,
\]
so that composition is continuous. Since the  source and target maps, viewed as maps to the unit elements in $\Groupoid_{\Satake}$, are compositions of composition and inverse maps, they are continuous, too.  

It remains to prove that the source and target maps are open. Since the target map is the composition of the inverse map with the source map, it suffices to prove the source map is open.  
For this, we need to prove that if  $V(K,\mathcal{U} )$ is a basic open set, as in Definition~\ref{def:Fell-topology} (where $K$ is any compact subset of $G$, and not necessarily a subgroup), then the image of the set $\Groupoid_\Satake \cap V(K,\mathcal{U})$
under the source map is an open set in $\Satake$.

Let $gS\in V(K,\mathcal{U} )$. Then $ S \in V(g^{-1}K,g^{-1}\mathcal{U})$, and moreover 
\[
S' \in \Satake \cap V(g^{-1}K,g^{-1}\mathcal{U})
\quad \Rightarrow \quad gS' \in V(K,\mathcal{U}),
\]
This shows that 
\[
\Satake \cap V(g^{-1}K, g^{-1}\mathcal{U})
\subseteq \operatorname{source}\bigl [ \Groupoid_\Satake \cap V(K,\mathcal{U})\bigr ] ,
\]
and therefore that $S$ lies in the interior of the image of $\Groupoid_\Satake \cap V(K,\mathcal{U})$ under the source map, as required.
\end{proof}

\section{The Oshima space} 
\label{sec-oshima-space}

In this section we shall  present a small   variation of Oshima's construction of a smooth, closed $G$-manifold that includes the Satake compactification as a full-dimensional compact submanifold with corners. Oshima's manifold is in fact the union of a finite set of  copies of the Satake compactification, joined together along  various common boundary faces.

\subsection{Background information on reductive groups and parabolic subgroups} 
\label{sec-parabolic-subgroups}

We shall work with the real reductive groups as defined by Knapp in \cite[Sec.VII.2]{KnappBeyond}. This includes Harish-Chandra's class \cite[Sec.\,3]{HarishChandra75}, and agrees with it   for linear groups \cite[pp.\,447-449]{KnappBeyond}.

Let $G$ be a real reductive group, as above, and fix  the following: 
\begin{enumerate}[\rm (i)]

\item A Cartan involution $\theta \colon G \to G$ (we shall use the same symbol for the involution on the Lie algebra). 

\item A nondegenerate, $\theta$-invariant symmetric bilinear form on $\lie{g}$ that is negative-definite  on the $+1$  eigenspace of $\theta$ and positive-definite on the $-1$ eigen\-space.

\item An Iwasawa decomposition $G {=} KAN$ \cite[Prop.\,7.31]{KnappBeyond}  that is compatible with $\theta$, and its associated  minimal parabolic subgroup $P_{\min}=MAN$, where $M$ is the centralizer of $\mathfrak{a}$ in $K$.

\end{enumerate}
Recall that the \emph{standard parabolic subalgebras} of $\mathfrak{g}$ for this choice of minimal parabolic subgroup are the Lie subalgebras of $\lie{g}$ that contain $\lie{p}_{\min}$.  They are in bijection with the subsets of the set $\Sigma\subseteq \Delta^+(\lie{g}, \lie{a})$ of simple restricted roots in the following way: given $I\subseteq \Sigma$ form
\begin{align}
\label{eq-def-a-i}
\lie{a}_I 
    & = \bigcap \bigl \{ \, \operatorname{kernel}(\alpha) : \alpha \in I\,\bigr\},
    \\
\label{eq-def-m-i}
\lie{m}_I 
    & = \lie{m}\oplus \lie{a}_I^\perp \oplus \bigoplus_{\substack{\gamma\in\Delta(\mathfrak{g},\mathfrak{a})\\ \gamma[\lie{a}_I] =0}} \lie{g}_\gamma ,
    \\
\label{eq-def-n-i}
\lie{n}_I 
    & =  \bigoplus_{\substack{\gamma\in\Delta^+(\mathfrak{g},\mathfrak{a})\\ \gamma[\lie{a}_I] \ne 0}} \lie{g}_\gamma ,
\end{align}
with $\mathfrak{g}_\gamma$ the restricted roots spaces of $(\mathfrak{g},\mathfrak{a})$  (see for instance \cite[p.\,455]{KnappBeyond}) and $\lie{a}_I^\perp$ the orthogonal complement of $\lie{a}_I$ in $\lie{a}$ using the form in (ii) above.  All of the above are Lie  subalgebras of $\lie{g}$;  the algebras  $\lie{m}_I$ and $\lie{a}_I$   commute with one another;   and both normalize $\lie{n}_I$.
 The corresponding standard parabolic subalgebra is  
\[
\lie{p}_I = \lie{m}_I \oplus \lie{a}_I\oplus \lie{n}_I.
\]
For all the above, see \cite[Sec.\,VII.7]{KnappBeyond}.

Now form the Lie groups
\[
\begin{aligned}
L_I 
    & = \text{Centralizer of $\lie{a}_I$ in $G$}
    \\
N_I 
    & = \text{Connected Lie subgroup of $G$ with Lie algebra $\lie{n}_I$} .
\end{aligned}
\]
These are closed subgroups of $G$. Moreover  $L_I$ normalizes $N_I$. The product 
\[
P_I = L_IN_I
\]
is also a closed subgroup of $G$; it is  the \emph{standard parabolic subgroup} associated to $I$.  The group $L_I$  decomposes as a Cartesian product of Lie groups 
\begin{equation}
\label{eq-split-decomp-of-l-i}
L_I = M_I A_I ,
\end{equation}
where $A_I$ is the connected Lie subgroup of $G$ with Lie algebra $\lie{a}_I$, and $M_I$ is the mutual kernel of all Lie group morphisms from $L_I$ to $\R^\times _+$; its Lie algebra is $\lie{m}_I$.  See \cite[Sec.\,VII.7]{KnappBeyond} again.

\begin{theorem}[See {\cite[Thm.\,3]{Moore64}} or {\cite[Prop.\,7.83]{KnappBeyond}}]
\label{thm-normalizer-thm-of-moore}
If $I$ is any subset of $\Sigma$, then   
$
P_I =\operatorname{Normalizer}_G(\mathfrak{p}_I) = \operatorname{Normalizer}_G(\mathfrak{n}_I)$.
\end{theorem}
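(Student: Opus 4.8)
The plan is to prove both equalities simultaneously by running around the cycle of inclusions
\[
P_I \;\subseteq\; \operatorname{Normalizer}_G(\mathfrak{n}_I) \;\subseteq\; \operatorname{Normalizer}_G(\mathfrak{p}_I) \;\subseteq\; P_I ,
\]
where the first three are formal consequences of the structure of the two subalgebras, and the last is the substantial step, which requires control of the component group of the normalizer.

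For the two middle inclusions I would work entirely inside $\mathfrak{g}$. Since $\mathfrak{n}_I$ is the nilradical of $\mathfrak{p}_I$, it is a characteristic ideal, so any $g\in G$ with $\operatorname{Ad}(g)\mathfrak{p}_I=\mathfrak{p}_I$ induces an automorphism of $\mathfrak{p}_I$ that must preserve its nilradical; this gives $\operatorname{Normalizer}_G(\mathfrak{p}_I)\subseteq\operatorname{Normalizer}_G(\mathfrak{n}_I)$. Conversely one has the standard identity $\mathfrak{n}_{\mathfrak{g}}(\mathfrak{n}_I)=\mathfrak{p}_I$: choosing a generic $H\in\mathfrak{a}_I$ grades $\mathfrak{g}$ into the $\operatorname{ad}(H)$-eigenspaces $\overline{\mathfrak{n}}_I\oplus\mathfrak{l}_I\oplus\mathfrak{n}_I$ (negative, zero, positive eigenvalues), and a one-line weight computation shows that an element with a nonzero $\overline{\mathfrak{n}}_I$-component cannot normalize $\mathfrak{n}_I$. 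Because $\operatorname{Ad}(g)$ is an automorphism of $\mathfrak{g}$ it carries $\mathfrak{n}_{\mathfrak{g}}(\mathfrak{n}_I)$ to $\mathfrak{n}_{\mathfrak{g}}(\operatorname{Ad}(g)\mathfrak{n}_I)$, so preserving $\mathfrak{n}_I$ forces preserving $\mathfrak{p}_I$. Hence the two normalizers coincide; write $Q$ for the common group. Finally $P_I\subseteq Q$, since $L_I$ normalizes $N_I$ (recorded above) and hence $\mathfrak{n}_I$, while $N_I$ normalizes its own Lie algebra, so $P_I=L_IN_I$ normalizes $\mathfrak{n}_I$.

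For the hard inclusion $Q\subseteq P_I$ I would first pass to identity components. The same generic-$H$ argument yields $\mathfrak{n}_{\mathfrak{g}}(\mathfrak{p}_I)=\mathfrak{p}_I$, so $\operatorname{Lie}(Q)=\mathfrak{p}_I=\operatorname{Lie}(P_I)$ and $Q^0=P_I^0$; since $P_I\subseteq Q$, the subgroup $P_I$ is open and closed in $Q$, and it remains only to meet every component. Using the Iwasawa decomposition $G=KAN\subseteq KP_I$ I would write an arbitrary $q\in Q$ as $q=kp$ with $k\in K$ and $p\in P_I\subseteq Q$, so that $k=qp^{-1}\in K\cap Q$; everything thus reduces to showing $K\cap Q\subseteq P_I$. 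Here the Cartan involution is the essential tool. Writing $\mathfrak{g}=\mathfrak{k}\oplus\mathfrak{s}$ for the Cartan decomposition (with $\mathfrak{s}$ the $-1$ eigenspace of $\theta$), for $k\in K\cap Q$ the relation $\theta(k)=k$ makes $\operatorname{Ad}(k)$ commute with $\theta$; since $\theta(\mathfrak{p}_I)=\overline{\mathfrak{p}}_I$ is the opposite parabolic, $\operatorname{Ad}(k)$ preserves not only $\mathfrak{p}_I$ and $\overline{\mathfrak{p}}_I$ but their intersection $\mathfrak{l}_I$, their nilradicals $\mathfrak{n}_I$ and $\overline{\mathfrak{n}}_I$, and the subspace $\mathfrak{a}_I=\mathfrak{z}(\mathfrak{l}_I)\cap\mathfrak{s}$.

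It remains to upgrade ``$\operatorname{Ad}(k)$ preserves $\mathfrak{a}_I$'' to ``$\operatorname{Ad}(k)$ centralizes $\mathfrak{a}_I$'', and this is the main obstacle. I would compare Cartan subspaces: $\mathfrak{a}$ and $\operatorname{Ad}(k)\mathfrak{a}$ are both maximal abelian in $\mathfrak{s}\cap\mathfrak{l}_I$, hence conjugate by some $k_0\in K\cap L_I$, so $w:=k_0k$ normalizes $\mathfrak{a}$ while still lying in $Q$. Thus $w$ represents an element of the Weyl group $W=\operatorname{Normalizer}_K(\mathfrak{a})/\operatorname{Centralizer}_K(\mathfrak{a})$ that stabilizes $\mathfrak{n}_I$, and the standard fact $\{w\in W: w\mathfrak{n}_I=\mathfrak{n}_I\}=W_I:=\langle s_\alpha:\alpha\in I\rangle$ applies (correcting $w$ by an element of $W_I$ produces an element preserving the full positive system, which is therefore trivial on $\mathfrak{a}$). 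Each $s_\alpha$ with $\alpha\in I$ fixes $\ker\alpha\supseteq\mathfrak{a}_I$ pointwise, so $W_I$ acts trivially on $\mathfrak{a}_I$; as $k_0\in L_I$ already centralizes $\mathfrak{a}_I$, we get $\operatorname{Ad}(k)|_{\mathfrak{a}_I}=\mathrm{id}$, whence $k\in L_I$ and therefore $k\in K\cap L_I=K_I\subseteq P_I$. This closes the cycle. The delicate point throughout is exactly this final step: the inclusion $P_I\subseteq Q$ is immediate, but bounding the component group of $Q$—equivalently, ruling out compact elements that normalize $\mathfrak{p}_I$ without centralizing $\mathfrak{a}_I$—is what forces the appeal to the Cartan involution and to the structure of $W_I$.
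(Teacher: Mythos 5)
Your argument is correct in substance, but it takes a genuinely different route from the paper's. The paper reduces the whole theorem in one stroke via the Bruhat decomposition $G = P_{\min}\cdot \operatorname{Normalizer}_K(\mathfrak{a})\cdot P_{\min}$: any normalizing element factors as $p_1 w p_2$ with $p_1,p_2 \in P_{\min}\subseteq P_I$, so everything collapses immediately to showing that an element $w\in \operatorname{Normalizer}_K(\mathfrak{a})$ preserving $\mathfrak{p}_I$ or $\mathfrak{n}_I$ lies in $K_I$, which is a root-system computation. You reach the same combinatorial endpoint (the stabilizer of $\mathfrak{n}_I$ in the restricted Weyl group is $W_I$, which fixes $\mathfrak{a}_I$ pointwise), but you get there by a different reduction: Lie-algebra normalizer identities ($\mathfrak{n}_{\mathfrak{g}}(\mathfrak{n}_I)=\mathfrak{n}_{\mathfrak{g}}(\mathfrak{p}_I)=\mathfrak{p}_I$) to control the identity component, then Iwasawa $G=KP_I$ to reduce to $K\cap Q$, then $\theta$-equivariance of $\operatorname{Ad}(k)$ and conjugacy of maximal abelian subspaces of $\mathfrak{s}\cap\mathfrak{l}_I$ under $K\cap L_I$ to push $k$ into $\operatorname{Normalizer}_K(\mathfrak{a})$. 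What your route buys: it avoids the Bruhat decomposition (the heaviest input in the paper's sketch), replacing it with the cheaper Iwasawa and Cartan-type conjugacy theorems, and it makes explicit the Lie-algebra computations that the paper delegates to Moore and Knapp. What the paper's route buys: the reduction to the Weyl group is immediate and uniform, with no need for your intermediate correction element $k_0$.

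One genuine (though repairable) flaw: the claim that ``$\mathfrak{n}_I$ is the nilradical of $\mathfrak{p}_I$, hence characteristic'' fails when $\mathfrak{g}$ has nontrivial center, which the paper's class of reductive groups explicitly allows. For example, with $\mathfrak{g}=\mathfrak{u}(1)\oplus\mathfrak{sl}(2,\R)$ and $I=\emptyset$, the maximal nilpotent ideal of $\mathfrak{p}_{\min}$ is $\mathfrak{u}(1)\oplus\mathfrak{n}$, strictly larger than $\mathfrak{n}$. The fix is cheap: $\operatorname{Ad}(g)$ preserves $[\mathfrak{g},\mathfrak{g}]$, and $\mathfrak{n}_I$ is the nilradical of the parabolic subalgebra $\mathfrak{p}_I\cap[\mathfrak{g},\mathfrak{g}]$ of the semisimple part (equivalently, $\mathfrak{n}_I$ is the intersection of the nilradical of $\mathfrak{p}_I$ with $[\mathfrak{p}_I,\mathfrak{p}_I]$, since $\mathfrak{z}(\mathfrak{g})\cap[\mathfrak{g},\mathfrak{g}]=0$). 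With that adjustment the inclusion $\operatorname{Normalizer}_G(\mathfrak{p}_I)\subseteq\operatorname{Normalizer}_G(\mathfrak{n}_I)$, and hence your whole cycle of inclusions, goes through.
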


Because of the Bruhat decomposition,  $G {=} P_{\min}{\cdot} \operatorname{Normalizer}_K(\mathfrak{a}){\cdot}P_{\min}$, the proof is  reduced  to checking that if $w \in \operatorname{Normalizer}_K(\mathfrak{a})$, and if $w$ conjugates the $P_I$ to itself, or $N_I$ to itself, then $w\in K_I = K \cap M_I$.  Since $w$ normalizes $\mathfrak{a}$, it permutes the restricted roots of $G$, and the proof may be completed by a combinatorial argument.
 
 \begin{theorem}[{\cite[Lemma\,6.2]{HarishChandra75}}]
 \label{thm-tits-theorem}
If $I, J \subseteq \Sigma$, and if $P_I$ and $P_J$ are conjugate in $G$, or $N_I$ and $N_J$ are conjugate in $G$, then $I=J$.
\end{theorem}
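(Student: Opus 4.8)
The plan is to reduce both assertions to a single combinatorial statement about the restricted Weyl group acting on the closed positive chamber. First I would dispose of the statement about nilpotent radicals by reducing it to the one about parabolic subgroups: if $gN_Ig^{-1}=N_J$ then $\Ad(g)\lie n_I=\lie n_J$, hence $g\operatorname{Normalizer}_G(\lie n_I)g^{-1}=\operatorname{Normalizer}_G(\lie n_J)$, and by Theorem~\ref{thm-normalizer-thm-of-moore} this says precisely that $gP_Ig^{-1}=P_J$. So it suffices to treat the case in which $P_I$ and $P_J$ are conjugate.

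Next I would use the Bruhat decomposition $G=P_{\min}\operatorname{Normalizer}_K(\mathfrak a)P_{\min}$ to strip the conjugating element down to a Weyl representative. Writing $g=p_1 w p_2$ with $p_1,p_2\in P_{\min}$ and $w\in\operatorname{Normalizer}_K(\mathfrak a)$, and using $P_{\min}\subseteq P_I\cap P_J$, the outer factors are absorbed: $p_2P_Ip_2^{-1}=P_I$ and $p_1^{-1}P_Jp_1=P_J$, so $gP_Ig^{-1}=P_J$ collapses to $wP_Iw^{-1}=P_J$. Passing to Lie algebras and invoking Theorem~\ref{thm-normalizer-thm-of-moore} again, this is the relation $\Ad(w)\lie p_I=\lie p_J$, where $w$ now acts on $\mathfrak a$ through the restricted Weyl group $W$ and permutes the restricted roots.

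The combinatorial core is the last step. I would pick dominant vectors $H_I,H_J\in\overline{\mathfrak a^+}$ with $\alpha(H_I)=0$ for $\alpha\in I$ and $\alpha(H_I)>0$ for $\alpha\in\Sigma\setminus I$, and similarly for $J$; these exist because $\Sigma$ is linearly independent. One then has $\lie p_I=\lie g_0\oplus\bigoplus_{\gamma(H_I)\ge 0}\lie g_\gamma$ (and likewise for $J$), so $\Ad(w)\lie p_I=\lie p_J$ is equivalent to the sign condition $\gamma(H_I)\ge 0 \iff \gamma(w^{-1}H_J)\ge 0$ for every restricted root $\gamma$. Setting $H''=w^{-1}H_J$, the forward implication applied to positive roots (on which $H_I$ is nonnegative by dominance) gives $\gamma(H'')\ge 0$ for all $\gamma\in\Delta^+$, so $H''$ is itself dominant. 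Since $H''$ lies in the $W$-orbit of $H_J$ and the closed chamber $\overline{\mathfrak a^+}$ is a strict fundamental domain for $W$, we conclude $H''=H_J$. Comparing vanishing sets then yields $\{\gamma:\gamma(H_I)=0\}=\{\gamma:\gamma(H'')=0\}=\{\gamma:\gamma(H_J)=0\}$; intersecting with $\Sigma$ recovers $I$ on the left and $J$ on the right, since a simple root vanishes on $H_I$ exactly when it lies in $I$. Hence $I=J$.

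The main obstacle is exactly this final step. Because $w$ need not preserve $\Delta^+$ — it may be a nontrivial Weyl element, and indeed any element of the Levi Weyl group $W_I$ fixes $\lie p_I$ — one cannot argue directly that $w$ stabilizes the positive system. The whole argument turns on converting $\Ad(w)\lie p_I=\lie p_J$ into a statement about a single $W$-orbit and then using the dominance of $H_I$ to force the translate $w^{-1}H_J$ back into the closed chamber, so that the fundamental-domain property can pin down equality of the defining faces. Getting this positivity bookkeeping right, rather than the reductions preceding it, is where the care is needed.
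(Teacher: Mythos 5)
Your proof is correct, and its reduction step is the same one the paper sketches: via the Bruhat decomposition $G=P_{\min}\cdot\operatorname{Normalizer}_K(\lie{a})\cdot P_{\min}$, everything comes down to showing that no element $w\in\operatorname{Normalizer}_K(\lie{a})$ can conjugate $P_I$ to $P_J$ when $I\ne J$. Beyond that, you do two things differently. First, you fold the nilpotent-radical case into the parabolic case at the outset, by passing to normalizers and applying Theorem~\ref{thm-normalizer-thm-of-moore}; the paper instead carries the $P$'s and the $N$'s through the reduction in parallel, so your route only needs the combinatorial fact once. Second, and more substantively, the paper does not prove the combinatorial core at all---it cites \cite[Prop.\,VI.20]{BourbakiLieGroups4to6} (Tits' result, proved there in root-system language, originally via $BN$-pairs)---whereas you supply a proof: writing $\lie{p}_I=\lie{g}_0\oplus\bigoplus_{\gamma(H_I)\ge 0}\lie{g}_\gamma$ for a dominant element $H_I\in\overline{\lie{a}^+}$ whose vanishing simple roots are exactly $I$ (your notation; not to be confused with the paper's subgroups $H_I$), converting $\Ad(w)\lie{p}_I=\lie{p}_J$ into a sign condition on all restricted roots, deducing from the positive roots that $w^{-1}H_J$ is again dominant, hence equal to $H_J$ by the strict-fundamental-domain property of the closed chamber, and finally comparing vanishing sets. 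I checked the details: the identification of $\lie{p}_I$ with the set of root spaces nonnegative on $H_I$ holds with the paper's definitions \eqref{eq-def-a-i}--\eqref{eq-def-n-i}, the dominance argument is right, and the vanishing-set comparison does require the $\pm\gamma$ trick you implicitly use (dominance alone says nothing about simple roots), which goes through; everything also survives in the non-reduced restricted root systems that actually occur here, since only standard chamber facts are invoked. What your route buys is self-containedness---nothing is needed beyond Bruhat, Theorem~\ref{thm-normalizer-thm-of-moore} and elementary Weyl-chamber theory---at the cost of being longer than the paper's citation.
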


This is a result of Tits \cite{Tits62}, and the proof is similar: thanks to the Bruhat decomposition, it suffices to prove that if $I{\ne}J$, then no element of $N_K(\lie{a})$ can conjugate $P_I$ to $P_J$, or $N_I$ into $N_J$.  See for instance \cite[Prop.\,VI.20]{BourbakiLieGroups4to6} for an exposition of the combinatorial proof of this latter fact in the language of root systems (Tits uses $BN$-pairs). 

\subsection{A smooth family of Lie subgroups}
\label{sec-smooth-family-of-subgroups}
In this section we shall describe the main step in Oshima's construction. 

\begin{definition}
If $\Sigma$ is any finite set, then we shall denote by 
$\R^\Sigma$ the space of finite sequences   $(t_\alpha)_{\alpha\in \Sigma}$  of real numbers that are indexed by the elements of $\Sigma$ (or in other words, functions from $\Sigma$ to $\R$). In addition we shall denote by  $\R^\Sigma_+$ the subset of $\R^\Sigma$ comprised of sequences that take only strictly positive values, and denote by $\R^\Sigma_\times$ the subset of $\R^\Sigma$ comprised of sequences that take only nonzero values.  
\end{definition}

\begin{definition}
\label{def-coordinatizing-a}
Let $G=KAN$ be a real reductive group, and let $\Sigma$ be the set of simple roots associated to the given Iwasawa decomposition.
\begin{enumerate}[\rm (i)]

\item 
For $a\in A$ and $\alpha \in \Sigma$, let
$a^\alpha = e^{\alpha(\log(a))}$.

\item 
For  $t\in \R^\Sigma_+$, denote by $a_t\in A$ any element for which 
$a_t^\alpha = t_\alpha$  for all $\alpha \in \Sigma$.
\end{enumerate}
\end{definition}

When $G$ has compact center, the element $a_t$ in (ii) above is unique.  
In general, when $G$ does not have compact center, the ratio of any two choices lies in the center of $G$. Because of this, we may unambiguously define a subgroup $H_t \subseteq G$, as follows:

 \begin{definition}
\label{def-h-t-t-non-zero}    
Let $G=KAN$ be a real reductive group, and let $\Sigma$ be the set of simple roots associated to the given Iwasawa decomposition.
\begin{enumerate}[\rm (i)]

\item 
For  $t\in \R^\Sigma_+$,  we shall write   $H_t = \Ad_{a_t} [K ]$.  

\item 
For $t\in \R^\Sigma_{\times}$, we  define $|t|$ to be the sequence of absolute values $\{ \, |t_\alpha|\,\}_{\alpha\in \Sigma}$, and then write  
$H_t =  H_{|t|}$.
\end{enumerate}
\end{definition}

We aim to prove the following result:

 \begin{theorem}
\label{thm:H}
Let us write $\bigG = G \times\R^\Sigma$.
The closure of the subset 
\[
\{\, (h,t) :h \in H_t,\,\,\,  t \in \R^\Sigma_\times \,\} \subseteq \bigG  
\]
is a smooth embedded  submanifold $\bigH \subseteq \bigG$, and the projection map 
\[
\bigH \longrightarrow \R^\Sigma
\]
is a smooth submersion whose fibers are  closed Lie subgroups $H_t \subseteq G$.   
\end{theorem}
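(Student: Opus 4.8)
The plan is to understand the limiting behaviour of the subgroups $H_t = \Ad_{a_t}[K]$ as some coordinates $t_\alpha \to 0$, and to produce explicit smooth local coordinates on $\bigH$ near such limit points. The starting observation is that $H_t$ depends only on $|t|$, so it suffices to analyze the closure over $\R^\Sigma_+$ and then account for the signs. The key object to track is the conjugate $\Ad_{a_t}(X)$ of a Lie algebra element $X \in \lie{k}$: decomposing $\lie{g}$ into restricted root spaces, an element $X = \sum_\gamma X_\gamma$ gets scaled by $\Ad_{a_t}$ so that the $\gamma$-component is multiplied by $a_t^\gamma = \prod_{\alpha} t_\alpha^{\langle \gamma, \alpha\rangle}$ (with the appropriate exponents expressing $\gamma$ in terms of the simple roots). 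As $t_\alpha \to 0$ for $\alpha$ in some set, the positive-root directions blow up and the negative-root directions collapse; the content of the theorem is that after conjugating $K$ the \emph{group} $H_t$ nonetheless has a well-defined smooth limit, which should be $H_I = K_I \overline N_I$ for $I$ the set of indices whose coordinate stays nonzero.

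\textbf{Main steps.} First I would identify the candidate limit subgroups: for each subset $I \subseteq \Sigma$, and more generally for each point $t$ with support pattern given by $I = \{\alpha : t_\alpha \neq 0\}$, guess that the fiber $H_t$ is a conjugate of $K_I \overline N_I$ by an element of $A_I$ determined by the nonzero coordinates. Second, I would verify this is a closed Lie subgroup — here the earlier structure theory (the decomposition $L_I = M_I A_I$, the groups $K_I = K \cap M_I$, $\overline N_I = \theta[N_I]$) does the work, and the normalizer theorems (Theorems~\ref{thm-normalizer-thm-of-moore} and~\ref{thm-tits-theorem}) guarantee these groups are genuinely distinct for distinct $I$, so the fiber assignment is well-defined. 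Third, and this is the technical heart, I would construct a smooth parametrization of $\bigH$ near a boundary fiber. The natural device is to use the exponential map on a $\theta$-stable complement: write a neighborhood of the identity in $H_t$ as $\exp(\Ad_{a_t}[\mathfrak{k}_{\mathrm{loc}}])$ for a slice of $\lie{k}$, and express the conjugated basis vectors in coordinates that absorb the singular factors $a_t^\gamma$. Concretely, for a positive root $\gamma$ with $\gamma[\lie{a}_I] \neq 0$ one pairs the $\gamma$- and $(-\gamma)$-components of a $K$-vector $X_\gamma - \theta X_\gamma$, and checks that the combination $a_t^\gamma(X_\gamma - \theta X_\gamma) \to X_\gamma$ survives in the limit (landing in $\overline N_I$) while the conjugate direction vanishes. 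This is exactly the mechanism by which $K$ degenerates to $K_I \overline N_I$, and it must be shown to depend smoothly on $t$ including through $t_\alpha = 0$.

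\textbf{The hard part} will be step three: proving that the closure is genuinely a smooth embedded submanifold and that the projection is a submersion with the asserted fibers, rather than merely a set-theoretic closure with the right fibers. The subtlety is that as $t_\alpha \to 0$ the conjugation $\Ad_{a_t}$ is singular, so one cannot simply take limits of $\Ad_{a_t}[K]$ naively; one must find the right rescaled coordinates (essentially a blow-up or b-calculus–type chart) in which the family extends smoothly. I expect the cleanest route is to build an explicit diffeomorphism from a product neighborhood — something like $K_I \times \overline N_I \times (\text{transverse }A\text{-directions}) \times \R^\Sigma_{\mathrm{loc}}$ onto a neighborhood of a boundary point of $\bigH$ — using the Iwasawa or $KAN$-type coordinates so that the group multiplication and the limiting process are both manifestly smooth. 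Once such a chart is exhibited, submersivity of the projection to $\R^\Sigma$ and the identification of fibers as closed Lie subgroups follow by reading off the chart, and the extension across sign changes in the $t_\alpha$ is handled by the $H_t = H_{|t|}$ convention together with the evenness of the surviving coordinate combinations.
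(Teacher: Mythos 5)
Your overall strategy --- absorb the singular conjugation factors $a_t^\gamma$ into rescaled, even-power coefficients, identify the limiting algebras as $\lie{k}_I\oplus\overline{\lie{n}}_I$ up to $A$-conjugacy, and obtain smoothness of $\bigH$ from exponential charts for the resulting smooth bundle of Lie algebras --- is essentially the paper's strategy (Definition~\ref{def-h-t}, Proposition~\ref{prop:h_t_structure}, and the final paragraph of the paper's proof, which translates your charts around the identity section by smooth sections of the bundle). But there is a genuine gap at the step you describe as building an explicit diffeomorphism onto a neighborhood of a boundary point of $\bigH$. Charts of the kind you propose parametrize points that demonstrably lie in the closure; this yields only the inclusion of the candidate manifold \emph{into} $\bigH$. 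What nothing in your outline delivers is the reverse inclusion: that the fiber of the closure over a boundary point $t_0$ is no larger than the candidate group $H_{t_0}=MH_{0,t_0}$, i.e.\ that an \emph{arbitrary} convergent sequence $h_n\in H_{t_n}$ with $t_n\in\R^\Sigma_\times$, $t_n\to t_0$, has its limit in $H_{t_0}$. Without this upper bound you cannot know that your chart image is a neighborhood of the boundary point \emph{in the closure}, so your claim that ``the identification of fibers \dots follow[s] by reading off the chart'' is circular; a priori the closure could contain extra limit points, for instance in the directions of $A_I$, which normalizes everything in sight.

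The paper closes this gap with two ingredients absent from your proposal. First, by continuity of the adjoint action any limit of elements of $H_{t_n}$ normalizes the limit algebra $\lie{h}_{t_0}$, and by Theorem~\ref{thm-normalizer-of-h-i} that normalizer is $A_IH_{t_0}$ --- strictly larger than $H_{t_0}$, so normalizer theorems alone do not finish the job. (You invoke the normalizer theorems only to get distinctness of the $H_I$ for distinct $I$, which is not where they are actually needed here.) Second, a spectral argument eliminates the $A_I$ factor: every element of $K$, hence of every conjugate $H_t=\Ad_{a_t}[K]$ with $t\in\R^\Sigma_\times$, has $\Ad$-spectrum on the unit circle; the spectrum varies continuously, so the same holds for every element of every fiber of $\bigH$; and the only elements of $M_\Sigma\cap A_IH_{t_0}$ with unit-circle spectrum lie in $H_{t_0}$. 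Some argument of this kind is indispensable and is the real content separating ``the closure has the asserted fibers'' from ``the candidate manifold sits inside the closure.'' Two smaller slips worth fixing: the limiting group over $t$ with support $I$ is a conjugate of $K_I\overline{N}_I$ by an element of $A$, not of $A_I$ (conjugation by $A_I$ fixes $K_I\overline{N}_I$); and since $a_t^\gamma\to 0$ for positive roots $\gamma$ as $t\to 0$, it is the positive-root directions that collapse and the negative-root ones that blow up, so the surviving rescaled vectors are $\theta(X_\gamma)\in\overline{\lie{n}}_I$, consistent with the limit group being $K_I\overline{N}_I$ rather than $K_IN_I$.
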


\begin{definition}
    \label{def_h}
   Let $\gamma\in \Delta^+(\mathfrak{g},\mathfrak{a})$. According to elementary root system theory, we may express $\gamma$ in a unique way as a combination
\[
 \gamma = \sum_{\alpha\in\Sigma} n_{\gamma,\alpha} \alpha,
 \]
 where each $n_{\gamma\alpha}$ is a nonnegative integer. For $t\in \R^\Sigma $, we shall write 
\begin{equation}
\label{eq:t2gamma}
  t^{2\gamma} = \prod_{\alpha\in\Sigma} t_{\alpha}^{2n_{\gamma,\alpha}} .
\end{equation}
(To be clear we shall write 
\[
t_{\alpha}^{2n_{\gamma,\alpha}}
= 
\begin{cases} 
1 & \text{if $t_\alpha=0$ and $ n_{\gamma,\alpha}  = 0$}
\\
0 & \text{if $t_\alpha=0$ and $n_{\gamma,\alpha} > 0$.}
\end{cases}
\]
With this convention,    $t^{2\gamma}$ becomes a polynomial function of $t$.)
\end{definition}

\begin{definition}
\label{def-h-t}
    For $t\in \R^\Sigma$,  define a linear subspace $\mathfrak{h}_t\subseteq \mathfrak{g}$ by
    \[
      \isotropy_t = \lie{m} \oplus \bigoplus_{\gamma\in\Delta^+(\mathfrak{g},\mathfrak{a})} \bigl \{ \,  t^{2 \gamma}X {+} \theta(X) : X_\gamma\in \lie{g}_\gamma\, \bigr \} 
    ,
    \]     
    where $\lie{m}$ is the centralizer of $\mathfrak{a}$ in $\mathfrak{k}$, and where $\theta$ is the Cartan involution.  
\end{definition}

\begin{proposition}
\label{prop:h}
Each subspace $\isotropy_{t}\subseteq \mathfrak{g}$ is a Lie subalgebra of $\mathfrak{g}$.  If $t\in \R^\Sigma_\times$, then $\lie{h}_t$ is the Lie algebra of the group $H_t$ in Definition~\textup{\ref{def-h-t}}. 
\end{proposition}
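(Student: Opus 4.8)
The plan is to prove both assertions by relating $\isotropy_t$ to the Lie algebra $\lie{k}$ of $K$ via the adjoint action of $A$. The starting point is the restricted-root-space decomposition $\lie{g} = \lie{m}\oplus\lie{a}\oplus\bigoplus_{\gamma}\lie{g}_\gamma$, together with the facts that $\theta$ carries $\lie{g}_\gamma$ to $\lie{g}_{-\gamma}$, fixes $\lie{m}$ pointwise, and acts as $-1$ on $\lie{a}$. From these one reads off the standard description of the $+1$-eigenspace of $\theta$,
\[
\lie{k} = \lie{m}\oplus\bigoplus_{\gamma\in\Delta^+(\lie{g},\lie{a})}\bigl\{\,X+\theta(X) : X\in\lie{g}_\gamma\,\bigr\}.
\]
I would record this and note that, since $\theta$ is injective and the subspaces $\lie{g}_{-\gamma}$ are independent, the map $X\mapsto c\,X+\theta(X)$ is injective on $\lie{g}_\gamma$ for any scalar $c$; hence $\dim\isotropy_t = \dim\lie{k}$ for \emph{every} $t\in\R^\Sigma$, a fact the continuity argument below will need.

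Next I would carry out the adjoint computation, which settles the second assertion and the first for $t\in\R^\Sigma_+$ at once. For $a_t\in A$ the operator $\Ad_{a_t}$ fixes $\lie{m}$ and acts on $\lie{g}_\gamma$ as the scalar $a_t^\gamma=\prod_\alpha t_\alpha^{n_{\gamma,\alpha}}$, which I abbreviate $t^\gamma$; note $(t^\gamma)^2=t^{2\gamma}$. Thus for $X\in\lie{g}_\gamma$,
\[
\Ad_{a_t}\bigl(X+\theta(X)\bigr) = t^\gamma X + t^{-\gamma}\theta(X) = t^{-\gamma}\bigl(t^{2\gamma}X + \theta(X)\bigr),
\]
and since $t^{-\gamma}$ is a nonzero scalar the image of $\{X+\theta(X)\}$ is exactly $\{t^{2\gamma}X+\theta(X)\}$. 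Combined with $\Ad_{a_t}[\lie{m}]=\lie{m}$ this gives $\Ad_{a_t}[\lie{k}]=\isotropy_t$ for $t\in\R^\Sigma_+$. As $\Ad_{a_t}$ is a Lie algebra automorphism and $\lie{k}$ is a subalgebra, $\isotropy_t$ is a subalgebra; and $\Ad_{a_t}[\lie{k}]$ is precisely the Lie algebra of $\Ad_{a_t}[K]=H_t$, which is the second assertion. The case $t\in\R^\Sigma_\times$ reduces to this one, because $t^{2\gamma}=|t|^{2\gamma}$ forces $\isotropy_t=\isotropy_{|t|}$, while $H_t=H_{|t|}$ by definition.

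It remains to prove the first assertion for $t$ having one or more vanishing coordinates, where $a_t$ is unavailable; this is the main obstacle. Here I would verify closure under the bracket directly on the spanning set, the only nontrivial bracket being
\[
\bigl[\,t^{2\gamma}X+\theta(X),\ t^{2\delta}Y+\theta(Y)\,\bigr],\qquad X\in\lie{g}_\gamma,\ Y\in\lie{g}_\delta,
\]
the brackets $[\lie{m},\lie{m}]$ and $[\lie{m},\,t^{2\gamma}X+\theta(X)]$ being immediate from $\theta|_{\lie{m}}=\mathrm{id}$ and the fact that $\lie{m}$ normalizes each $\lie{g}_\gamma$. Expanding the four terms and sorting by $\lie{a}$-weight, one pairs $t^{2\gamma}t^{2\delta}[X,Y]$ with $[\theta X,\theta Y]=\theta[X,Y]$, and $t^{2\gamma}[X,\theta Y]$ with $t^{2\delta}[\theta X,Y]=t^{2\delta}\theta[X,\theta Y]$. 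The computation rests on the multiplicative identity $t^{2\gamma}\,t^{2\delta}=t^{2(\gamma+\delta)}$, which holds even when coordinates vanish under the convention of Definition~\ref{def_h}. Using it, the first pair becomes $t^{2(\gamma+\delta)}W+\theta(W)$ with $W=[X,Y]\in\lie{g}_{\gamma+\delta}$, visibly in $\isotropy_t$ whenever $\gamma+\delta$ is a root; and the second pair, after factoring out $t^{2\gamma}$ or $t^{2\delta}$, becomes a scalar multiple of an element of $\lie{m}$ (when $\gamma=\delta$) or of an element $t^{2\beta}V+\theta(V)\in\isotropy_t$ with $\beta=\pm(\gamma-\delta)\in\Delta^+$, vanishing when $\gamma-\delta$ is neither $0$ nor a root.

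A cleaner route for the obstacle avoids this case analysis by continuity. The entries $t^{2\gamma}$ are polynomial in $t$, so $t\mapsto\isotropy_t$ is a continuous map into the Grassmannian of $(\dim\lie{k})$-planes (the dimension being constant by the first paragraph); the locus of Lie subalgebras is closed there; and every $t$ satisfies $\isotropy_t=\isotropy_{|t|}$ with $|t|\in\overline{\R^\Sigma_+}$, in the closure of $\R^\Sigma_+$. Since $\isotropy_s$ is a subalgebra for all $s\in\R^\Sigma_+$ by the second paragraph, the closedness of the subalgebra locus forces $\isotropy_t$ to be a subalgebra for every $t\in\R^\Sigma$. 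Either way, the first obstacle is the passage to the boundary where some $t_\alpha=0$, and both the direct identity $t^{2\gamma}t^{2\delta}=t^{2(\gamma+\delta)}$ and the continuity argument are tailored precisely to handle it.
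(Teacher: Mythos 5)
Your proposal is correct and takes essentially the same route as the paper: your computation $\Ad_{a_t}[\lie{k}]=\lie{h}_t$ for positive $t$ is the paper's conjugation lemma (Lemma~\ref{lem-h-conjugation}), and your Grassmannian continuity argument is precisely the paper's ``limiting argument,'' which uses the fact that the $\lie{h}_t$ are fibers of a smooth constant-rank subbundle of $\lie{g}\times\R^\Sigma$ and that the Lie-subalgebra condition is closed. Your direct bracket verification via the identity $t^{2\gamma}t^{2\delta}=t^{2(\gamma+\delta)}$ is a correct additional alternative that the paper does not carry out.
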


This is a consequence of the following two computations.

\begin{definition} 
\label{def-a-action}
Define a smooth action  of the Lie group $A$ on the linear space $\R^\Sigma$ by
\begin{equation*}
  (a\cdot t)_\alpha = a^{\alpha} t_\alpha\qquad \forall \alpha \in \Sigma .
\end{equation*}
\end{definition}

\begin{lemma}
If $a\in A$ and $t\in \R^\Sigma$, then
$(a {\cdot} t)^{2\gamma} = a^{2\gamma} t^{2\gamma}$. \qed
\end{lemma}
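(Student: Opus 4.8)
The plan is to unwind both sides of the identity directly from the definitions and to observe that everything reduces to the multiplicativity of the exponential together with the positivity of the factors $a^\alpha$. First I would assemble the three ingredients. The $A$-action is coordinatewise, $(a\cdot t)_\alpha = a^\alpha t_\alpha$ (Definition~\ref{def-a-action}); the monomial $t^{2\gamma}$ is the product $\prod_{\alpha\in\Sigma} t_\alpha^{2 n_{\gamma,\alpha}}$ attached to the expansion $\gamma = \sum_{\alpha} n_{\gamma,\alpha}\alpha$ (Definition~\ref{def_h}); and, extending Definition~\ref{def-coordinatizing-a}(i) from simple roots to arbitrary positive roots via $a^\gamma = e^{\gamma(\log a)}$, the relation $\gamma(\log a) = \sum_\alpha n_{\gamma,\alpha}\,\alpha(\log a)$ gives $a^\gamma = \prod_{\alpha}(a^\alpha)^{n_{\gamma,\alpha}}$ and hence $a^{2\gamma} = \prod_\alpha (a^\alpha)^{2 n_{\gamma,\alpha}}$, the exponential turning the sum of exponents into a product.

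With these in hand the computation is a single line:
\[
(a\cdot t)^{2\gamma} = \prod_{\alpha\in\Sigma}\bigl(a^\alpha t_\alpha\bigr)^{2 n_{\gamma,\alpha}} = \Bigl(\prod_{\alpha\in\Sigma}(a^\alpha)^{2 n_{\gamma,\alpha}}\Bigr)\Bigl(\prod_{\alpha\in\Sigma} t_\alpha^{2 n_{\gamma,\alpha}}\Bigr) = a^{2\gamma}\, t^{2\gamma},
\]
where the middle equality merely splits each factor of the product. For $t\in\R^\Sigma_\times$ this is honest exponentiation and nothing more need be said.

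The one point requiring care — and the only thing I would really call an obstacle — is the special convention adopted in Definition~\ref{def_h} when some $t_\alpha = 0$, since there $t^{2\gamma}$ is defined by a case distinction rather than by genuine exponentiation, and one must check that the factorwise step above survives it. Here I would use that $a^\alpha = e^{\alpha(\log a)} > 0$ for every $a\in A$, so that $(a\cdot t)_\alpha = a^\alpha t_\alpha$ vanishes precisely when $t_\alpha$ does; consequently the two coordinates trigger exactly the same branch of the convention. When $n_{\gamma,\alpha} = 0$ both $(a^\alpha t_\alpha)^{0}$ and $t_\alpha^{0}$ are declared to equal $1$, while when $n_{\gamma,\alpha} > 0$ and $t_\alpha = 0$ both $\bigl((a\cdot t)_\alpha\bigr)^{2 n_{\gamma,\alpha}}$ and $t_\alpha^{2 n_{\gamma,\alpha}}$ are declared to equal $0$. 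Thus the factorwise identity $(a^\alpha t_\alpha)^{2 n_{\gamma,\alpha}} = (a^\alpha)^{2 n_{\gamma,\alpha}}\, t_\alpha^{2 n_{\gamma,\alpha}}$ remains valid under the convention, both sides being $0$ exactly when some $t_\alpha$ with $n_{\gamma,\alpha} > 0$ vanishes, and the displayed chain of equalities therefore holds for all $t\in\R^\Sigma$, not merely for $t\in\R^\Sigma_\times$.
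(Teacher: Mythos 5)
Your proof is correct and is exactly the argument the paper intends: the paper states this lemma without proof, regarding it as immediate from Definitions~\ref{def-coordinatizing-a}, \ref{def-a-action} and \ref{def_h}, and your factorwise computation $(a\cdot t)^{2\gamma}=\prod_{\alpha}(a^\alpha t_\alpha)^{2n_{\gamma,\alpha}}=a^{2\gamma}t^{2\gamma}$ is precisely that routine verification. Your handling of the zero-coordinate convention---using $a^\alpha>0$ so that $(a\cdot t)_\alpha$ vanishes exactly when $t_\alpha$ does, whence both sides trigger the same branch---is the one point of substance needed to get the identity on all of $\R^\Sigma$ rather than just $\R^\Sigma_\times$, and you have it right.
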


\begin{lemma}
\label{lem-h-conjugation}
If $a\in A$ and $t\in \R^\Sigma$, then  $\isotropy_{a\cdot t} = \Ad_a\isotropy_t$. \qed
\end{lemma}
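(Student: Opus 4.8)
The plan is to verify the identity summand by summand, using the direct-sum description of $\isotropy_t$ from Definition~\ref{def-h-t} together with the fact that $\Ad_a$ acts diagonally on the restricted root space decomposition. Write $a^\gamma = e^{\gamma(\log a)}$ for $\gamma\in\Delta(\mathfrak{g},\mathfrak{a})$, extending the notation of Definition~\ref{def-coordinatizing-a}; then $\Ad_a$ acts on $\mathfrak{g}_\gamma$ as multiplication by the positive scalar $a^\gamma$, and on $\theta[\mathfrak{g}_\gamma]=\mathfrak{g}_{-\gamma}$ as multiplication by $a^{-\gamma}$. First I would dispose of the $\mathfrak{m}$ summand: since $\mathfrak{m}$ centralizes $\mathfrak{a}$, we have $[\log a, X]=0$ for $X\in\mathfrak{m}$, so $\Ad_a$ fixes $\mathfrak{m}$ pointwise and in particular $\Ad_a\mathfrak{m}=\mathfrak{m}$, matching the $\mathfrak{m}$ summand of $\isotropy_{a\cdot t}$.

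The heart of the argument is the action of $\Ad_a$ on a generator $t^{2\gamma}X+\theta(X)$ of the $\gamma$-summand, with $X\in\mathfrak{g}_\gamma$. Applying $\Ad_a$ and using the two scaling facts above gives
\[
\Ad_a\bigl(t^{2\gamma}X+\theta(X)\bigr) = a^\gamma t^{2\gamma}X + a^{-\gamma}\theta(X).
\]
Factoring out $a^{-\gamma}$ and invoking the preceding lemma, which gives $(a\cdot t)^{2\gamma}=a^{2\gamma}t^{2\gamma}$, this becomes $a^{-\gamma}\bigl((a\cdot t)^{2\gamma}X+\theta(X)\bigr)$. Since $a^{-\gamma}$ is a real scalar it commutes with $\theta$, so setting $Y=a^{-\gamma}X\in\mathfrak{g}_\gamma$ the right-hand side is exactly $(a\cdot t)^{2\gamma}Y+\theta(Y)$, a generator of the $\gamma$-summand of $\isotropy_{a\cdot t}$. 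Thus $\Ad_a$ carries the $\gamma$-summand of $\isotropy_t$ into that of $\isotropy_{a\cdot t}$; and because $X\mapsto a^{-\gamma}X$ is a bijection of $\mathfrak{g}_\gamma$, this map is onto. Summing over the $\mathfrak{m}$ summand and over all $\gamma\in\Delta^+(\mathfrak{g},\mathfrak{a})$ yields $\Ad_a\isotropy_t=\isotropy_{a\cdot t}$.

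There is no serious obstacle here; the only point requiring care is the bookkeeping in the factorization — in particular remembering that $\theta$ reverses the sign of the root, so that $\Ad_a$ scales the $\theta(X)$ term by $a^{-\gamma}$ rather than $a^\gamma$, and that the exponent in $(a\cdot t)^{2\gamma}=a^{2\gamma}t^{2\gamma}$ is precisely what is needed to convert $a^\gamma t^{2\gamma}$ into $a^{-\gamma}(a\cdot t)^{2\gamma}$. The convention in Definition~\ref{def_h} making $t^{2\gamma}$ a genuine polynomial, valid even when some coordinates of $t$ vanish, ensures that the computation goes through for every $t\in\R^\Sigma$ and not merely for $t\in\R^\Sigma_\times$.
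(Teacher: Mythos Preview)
Your proof is correct and follows essentially the same route as the paper: both compute $\Ad_a$ on a generator $t^{2\gamma}X+\theta(X)$, use the scaling by $a^{\pm\gamma}$ on $\mathfrak{g}_{\pm\gamma}$, factor out $a^{-\gamma}$, and invoke $(a\cdot t)^{2\gamma}=a^{2\gamma}t^{2\gamma}$ to identify the result with a generator of the $\gamma$-summand of $\isotropy_{a\cdot t}$. Your version is slightly more explicit (you treat the $\mathfrak{m}$ summand separately and spell out the substitution $Y=a^{-\gamma}X$), but there is no substantive difference.
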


\begin{proof} 
 We calculate that
    \begin{equation*}
    \begin{aligned}
      \Ad_a \bigl \{\, t^{2\alpha}X{+}\theta(X): X  \in \mathfrak{g}_\gamma \,\bigr \}
      & = \bigl \{ \, a^\alpha t^{2\alpha}X + a^{-\alpha}\theta (X) : X\in \mathfrak{g}_\gamma \,  \bigr \} 
      \\
      & = 
       a^{-\alpha} \cdot \bigl \{\, (a \cdot t)^{2\alpha}X +  \theta (X) : X \in \mathfrak{g}_\gamma \,\bigr \} 
      \\
      & = \bigl \{ \, (a \cdot t)^{2\alpha}X +  \theta (X) : X \in \mathfrak{g}_\gamma \,\bigr\},
      \end{aligned}
    \end{equation*}
    which, in view of the formulas  for $\mathfrak{h}_t$ and $\mathfrak{h_{a\cdot t}}$ in Definition~\ref{def-h-t},  proves the lemma. 
\end{proof}

\begin{proof}[Proof of Proposition~\ref{prop:h}]  
    If  $t_\alpha{=}\pm 1$ for all $\alpha$, then $t^{2 \gamma} =1$ for all $\gamma$, and therefore $\mathfrak{h}_t = \mathfrak{k}$. If $t_\alpha {\ne} 0$ for all $\alpha$, then   it  follows from the Lemma~\ref{lem-h-conjugation}  that $\mathfrak{h}_t$ is the Lie algebra of $H_t$, and so in particular is a Lie algebra.  For general $t $, it follows from a limiting argument that $\lie{h}_t$ is a Lie algebra, since it is clear from  Definition~\ref{def-h-t} that the $\mathfrak{h}_t$ are the fibers of a smooth vector subbundle of the constant bundle over $\R^\Sigma$ with fiber $\mathfrak{g}$. 
\end{proof}

We can describe the Lie algebras $\mathfrak{h_t}$ more explicitly as follows.  We have a direct sum decomposition
\begin{equation}
\label{eq-direct-sum-decomp-of-h-t}
  \isotropy_{t}  = \lie{k}_t \oplus \theta\bigl [ \lie{n}_t\bigr ],
\end{equation}
where 
\[
\lie{k}_t =  \lie{m} \oplus \bigoplus_{\substack{\gamma\in\Delta^+(\mathfrak{g},\mathfrak{a})\\ t^{2\gamma} \ne 0}} \bigl \{ \,   t^{2 \gamma} X_\gamma {+} \theta(X_\gamma) : X_\gamma\in \lie{g}_\gamma\, \bigr \} 
\quad \text{and} \quad    \lie{n}_t = \bigoplus_{\substack{\gamma\in\Delta^+(\mathfrak{g},\mathfrak{a})\\ t^{2\gamma} =0}}  \mathfrak{g}_\gamma    .
\]
If, for a given $t\in \R^\Sigma$, we set  
\begin{equation}
\label{eq:I}
I = \{ \, \alpha \in \Sigma: t_\alpha \ne 0\,\},
\end{equation}
then  
\[
t^{2\gamma}\ne 0 
\quad \Leftrightarrow \quad  
n_{\gamma,\alpha} = 0\,\,  \forall \alpha \notin I \quad \Leftrightarrow \quad  \gamma [ \mathfrak{a}_I] = 0,
\]
where $\mathfrak{a}_I$ is the mutual kernel of all $\alpha \in I$, as in \eqref{eq-def-a-i}. So we may also write 
\[
\lie{k}_t =  \lie{m} \oplus \bigoplus_{\substack{\gamma\in\Delta^+(\mathfrak{g},\mathfrak{a})\\ \gamma [ \mathfrak{a}_I] =0}} \bigl \{ \,   t^{2 \gamma} X_\gamma {+} \theta(X_\gamma) : X_\gamma\in \lie{g}_\gamma\, \bigr \} 
\quad \text{and} \quad    
\lie{n}_t = \bigoplus_{\substack{\gamma\in\Delta^+(\mathfrak{g},\mathfrak{a})\\ \gamma [ \mathfrak{a}_I] \ne 0}} \mathfrak{g}_\gamma    .
\]

\begin{proposition}
    \label{prop:h_t_structure}
For every $t\in \R^\Sigma$, the Lie subalgebra $\lie{h}_t$ is conjugate, via an element of $A$,  to a Lie subalgebra of the form
\begin{equation*}
\mathfrak{h}_I = \mathfrak{k}_I \oplus \overline{\mathfrak{n}}_I\, 
\end{equation*}
where:
\begin{enumerate}[\rm (i)]

\item 
$I = \{ \, \alpha \in \Sigma: t_\alpha \ne 0\,\}$,  as in \eqref{eq:I}, 

\item 
$\overline{n}_I = \theta [\mathfrak{n}_I]$, with $\mathfrak{n}_I$ as in \eqref{eq-def-n-i}, and 

\item 
$\lie{k}_I= \lie{k}\cap \lie{m}_I $, with $\lie{m}_I$  as in $\eqref{eq-def-m-i}$.
\end{enumerate}
The Lie algebra $\lie{k}_t$ normalizes, $\lie{n_t}$, so that the displayed decomposition is a semidirect product decomposition. 
\end{proposition}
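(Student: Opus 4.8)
The plan is to build the conjugating element of $A$ by rescaling the coefficients $t^{2\gamma}$ to $1$, and to read off everything else from the decomposition \eqref{eq-direct-sum-decomp-of-h-t} already in hand. First observe that the second summand needs no adjustment at all: the equivalence $t^{2\gamma}=0 \Leftrightarrow \gamma[\lie{a}_I]\ne 0$ recorded just before the proposition identifies $\lie{n}_t$ with the $\lie{n}_I$ of \eqref{eq-def-n-i}, so that $\theta[\lie{n}_t]=\theta[\lie{n}_I]=\overline{\lie{n}}_I$ on the nose. Thus all of the work lies in conjugating $\lie{k}_t$ onto $\lie{k}_I=\lie{k}\cap\lie{m}_I$.

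Next I would compute $\lie{k}\cap\lie{m}_I$ explicitly and compare it with the displayed formula for $\lie{k}_t$. Since $\lie{m}\subseteq\lie{k}$, since $\lie{a}_I^\perp\subseteq\lie{a}$ lies in the $-1$-eigenspace of $\theta$ and so meets $\lie{k}$ trivially, and since $\theta$ interchanges $\lie{g}_\gamma$ and $\lie{g}_{-\gamma}$ so that the $\theta$-fixed part of $\lie{g}_\gamma\oplus\lie{g}_{-\gamma}$ is $\{\,X+\theta(X):X\in\lie{g}_\gamma\,\}$, intersecting the ($\theta$-stable) Levi algebra $\lie{m}_I$ of \eqref{eq-def-m-i} with $\lie{k}$ gives
\[
\lie{k}_I=\lie{m}\oplus\bigoplus_{\substack{\gamma\in\Delta^+(\lie{g},\lie{a})\\ \gamma[\lie{a}_I]=0}}\bigl\{\,X_\gamma+\theta(X_\gamma):X_\gamma\in\lie{g}_\gamma\,\bigr\}.
\]
This is exactly the formula for $\lie{k}_t$ displayed before the proposition, save that each coefficient $t^{2\gamma}$ has been replaced by $1$.

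It therefore remains to rescale those coefficients to $1$ by an element of $A$. Using Lemma~\ref{lem-h-conjugation}, which gives $\Ad_a\lie{h}_t=\lie{h}_{a\cdot t}$ with $(a\cdot t)_\alpha=a^\alpha t_\alpha$, I would pick $a\in A$ with $a^\alpha=|t_\alpha|^{-1}$ for every $\alpha\in I$; such $a$ exists because the simple roots are linearly independent functionals on $\lie{a}$, so the finitely many constraints indexed by $\alpha\in I$ are satisfiable by some $H\in\lie{a}$ with $a=\exp H$. Then $|(a\cdot t)_\alpha|=1$ for $\alpha\in I$ while $(a\cdot t)_\alpha=0$ for $\alpha\notin I$, whence $(a\cdot t)^{2\gamma}=1$ when $\gamma[\lie{a}_I]=0$ and $(a\cdot t)^{2\gamma}=0$ otherwise. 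Substituting into Definition~\ref{def-h-t} yields $\lie{h}_{a\cdot t}=\lie{k}_I\oplus\overline{\lie{n}}_I=\lie{h}_I$, as claimed.

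For the semidirect structure, recall from \eqref{eq-def-m-i}--\eqref{eq-def-n-i} that $\lie{m}_I$ and $\lie{a}_I$ normalize $\lie{n}_I$; applying the automorphism $\theta$, and using that $\lie{m}_I$ is $\theta$-stable, shows that $\lie{m}_I$ normalizes $\overline{\lie{n}}_I=\theta[\lie{n}_I]$. In particular $\lie{k}_I\subseteq\lie{m}_I$ normalizes $\overline{\lie{n}}_I$, and since $\overline{\lie{n}}_I$ is itself a subalgebra (being the $\theta$-image of the subalgebra $\lie{n}_I$), the displayed decomposition $\lie{h}_I=\lie{k}_I\ltimes\overline{\lie{n}}_I$ is a semidirect product; transporting this back by $\Ad_{a^{-1}}$ gives the corresponding statement for $\lie{k}_t$ and $\theta[\lie{n}_t]$. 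The steps are all routine once the coefficients are matched up; the only point needing care is the bookkeeping of the $t^{2\gamma}$ convention at the boundary values $t_\alpha=0$, but this is handled uniformly by the equivalence $t^{2\gamma}\ne 0\Leftrightarrow\gamma[\lie{a}_I]=0$ already established, so I do not anticipate a genuine obstacle.
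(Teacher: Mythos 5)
Your proof is correct and takes essentially the same route as the paper's: identify $\lie{h}_I$ with $\lie{h}_{t'}$ for $t'$ having entries $\pm 1$ on $I$ and $0$ elsewhere, then use the equivariance $\Ad_a\lie{h}_t=\lie{h}_{a\cdot t}$ of Lemma~\ref{lem-h-conjugation} to rescale $t$ to such a $t'$ by an element of $A$. You merely fill in details the paper leaves implicit---the explicit choice of $a$ with $a^\alpha=|t_\alpha|^{-1}$, the verification that $\lie{k}\cap\lie{m}_I$ equals the displayed formula, and the semidirect-product claim---so there is nothing to add.
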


\begin{proof}
According to the definitions in the statement of the proposition, 
\begin{equation}
\label{eq-def-h-i-2}
\lie{k}_I =\lie{m} \oplus \bigoplus_{\substack{\gamma\in\Delta^+(\mathfrak{g},\mathfrak{a})\\ \gamma[\mathfrak{a}_I]=0}} \bigl \{ \,   X_\gamma {+} \theta(X_\gamma) : X_\gamma\in \lie{g}_\gamma\, \bigr \} 
\quad \text{and} \quad 
 \lie{n}_I  =\bigoplus_{\substack{\gamma\in\Delta^+(\mathfrak{g},\mathfrak{a})\\  \gamma[\mathfrak{a}_I]\ne 0 }}
 \mathfrak{g}_\gamma   ,
\end{equation}
so that $\lie{h}_I = \lie {h}_{t'}$ for any $t'\in \R^\Sigma$ with 
\[
t'_{\alpha} =
\begin{cases}
    \pm 1 & \alpha \in I
    \\
    0 & \alpha \notin I.
\end{cases}
\]
But the spaces $\lie{k}_t$ and $\lie{n_t} $ satisfy 
\begin{equation}
    \label{eq-conjugates-of-k-t-and-n-t}
\Ad_a [ \lie{k}_t] = \lie{k}_{a\cdot t} 
\quad \text{and} \quad 
\Ad_a [ \lie{n}_t] = \lie{n}_{t}.
\end{equation}
So $\mathfrak{h}_t$ is conjugate to some $\mathfrak{h}_{t'}$, as above.
\end{proof}

Let us turn now from Lie subalgebras to Lie subgroups.

\begin{definition}
    \label{def:H-circ}
For each $t\in\R^\Sigma$, denote by  $H_{0,t}$ the connected Lie subgroup of $G$ with Lie algebra $\mathfrak{h}_t$.  For $I\subseteq \Sigma$ denote by  $H_{0,I}$ the connected Lie subgroup of $G$ with Lie algebra $\mathfrak{h}_I$.
\end{definition}

\begin{lemma} 
\label{lem-h-0-t-is-closed}
Each $H_{0,t}$ and each $H_{0,I}$ is a closed subgroup of $G$.
\end{lemma}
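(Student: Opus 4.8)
The plan is to reduce the statement to the subgroups $H_{0,I}$ and then to present each such group concretely as the product of a compact group with a closed nilpotent group.

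For the reduction, recall that Proposition~\ref{prop:h_t_structure} produces an element $a\in A$ with $\mathfrak{h}_t=\Ad_a\,\mathfrak{h}_I$, where $I=\{\alpha\in\Sigma: t_\alpha\ne 0\}$. Conjugation by $a$ is an automorphism of $G$ satisfying $a\exp(X)a^{-1}=\exp(\Ad_a X)$, so it carries the connected subgroup with Lie algebra $\mathfrak{h}_I$ onto the connected subgroup with Lie algebra $\Ad_a\mathfrak{h}_I=\mathfrak{h}_t$; that is, $H_{0,t}=a\,H_{0,I}\,a^{-1}$. Since conjugation by $a$ is a homeomorphism of $G$, the group $H_{0,t}$ is closed if and only if $H_{0,I}$ is. It therefore suffices to treat the groups $H_{0,I}$.

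For these, I would use the structure supplied by Proposition~\ref{prop:h_t_structure}: the decomposition $\mathfrak{h}_I=\mathfrak{k}_I\oplus\overline{\mathfrak{n}}_I$ is a semidirect sum in which $\overline{\mathfrak{n}}_I=\theta[\mathfrak{n}_I]$ is an ideal. Writing $K_{0,I}$ for the connected subgroup of $G$ with Lie algebra $\mathfrak{k}_I$, and noting that $\theta[N_I]$ is the connected subgroup with Lie algebra $\overline{\mathfrak{n}}_I$, the standard passage from a semidirect sum of Lie algebras to the corresponding connected subgroups yields $H_{0,I}=K_{0,I}\cdot\theta[N_I]$, with $\theta[N_I]$ normal. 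One justifies this by observing that $\theta[N_I]$ is a connected normal subgroup of $H_{0,I}$ and that $K_{0,I}$ surjects onto the quotient $H_{0,I}/\theta[N_I]$, whose Lie algebra is $\mathfrak{k}_I$. Now the two facts that force the product to be closed are these. First, $\theta[N_I]$ is closed, because $N_I$ is closed and $\theta$ is a homeomorphism of $G$. Second, $K_{0,I}$ is compact: the intersection $K\cap M_I$ is a closed subgroup of the compact group $K$, hence compact, with Lie algebra $\mathrm{Lie}(K\cap M_I)=\mathfrak{k}\cap\mathfrak{m}_I=\mathfrak{k}_I$, so $K_{0,I}$ is its identity component and is compact. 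With these in hand, closedness follows from a short sequence argument (legitimate since $G$ is metrizable): if $k_n\overline{n}_n\to g$ with $k_n\in K_{0,I}$ and $\overline{n}_n\in\theta[N_I]$, then after passing to a subsequence $k_n\to k\in K_{0,I}$ by compactness, whence $\overline{n}_n=k_n^{-1}(k_n\overline{n}_n)\to k^{-1}g$ lies in the closed set $\theta[N_I]$, so $g=k\,(k^{-1}g)\in K_{0,I}\cdot\theta[N_I]$.

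The step I expect to require the most care is the compactness of $K_{0,I}$. A connected Lie subgroup of a compact group need not be closed (witness the irrational windings on a torus), so it is essential to identify $K_{0,I}$ as the identity component of the genuinely closed subgroup $K\cap M_I$ rather than merely as some connected subgroup sitting inside $K$; the computation $\mathrm{Lie}(K\cap M_I)=\mathfrak{k}\cap\mathfrak{m}_I$ is exactly what secures this identification. Once that point is settled, everything else is routine.
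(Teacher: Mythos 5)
Your proposal is correct and follows essentially the same route as the paper's proof: reduce to the groups $H_{0,I}$ via the $A$-conjugacy of Proposition~\ref{prop:h_t_structure}, identify $K_{0,I}$ as the identity component of a closed subgroup of $K$ (hence compact), note that the connected subgroup with Lie algebra $\overline{\mathfrak{n}}_I$ is closed, and conclude that the product of a compact subgroup with a closed subgroup is closed. The only differences are cosmetic—you obtain compactness from $K\cap M_I$ where the paper uses the centralizer of $\mathfrak{a}_I$ in $K$, and you get closedness of $\theta[N_I]$ from closedness of $N_I$ rather than from its being a connected subgroup of the exponential group $\overline{N}$—and your write-up is in fact somewhat more detailed than the paper's (e.g.\ in justifying $H_{0,I}=K_{0,I}\cdot\theta[N_I]$ and the final sequence argument).
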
 

\begin{proof} 
By Proposition \ref{prop:h_t_structure}, it suffices to prove that $H_{0,I}$ is closed,  for every $I\subseteq \Sigma$. The connected Lie subgroup $K_{0,I}$ with Lie algebra $\mathfrak{k}_I$ is the   connected component of the identity in the centralizer of $\mathfrak{a}_I$ in $K$, and is therefore closed, and indeed compact.  The connected Lie subgroup with Lie algebra  $\overline{ \mathfrak{n}}_I$ is a connected Lie subgroup of the exponential group $\overline{N}$, and is therefore closed in $\overline{N}$ and in $G$.  The group $H_{0,I}$ is the product of these two subgroups, and is therefore closed.
\end{proof}

\begin{lemma} If $t\in \R^\Sigma_\times $, then  the Lie subgroup $H_t\subseteq G $ in Definition~\textup{\ref{def-h-t-t-non-zero}} is equal to the product
$
H_t = MH_{0,t}$, 
where, as before,  $M$ is the centralizer of $\mathfrak{a}$ in $K$. 
\end{lemma}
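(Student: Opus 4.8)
The plan is to reduce the statement to the single structural fact that $M$ meets every connected component of $K$, i.e.\ that $K = MK_0$, where $K_0$ denotes the identity component of $K$.

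First I would reduce to the case $t \in \R^\Sigma_+$. Since $t^{2\gamma} = |t|^{2\gamma}$ for every $\gamma$, Definition~\ref{def-h-t} gives $\mathfrak{h}_t = \mathfrak{h}_{|t|}$, hence $H_{0,t} = H_{0,|t|}$; and by Definition~\ref{def-h-t-t-non-zero} we also have $H_t = H_{|t|}$, while $M$ does not depend on $t$. So it suffices to treat $t \in \R^\Sigma_+$, where $H_t = \Ad_{a_t}[K]$. Because $\Ad_{a_t}$ is an automorphism of $G$, it carries $K_0$ to the identity component of $H_t$; and by Lemma~\ref{lem-h-conjugation} (applied to $a_t$ and the all-ones sequence, for which $\mathfrak{h} = \mathfrak{k}$) we have $\mathfrak{h}_t = \Ad_{a_t}\mathfrak{k}$, so this identity component is precisely $H_{0,t} = \Ad_{a_t}[K_0]$. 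Moreover $a_t \in A$ centralizes $M = Z_K(\mathfrak{a})$, so $M = \Ad_{a_t}[M] \subseteq \Ad_{a_t}[K] = H_t$ and $\Ad_{a_t}[MK_0] = MH_{0,t}$. Applying the automorphism $\Ad_{a_t}^{-1}$ therefore shows that the asserted identity $H_t = MH_{0,t}$ is equivalent to $K = MK_0$.

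It remains to prove $K = MK_0$, which I would do in two steps. First, $\mathfrak{a}$ is a maximal abelian subspace of the $-1$ eigenspace $\mathfrak{p}$ of $\theta$, and any two such subspaces are conjugate under $K_0$; applying this to $\Ad(k)\mathfrak{a}$ for $k \in K$ produces $k_0 \in K_0$ with $k_0 k \in N_K(\mathfrak{a})$, whence $K = K_0 N_K(\mathfrak{a})$. Second, I would show $N_K(\mathfrak{a}) \subseteq K_0 M$: each $k \in N_K(\mathfrak{a})$ acts on $\mathfrak{a}$ through the Weyl group $W(\Sigma)$ of the restricted root system, and $W(\Sigma)$, being generated by reflections in restricted roots, is already realized by $N_{K_0}(\mathfrak{a})$ via $\mathfrak{sl}_2$-triples. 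Thus for $k \in N_K(\mathfrak{a})$ there is $k_0 \in N_{K_0}(\mathfrak{a})$ inducing the same element of $W(\Sigma)$, so $kk_0^{-1} \in Z_K(\mathfrak{a}) = M$ and $k \in MK_0$. Combining the two steps yields $K = K_0 N_K(\mathfrak{a}) \subseteq K_0 M = MK_0$, as required.

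The main obstacle is the claim in the second step that $N_K(\mathfrak{a})$ acts on $\mathfrak{a}$ only through $W(\Sigma)$, with no extra diagram automorphisms arising from the components of the possibly disconnected group $K$. This is exactly where the Harish-Chandra class hypothesis enters: the condition $\Ad(G) \subseteq \mathrm{Inn}(\mathfrak{g}_\C)$ forces every $\Ad(k)$ to be inner on the complexification, which excludes outer automorphisms of the restricted root system and pins the image of $N_K(\mathfrak{a})$ in $\mathrm{Aut}(\mathfrak{a})$ down to $W(\Sigma)$. (Alternatively, one may simply invoke the standard structural fact that $M$ meets every component of $K$; cf.\ \cite{KnappBeyond}.)
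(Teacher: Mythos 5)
Your argument is correct, and its core coincides with the paper's: both proofs hinge on conjugating by $a_t$, which centralizes $M$, so that the lemma reduces to the single structural identity $K = MK_0$, i.e.\ to the fact that $M$ meets every connected component of $K$. (Your preliminary reductions---from $\R^\Sigma_\times$ to $\R^\Sigma_+$, and the identification $H_{0,t} = \Ad_{a_t}[K_0]$ via Lemma~\ref{lem-h-conjugation}---are correct, and are left implicit in the paper.) Where you diverge is in the treatment of that identity: the paper simply cites it (\cite[Lemma~4.11]{HarishChandra75} or \cite[Prop.~7.33]{KnappBeyond}) and concludes in one line, whereas you reprove it by the standard two-step argument: $K = K_0 N_K(\mathfrak{a})$ from $K_0$-conjugacy of maximal abelian subspaces of $\mathfrak{p}$, followed by $N_K(\mathfrak{a}) \subseteq M K_0$ from the equality of $N_K(\mathfrak{a})/Z_K(\mathfrak{a})$ with the restricted Weyl group $W(\Sigma)$. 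What your route buys is transparency about exactly where the Harish-Chandra class hypothesis enters; what it costs is that the Weyl-group equality is itself a theorem of essentially the same depth as the fact the paper cites. Your one-sentence justification of it (inner automorphisms of $\mathfrak{g}_{\C}$ cannot induce diagram automorphisms of $\Sigma$) is true but is not a formal consequence of the definition of innerness: the standard proof passes to a maximally split $\theta$-stable Cartan subalgebra, uses that an inner automorphism of $\mathfrak{g}_{\C}$ normalizing a Cartan subalgebra acts on it through the complex Weyl group, and then analyzes restrictions to $\mathfrak{a}$. So your proof from scratch really just moves the citation one level down---which your closing parenthetical, offering the citation as an alternative, effectively concedes.
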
 

\begin{proof} 
The group $M$  is included in $K$ and  meets each connected component of $K$; see \cite[Lemma\,4.11]{HarishChandra75} or \cite[Prop.\,7.33]{KnappBeyond}.  Moreover it commutes with  $a_t$, and therefore
\[
H_t = \Ad_{a_t} [K] = M \cdot \Ad_{a_t} [K_0] = M H_{0,t} ,
\]
as required.
\end{proof} 

 This prompts the following definition:
 
\begin{definition}
\label{def-of-all-h-t-groups}
If $t$ is any element of $\R^\Sigma$, then  we shall write 
$  H_t = M H_{0,t}$. In addition, for $I\subseteq \Sigma$ we shall write $H_I = MH_{0,I}$.
\end{definition} 

The group $H_I$ is a closed subgroup of $G$, and it decomposes as a semidirect product  
\begin{equation*}
    H_I = K_I\ltimes \overline{N}_I,
\end{equation*}
where $K_I = M K_{0,I}$ (and where $K_{0,I}$ is the compact, connected Lie subgroup of $G$ with Lie algebra $\mathfrak{k}_I$, as in the proof of Lemma~\ref{lem-h-0-t-is-closed}).

\begin{theorem} 
\label{thm-normalizer-of-h-i}
The normalizer in $G$ of the Lie subalgebra  $\lie{h}_I{\subseteq} \lie{g}$  is $A_IH_I$.  Moreover, $\lie{h}_I$ is conjugate to $\lie{h}_{J}$ if and only if $I = J$.
\end{theorem}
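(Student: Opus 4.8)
The plan is to extract from the Lie algebra $\mathfrak{h}_I$ a canonical piece that pins down a parabolic subgroup, and then to invoke the normalizer theorems of Moore (Theorem~\ref{thm-normalizer-thm-of-moore}) and Tits (Theorem~\ref{thm-tits-theorem}). The key point is that the nilpotent part $\overline{\mathfrak{n}}_I$ of the decomposition $\mathfrak{h}_I = \mathfrak{k}_I \oplus \overline{\mathfrak{n}}_I$ of Proposition~\ref{prop:h_t_structure} is essentially intrinsic to $\mathfrak{h}_I$. Indeed $\mathfrak{k}_I$ is the maximal compact subalgebra of the reductive algebra $\mathfrak{m}_I$, so it is reductive and acts on $\mathfrak{g}$ by semisimple transformations, while $\overline{\mathfrak{n}}_I$ is a nilpotent ideal. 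I would show that the nilradical (largest nilpotent ideal) of $\mathfrak{h}_I$ is $\overline{\mathfrak{n}}_I \oplus (\mathfrak{z}(\mathfrak{g})\cap\mathfrak{k})$: projecting a nilpotent ideal onto $\mathfrak{h}_I/\overline{\mathfrak{n}}_I \cong \mathfrak{k}_I$ lands it in the center $\mathfrak{z}(\mathfrak{k}_I)$, and an element of $\mathfrak{z}(\mathfrak{k}_I)$ can extend $\overline{\mathfrak{n}}_I$ to a nilpotent ideal only if it centralizes $\overline{\mathfrak{n}}_I$; since such an element lies in $\mathfrak{k}$, applying $\theta$ shows it also centralizes $\mathfrak{n}_I$, hence all of $\mathfrak{g}$. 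As $\mathfrak{z}(\mathfrak{g})\cap\mathfrak{k}$ meets $[\mathfrak{g},\mathfrak{g}]$ trivially, we obtain the intrinsic, $\Ad$-equivariant description
\[
\overline{\mathfrak{n}}_I = \operatorname{nilrad}(\mathfrak{h}_I)\cap[\mathfrak{g},\mathfrak{g}].
\]

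Granting this, I would argue as follows. If $g$ normalizes $\mathfrak{h}_I$, then $\Ad_g$ preserves $\overline{\mathfrak{n}}_I=\theta[\mathfrak{n}_I]$; applying $\theta$ and Moore's theorem (Theorem~\ref{thm-normalizer-thm-of-moore}) to $\mathfrak{n}_I$ gives $\theta(g)\in\operatorname{Normalizer}_G(\mathfrak{n}_I)=P_I$, i.e. $g\in\theta[P_I]=\overline{P}_I=M_IA_I\overline{N}_I$. This reduces the computation to $\overline{P}_I$. For the reverse inclusion $A_IH_I\subseteq\operatorname{Normalizer}_G(\mathfrak{h}_I)$, I note that $\overline{N}_I$ and $K_{0,I}$ lie in the connected group $H_{0,I}$ with Lie algebra $\mathfrak{h}_I$, that $M$ preserves each restricted root space and commutes with $\theta$, and that $A_I$ preserves $\mathfrak{h}_I$ by Lemma~\ref{lem-h-conjugation} (since $a\cdot t'=t'$ for $a\in A_I$ and $t'$ the defining parameter of $\mathfrak{h}_I$). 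As the normalizer is a group, it then contains $A_IH_I=A_IMK_{0,I}\overline{N}_I$.

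It remains to show $\operatorname{Normalizer}_{\overline{P}_I}(\mathfrak{h}_I)\subseteq A_IH_I$. Since $\overline{N}_I\subseteq H_I$ already normalizes $\mathfrak{h}_I$, I absorb the $\overline{N}_I$-factor and reduce to $\ell\in L_I=M_IA_I$; as $A_I$ is central in $L_I$, acts trivially on $\mathfrak{m}_I$, and normalizes $\mathfrak{h}_I$, I reduce further to $m\in M_I$. Because $M_I$ preserves $\mathfrak{m}_I$ and $\overline{\mathfrak{n}}_I$ while $\mathfrak{h}_I\cap\mathfrak{m}_I=\mathfrak{k}_I$, such an $m$ normalizes $\mathfrak{h}_I$ if and only if it normalizes $\mathfrak{k}_I$. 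The crux is then the classical fact that in $M_I$ — which has \emph{compact center}, the defining property of the Langlands factor (see \cite[Sec.\,VII.7]{KnappBeyond}) — one has $\operatorname{Normalizer}_{M_I}(\mathfrak{k}_I)=K_I$. I would prove this via the Cartan decomposition $\mathfrak{g}=\mathfrak{k}\oplus\mathfrak{p}$ and $m=k\exp X$ with $X\in\mathfrak{p}\cap\mathfrak{m}_I$: normalizing $\mathfrak{k}_I$ forces $e^{\ad X}$ to preserve $\mathfrak{k}_I$, and self-adjointness of $\ad X$ forces $\ad X$ to preserve $\mathfrak{k}_I$, whence $[X,\mathfrak{k}_I]\subseteq\mathfrak{p}\cap\mathfrak{k}_I=0$; compactness of the center of $M_I$ then rules out a nonzero such central $X$, so $m=k\in K_I$. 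This compact-center input is the step I expect to be the main obstacle, as it is precisely what prevents the split directions $\exp(\mathfrak{a}_I^{\perp})$ from entering the normalizer.

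Finally, for the conjugacy statement, if $\Ad_g\mathfrak{h}_I=\mathfrak{h}_J$ then $\Ad_g$ carries $\overline{\mathfrak{n}}_I=\operatorname{nilrad}(\mathfrak{h}_I)\cap[\mathfrak{g},\mathfrak{g}]$ onto $\overline{\mathfrak{n}}_J$; applying $\theta$ as before yields $\Ad_{\theta(g)}\mathfrak{n}_I=\mathfrak{n}_J$, so $\mathfrak{n}_I$ and $\mathfrak{n}_J$ are conjugate, and Tits' theorem (Theorem~\ref{thm-tits-theorem}) gives $I=J$.
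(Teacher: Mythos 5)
There is a genuine gap, and it sits precisely at the step on which everything else in your argument depends: the claimed intrinsic description $\overline{\lie{n}}_I = \operatorname{nilrad}(\lie{h}_I)\cap[\lie{g},\lie{g}]$ is false. The faulty inference is that an element of $\lie{z}(\lie{k}_I)$ which centralizes $\overline{\lie{n}}_I$ and $\lie{n}_I$ must centralize ``all of $\lie{g}$'': to be central it would also have to act trivially on $\lie{m}_I\cap\lie{p}$, and nothing forces that, since $\lie{k}_I+\lie{n}_I+\overline{\lie{n}}_I$ need not generate $\lie{g}$. The simplest counterexample is $I=\Sigma$, $G=SL(2,\R)$: there $\lie{h}_\Sigma=\lie{k}=\lie{so}(2)$ is abelian, so $\operatorname{nilrad}(\lie{h}_\Sigma)=\lie{so}(2)$, and since $[\lie{g},\lie{g}]=\lie{g}$ your right-hand side is $\lie{so}(2)\neq 0=\overline{\lie{n}}_\Sigma$. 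For $I=\Sigma$ this is harmless (there $\overline{P}_\Sigma=G$, so the reduction is vacuous), but the same failure occurs where it does real damage: take $G=SL(2,\R)\times SL(2,\R)$ and $I=\{\alpha_1\}$, the simple root of the first factor. Then $\lie{k}_I=\lie{so}(2)\times 0$ and $\overline{\lie{n}}_I=0\times\overline{\lie{n}}$ lie in different simple factors, so $\lie{h}_I$ is \emph{abelian}, $\operatorname{nilrad}(\lie{h}_I)\cap[\lie{g},\lie{g}]=\lie{h}_I$, and the statement ``$\Ad_g$ preserves this set'' is just your hypothesis restated. Moore's theorem (Theorem~\ref{thm-normalizer-thm-of-moore}) can no longer be invoked to land in $\overline{P}_I$, and the conjugacy argument, which feeds $\Ad_g\overline{\lie{n}}_I=\overline{\lie{n}}_J$ into Tits' theorem (Theorem~\ref{thm-tits-theorem}), breaks for the same reason.

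The underlying issue is that membership in a nilpotent ideal of $\lie{h}_I$ is an \emph{internal} condition: it only sees how $\lie{k}_I$ acts on $\lie{h}_I$ itself, so central torus directions of $\lie{k}_I$ that happen to commute with $\overline{\lie{n}}_I$ slip through. The paper's Lemma~\ref{lem-nilpotent-computation} uses the \emph{ambient} condition that $\ad_X\colon\lie{g}\to\lie{g}$ be nilpotent: writing $X=Y+Z\in\lie{k}_I\oplus\overline{\lie{n}}_I$, the diagonal part of $\ad_X^k$ relative to the $\lie{a}_I$-root space decomposition is $\ad_Y^k$, and skew-symmetry of $\ad_Y$ for the inner product $B(\cdot\,,\theta(\cdot))$ then forces $\ad_Y=0$ on all of $\lie{g}$ --- no generation claim is needed, and the $\lie{k}_I$-component is killed except for $\lie{z}(\lie{g})\cap\lie{k}_I$. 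With that one substitution your proof goes through, and the remainder of it --- the reverse inclusion $A_IH_I\subseteq\operatorname{Normalizer}_G(\lie{h}_I)$, the reduction inside $\overline{P}_I$ to the fact that $\operatorname{Normalizer}_{M_I}(\lie{k}_I)=K_I$ (which you prove via the Cartan decomposition rather than cite, a worthwhile addition), and the appeal to Tits for $I=J$ --- is essentially the paper's argument.
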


We shall prove this using the following Lie algebra computation: 

\begin{lemma}
\label{lem-nilpotent-computation}
If $I$ is any subset of $\Sigma$, then 
\[
 \{\, X\in \lie{h}_I : \text{\rm $\ad_X\colon \lie{g}\to \lie{g}$ is nilpotent}\,\} =(\lie{k}_I \cap \lie{z}(\lie{g})) \oplus \overline{\lie{n}}_I,
\]
where $\lie{z}(\lie{g})$ is the center of $\lie{g}$.
\end{lemma}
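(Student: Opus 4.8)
\emph{Proof proposal.} The plan is to reduce the lemma to a single equivalence and then establish that equivalence by a grading argument. Using the semidirect decomposition $\lie{h}_I = \lie{k}_I \oplus \overline{\lie{n}}_I$ of Proposition~\ref{prop:h_t_structure}, I write each $X \in \lie{h}_I$ uniquely as $X = X_k + X_n$ with $X_k \in \lie{k}_I$ and $X_n \in \overline{\lie{n}}_I$. I claim the lemma is equivalent to the assertion that $\ad_X$ is nilpotent if and only if $X_k \in \lie{z}(\lie{g})$. Granting this, the nilpotent set is exactly $\{\,X : X_k \in \lie{k}_I \cap \lie{z}(\lie{g})\,\} = (\lie{k}_I \cap \lie{z}(\lie{g})) \oplus \overline{\lie{n}}_I$, since $X_n$ is then unconstrained.

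To set up the grading, I would choose $H_1 \in \lie{a}_I$ with $\alpha(H_1) > 0$ for all $\alpha \in \Sigma \setminus I$ (such $H_1$ exists because the roots in $\Sigma\setminus I$ restrict to linearly independent functionals on $\lie{a}_I$), and decompose $\lie{g} = \bigoplus_d \lie{g}^{(d)}$ into $\ad_{H_1}$-eigenspaces. Since a positive root $\gamma$ satisfies $\gamma[\lie{a}_I] = 0$ precisely when it is supported on $I$, one checks that $\gamma(H_1) = 0$ in that case and $\gamma(H_1) > 0$ otherwise; hence $\lie{k}_I \subseteq \lie{g}^{(0)}$, while $\overline{\lie{n}}_I$ lies in strictly negative degrees. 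Thus $\ad_{X_k}$ preserves the grading and $\ad_{X_n}$ strictly lowers it.

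The key step is then purely linear-algebraic: ordering a basis of $\lie{g}$ by increasing degree makes $\ad_X = \ad_{X_k} + \ad_{X_n}$ block lower-triangular, with diagonal blocks the restrictions of $\ad_{X_k}$ to the $\lie{g}^{(d)}$. Consequently $\ad_X$ and $\ad_{X_k}$ share the same characteristic polynomial, so $\ad_X$ is nilpotent if and only if $\ad_{X_k}$ is. (In particular this already yields the inclusion $\supseteq$: if $X_k$ is central then $\ad_{X_k} = 0$, and $\ad_X = \ad_{X_n}$ is nilpotent because $X_n$ strictly lowers the grading.) To finish, I would show that for $X_k \in \lie{k}_I \subseteq \lie{k}$, nilpotence of $\ad_{X_k}$ forces $X_k \in \lie{z}(\lie{g})$: the group $\Ad(K) \subseteq GL(\lie{g})$ is compact and hence preserves some inner product, with respect to which $\ad_{X_k}$ is skew-symmetric and therefore semisimple; a semisimple nilpotent operator vanishes, so $\ad_{X_k} = 0$, i.e.\ $X_k \in \lie{z}(\lie{g})$.

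The main obstacle is the control of the degree-zero part of the inhomogeneous element $X$: a priori $\ad_X$ could be nilpotent through cancellation between $\ad_{X_k}$ and $\ad_{X_n}$. The triangularity provided by the $\ad_{H_1}$-grading is exactly what rules this out, reducing nilpotence of $\ad_X$ to nilpotence of its semisimple ingredient $\ad_{X_k}$; everything else is routine.
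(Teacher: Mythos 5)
Your proposal is correct and takes essentially the same route as the paper's proof: the paper also splits $X = Y + Z \in \lie{k}_I \oplus \overline{\lie{n}}_I$, uses the $\lie{a}_I$-weight decomposition of $\lie{g}$ (your $\ad_{H_1}$-grading by a generic $H_1 \in \lie{a}_I$ is just a one-parameter version of this) to conclude that nilpotence of $\ad_X$ forces nilpotence of $\ad_Y$, and then uses skew-symmetry of $\ad_Y$ to deduce $\ad_Y = 0$. The only cosmetic difference is the source of the inner product making $\ad_Y$ skew-symmetric: the paper uses the given form $B(\cdot\,,\theta(\cdot))$, while you produce an invariant inner product from compactness of $\Ad(K)$; both are fine.
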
 

\begin{proof}
Let  $X =Y {+} Z \in \lie{k}_I \oplus \overline{\lie{n}}_I$ and suppose  that $\ad_X\colon \lie{g}\to \lie{g}$ is nilpotent, say $\ad_X^k{=}0$. 
If we write $\lie{g}$ as a direct sum of $\lie{a}_I$-root spaces (meaning eigenspaces for the adjoint action of $\lie{a}_I$), then the diagonal part of $\ad_X^k$  for this direct sum decomposition is $\ad_Y^k$.  So $\ad_Y^k{=}0$.  But $\ad_Y$ acts as a skew-symmetric operator on $\lie{g}$ for the inner product $\langle \underbar{\,\,}\,,\underbar{\,\,} \rangle = B(\underbar{\,\,}\,,\theta(\underbar{\,\,}))$  constructed from the bilinear form in \ref{sec-parabolic-subgroups}(ii) and the Cartan involution. So in fact $\ad_Y{=}0$.
\end{proof}

\begin{proof}[Proof of Theorem~\ref{thm-normalizer-of-h-i}]  
It follows from Lemma~\ref{lem-nilpotent-computation} that 
\[
\overline{\lie{n}}_I = [\lie{g}, \lie{g}]
\,\cap\, \{\, X\in \lie{h}_I : \text{$\ad_Z\colon \lie{g}\to \lie{g}$ is nilpotent}\,\}.
\]
and therefore 
\[
\operatorname{Normalizer}_G(\lie{h}_I)
\subseteq 
\operatorname{Normalizer}_G(\overline{\lie{n}}_I).
\]
But according to Theorem~\ref{thm-normalizer-thm-of-moore}, 
\[\operatorname{Normalizer}_G(\overline{\lie{n}}_I)
= \overline{P}_I = \theta[ P_I].
\]
So  to prove the first assertion in the statement of the theorem it suffices to prove that  $\operatorname{Normalizer}_{\overline{P}_I}(\lie{h}_I) = A_IH_I$.

Let $g=ma\overline n$ be an element of  $\overline P_I=M_IA_I\overline N_I$ that normalizes $\lie{h}_I$.  Since $a$ and $\overline n$ normalize $  { \lie {h}_I}$, the element $m$ must normalize $\lie{h}_I$, too.  Since $\lie{k}_I =  \lie{m}_I\cap \lie {h}_I$,   the element $m$ must normalize not only $ \lie{h}_I$ but $\lie{k}_I$. But $K_I$ is a maximal compact subgroup in $M_I$, and the normalizer of $K_I$ in $M_I$ is $K_I$ itself.    So $m\in K_I$ and therefore $g=ma\overline{n}\in A_IH_I$, as required.

The second assertion is proved in the same way: an element of $G$ that conjugates $\lie{h}_I$ into $\lie{h}_J$ must conjugate $\overline{n}_I$ into $\overline{n}_J$, and from here we may apply Theorem~\ref{thm-tits-theorem}.
\end{proof}

\begin{remark}
    It follows from the second assertion in the theorem   that $H_I$ and $H_J$ are conjugate if and only if $I {=} J$. Moreover the theorem  also implies that if $t,t'\in\R^\Sigma$, then $\lie{h}_t$ and $\lie{h}_{t'}$ are conjugate  if and only if for every $\alpha \in \Sigma$,
\begin{equation}\label{eq: conjugate of h_t}
    t_\alpha \ne 0\quad \Leftrightarrow \quad t'_\alpha \ne 0,
\end{equation}
 and that $H_t$ and $H_{t'}$ are conjugate if and only if \eqref{eq: conjugate of h_t} holds.
\end{remark}

\begin{definition}
Let  $\pmb{\mathfrak{g}} = \mathfrak{g} {\times } \R^\Sigma$ be  the  constant vector bundle over $\R^\Sigma$ with fiber $\mathfrak{g}$, and denote by $\pmb{\mathfrak{h}}$ the smooth vector subbundle of  $\pmb{\mathfrak{g}}$ whose fiber over $t\in \R^\Sigma$ is    the Lie subalgebra  $\mathfrak{h}_t$. Thus 
$\pmb{\mathfrak{h}} = \{ \, (X,t) \in \pmb{\mathfrak{g}} : X \in \mathfrak{h}_t\,\}$.
\end{definition}

\begin{proof}[Proof of Theorem~\ref{thm:H}]
As in the statement of the theorem, denote  by $\bigH\subseteq \bigG$ the  closure of the subset 
\[
\{\, (h,t) :h \in H_t,\,\,\,  t \in \R^\Sigma_\times \,\} \subseteq \bigG  
\]
We shall temporarily denote by $H'_t \subseteq G$ the fiber of $\bigH$ over $t\in \R^\Sigma$.  It is a closed subgroup of $G$. In fact, using the split decomposition $G=M_\Sigma A_\Sigma$, every $H'_t$ is a subgroup of $M_\Sigma$.

Our first objective is to show that $H'_t$  is equal to the subgroup $H_t$ from Definition~\ref{def-of-all-h-t-groups}. 
Fix $t_0\in \R^\Sigma$. Every element of $H_{t_0}$ may be written as a product 
\begin{equation}
    \label{eq-expression-for-h-in-h}
h = m \exp(X_1)\cdots \exp (X_k),
\end{equation}
for some $m\in M$ and some elements $X_1,\dots, X_k\in \lie{h}_{t_0}$. By extending the $X_j$ to smooth sections of $\pmb{\lie{h}}$, we find that there  is a smooth section 
\begin{equation}
    \label{eq-smooth-section-of-h-bundle}
\begin{gathered}
\R^\sigma \longrightarrow  \bigG
\\
t \longmapsto  m \exp(X_{1,t})\cdots \exp (X_{k,t})
\end{gathered}
\end{equation}
whose value at every $t$ lies in $H_t$, and whose value at $t_0$ is $h$.  This shows that $H_{t_0}\subseteq H'_{t_0}$.

To prove that $H'_t\subseteq H_t$, for every $t\in \R^\Sigma$, observe that by continuity of the adjoint action, $H'_t$ certainly lies within the normalizer of $\lie{h}_t$, for every $t$. And therefore it lies within $A_I H_t$, for $I = \{ \, \alpha \in \Sigma: t_\alpha \ne 0\,\}$, and so in $M_\Sigma \cap A_I H_t$.

Now, the spectrum of every element of $K$, viewed as a linear operator on the finite-dimensional complex vector space $\lie{g}_{\C}$ via the adjoint action, lies within the unit circle in $\C$.  Since the spectrum is conjugation invariant, the same is true of any element of $H_t$, for any $t\in \R^\Sigma_\times$, and since in finite dimensions the spectrum varies continuously with the operator, the same is also true of any element in any fiber $H'_t$ of $\bigH$. But the only elements of $M_\Sigma \cap A_I H_t$ whose  spectrum is included in the unit circle are those in the subgroup $H_t$.   Therefore $H'_t \subseteq H_t$, as required. 

It remains to prove that $\bigH$ is a smooth submanifold of $\bigG$.  The exponential map  
$\exp \colon \pmb{\lie{g}}\!\to\!   \bigG$ 
is a diffeomorphism from a neighborhood of the zero section in $\pmb{\lie{g}}$ to a neighborhood of the group-identity  section in $\bigG$. It restricts to a homeomorphism from an open neighborhood of the zero section in $\pmb{\lie{h}}$ to an open neighborhood of the group-identity  section in $\bigH$. So the latter  neighborhood is a smooth submanifold of $\bigG$. Pointwise multiplication by the sections \eqref{eq-smooth-section-of-h-bundle} then shows that any point of $\bigH$ has an open neighborhood in $\bigH$ that is a smooth submanifold of $\bigG$.
\end{proof}

 \begin{example} 
\label{ex-sl2-part-zero}
Let $G=SL(2,\R)$.
Equip $G$ with  the  Iwasawa decomposition  for  which 
     \[
     K = SO(2),\quad 
     A = \left \{\, 
     \left [ \begin{smallmatrix} e^x & 0 \\ 0 & e^{-x}\end{smallmatrix}
     \right ]
     : x\in \R
     \,\right \} \quad \text{and} \quad 
     N = \left \{\, 
     \left [ \begin{smallmatrix} 1 & y \\ 0 &1\end{smallmatrix}
     \right ]
     : y \in \R
     \,\right \}.
     \]
There  is just one simple restricted root, which is $\alpha \colon \left [ \begin{smallmatrix} x & 0 \\ 0 & -x\end{smallmatrix}\right ] \mapsto 2x$. If we    identify $\R^\Sigma$ with $\R$ accordingly, then 
\[
a_t = \left [ \begin{smallmatrix} t^{1/2} & 0 \\ 0 & t^{-1/2} \end{smallmatrix}
     \right ]
\quad \text{and} \quad 
H_t = \left \{\, 
      \left [ \begin{smallmatrix} c & -t^2s \\ s\strut & c \end{smallmatrix}
     \right ]
     : c^2 + t^2s^2 =1  
     \,\right \}.
\]
\end{example}

\subsection{Construction of the Oshima space}
\label{sec-action-of-a}

We first assemble the family of homogeneous spaces $\{\, G/H_t: t\in \R^\Sigma\,\} $  into a single smooth manifold $\bigG/\bigH$ using following well-known result:

\begin{lemma}[Godement's criterion; see for instance {\cite[Sec.\,4.9]{GodementLieGroups}}]
\label{lem-godement}
Let $W$ be a smooth manifold and let $R\subseteq W{\times} W$ be an equivalence relation.  There is a smooth manifold structure on $W/R$ for which the quotient map $W\to W/R$ is a smooth submersion if and only if 
\begin{enumerate}[\rm (i)]

\item $R$ is a closed subset of  $W{\times} W$ and a smooth embedded  submanifold, and 

\item the projection map $R\to W$ onto either factor of $W$ in $W{\times}W$ is a smooth submersion. 

\end{enumerate}
\end{lemma}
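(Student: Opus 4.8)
The plan is to prove the two implications separately. The forward (necessity) direction is formal. Assume $W/R$ carries a smooth structure for which $\pi\colon W\to W/R$ is a submersion. Then $\pi$ is transverse to itself, so the fibered product $R = W\times_{W/R} W = (\pi\times\pi)^{-1}(\Delta)$, where $\Delta\subseteq W/R\times W/R$ is the diagonal, is a smooth embedded submanifold; it is closed because $W/R$ is Hausdorff and hence $\Delta$ is closed. This gives (i). Moreover, in the fibered-product square each projection $\operatorname{pr}_i\colon R\to W$ is the base change of the submersion $\pi$ along $\pi$, and a pullback of a submersion is a submersion, which gives (ii).

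The substance is the converse. Write $n=\dim W$, $d=\dim R$, and $k=d-n$. Since $\operatorname{pr}_1\colon R\to W$ is a surjective submersion, its fibres have dimension $k$, and the fibre over $a$ is $\{a\}\times[a]$; so each equivalence class $[a]$ is a $k$-dimensional submanifold and I expect $W/R$ to have dimension $m=n-k$. Fix $a\in W$ and choose a submanifold $S\subseteq W$ with $a\in S$, $\dim S=m$, and $T_aW = T_aS\oplus T_a[a]$, i.e. $S$ transverse to the class of $a$. The goal is to promote $S$ to a chart for $W/R$ near $\pi(a)$. Set $R_S = R\cap\operatorname{pr}_2^{-1}(S)$; because $\operatorname{pr}_2$ is a submersion and $S$ has codimension $k$, the set $R_S$ is a submanifold of $R$ of dimension $d-k=n$, and I will analyse the smooth map $F=\operatorname{pr}_1\colon R_S\to W$ between manifolds of equal dimension $n$.

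The crux is to show that $F$ is a local diffeomorphism at the unit point $(a,a)$, by a tangent-space computation. Reflexivity of $R$ means the diagonal of $W$ lies in $R$, so $\{(w,w):w\in T_aW\}\subseteq T_{(a,a)}R$; and $\ker(d\operatorname{pr}_1)\cap T_{(a,a)}R = \{0\}\times T_a[a]$. Given $w\in T_aW$, decompose $w=w_S+w_L$ along $T_aS\oplus T_a[a]$; then $(w,w)-(0,w_L)=(w,w_S)\in T_{(a,a)}R$ with $w_S\in T_aS$, so $(w,w_S)\in T_{(a,a)}R_S$ and $dF(w,w_S)=w$. Hence $dF$ is surjective, and by equality of dimensions it is an isomorphism. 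By the inverse function theorem, $F$ restricts to a diffeomorphism from a neighbourhood of $(a,a)$ in $R_S$ onto a neighbourhood $U$ of $a$ in $W$, whose inverse has the form $x\mapsto(x,\phi(x))$ with $\phi\colon U\to S$ smooth, $\phi(x)\sim x$, and $\phi=\mathrm{id}$ near $a$ on $S$. Thus $\phi$ is a smooth retraction sending each point of $U$ to the locally unique equivalent point of $S$; it is the local expression of $\pi$ in the would-be chart $S$. The retractions $\phi$ attached to different base points furnish the chart transitions, which are smooth because the $\phi$ are, so one obtains a smooth atlas on $W/R$ and $\pi$ becomes a submersion (it is $\phi$ in charts).

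Two points remain, and both use that $R$ is \emph{closed}. First, to know $S$ injects into $W/R$ I must shrink $S$ so that no two of its points are equivalent: the local diffeomorphism only gives uniqueness inside $U$, and closedness of $R$ is what rules out identifications between distant points. Second, the quotient topology is Hausdorff precisely because $R$ is closed and the projections are open (submersions are open maps), with second countability inherited from $W$. I expect the main obstacle to be exactly the slice step --- choosing the transversal $S$ and verifying through the tangent-space computation above that $F=\operatorname{pr}_1|_{R_S}$ is a local diffeomorphism --- since Hausdorffness, chart compatibility, and the submersion property all follow once the smooth local retraction $\phi$ onto $S$ is in hand.
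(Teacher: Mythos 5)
The paper gives no proof of this lemma---it is quoted from Godement's book---so your proposal can only be measured against the standard slice argument, which it largely follows. Your forward direction is correct, and the crux of the converse---the tangent computation showing that $F=\operatorname{pr}_1\colon R_S\to W$ is a local diffeomorphism at the diagonal point $(a,a)$---is also correct. The genuine gap is at the chart-compatibility step. You claim the transitions ``are smooth because the $\phi$ are,'' but $\phi_b$ is defined only on a neighbourhood $U_b$ of $b$ in $W$, whereas the chart domains $\pi(S_a)$ and $\pi(S_b)$ overlap as soon as \emph{some} $s\in S_a$ is equivalent to \emph{some} $s'\in S_b$, and equivalent points can be arbitrarily far apart in $W$. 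Near such an overlap point, neither $\phi_a$ nor $\phi_b$ computes the transition $s\mapsto s'$, so as written this step fails; no amount of composing the local retractions you have constructed will reach across the relation between distant points.

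The missing idea is to run your tangent computation at \emph{non-diagonal} points of $R_{S_b}$. At $(x,y)\in R_{S_b}$ one has
\[
\ker\bigl(d\operatorname{pr}_1\bigr)\cap T_{(x,y)}R_{S_b}
=\{0\}\times\bigl(T_y[y]\cap T_yS_b\bigr),
\]
where $[y]$ is the class through $y$; note this condition depends only on $y$, not on $x$. Hence $\operatorname{pr}_1\colon R_{S_b}\to W$ is a local diffeomorphism at $(x,y)$ exactly when $S_b$ is transverse to the class $[y]$ at $y$. Since $w\mapsto T_w[w]$ is a continuous distribution on $W$ (it is the image under $d\operatorname{pr}_2$ of the subbundle $\ker d\operatorname{pr}_1\subseteq TR$ restricted to the diagonal), trivial intersection with $TS_b$ is an open condition on $y\in S_b$ that holds at $b$; so after shrinking, $S_b$ is transverse to every class it meets, $\operatorname{pr}_1\colon R_{S_b}\to W$ is a local diffeomorphism everywhere, and $\operatorname{pr}_2\circ(\operatorname{pr}_1|_{R_{S_b}})^{-1}$, defined near $s$, is a smooth map sending $s$ to the unique nearby point of $S_b$ equivalent to it. That is the transition map, and with this inserted your proof closes up. A secondary correction: your appeal to closedness of $R$ for shrinking $S$ to inject into $W/R$ is misplaced. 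If $x_i\neq y_i$ are equivalent points of $S$ converging to $a$, then $(x_i,y_i)\to(a,a)$ in $W\times W$, and since $(a,a)\in R$ and $R$ is \emph{embedded}, this is convergence in $R_S$, contradicting injectivity of $F$ near $(a,a)$; it is embeddedness, not closedness, that does the work there. Closedness is used exactly once, where you put it second: an open quotient map has Hausdorff image if and only if the relation is closed.
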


\begin{definition} 
We shall  denote by   $\bigG/\bigH$ the quotient of $\bigG= G{\times}\R^\Sigma$ by the equivalence relation
\[
(g_1,t_2) \sim (g_2,t_2) 
\quad \Leftrightarrow \quad 
t_1=t_2\quad \&\quad 
g_2^{-1}g_1 \in H_{t_1},
\]
and we shall denote by $[g,t]\in \bigG/\bigH$ the equivalence class of $(g,t)\in \bigH$. 
\end{definition}

By Godement's criterion, the quotient $\bigG/\bigH$ carries a unique smooth manifold structure for which the quotient map from $\bigG$  is a submersion.  Oshima's space is obtained from $\bigG/\bigH$ by a further quotient an action of the group $A$.

Thanks to Lemma \ref{lem-h-conjugation},
the subgroups $H_t$ have the following conjugation equivariance property: if $a\in A$ and $t\in\R^\Sigma$, then
\begin{equation}
\label{eq:H-conjugation}
    H_{a\cdot t} = \Ad_a [H_t].
\end{equation}
In other words, for any $a\in A$  we have
\begin{equation}
    \label{eq:H-A-action}
    (h,t) \in \bigH \quad\Rightarrow\quad (a^{-1}ha,a^{-1}\cdot t) \in \bigH.
\end{equation}
From this it follows that the right action of $A$ on $\bigG$ given by
\begin{equation}
    \label{eq:Oshima-A-action}
    (g,t)\cdot a = (ga,a^{-1}t), \qquad (a\in A, \ [g,t] \in \bigG/\bigH)
\end{equation}
descends to a smooth right action of $A$ on $\bigG/\bigH$. 

\begin{definition}
The \emph{Oshima space} of the real reductive group $G$ (with the chosen Iwasawa decomposition $KAN$) is the quotient space 
\begin{equation}
    \Oshima = (\bigG/\bigH)/A.
\end{equation}
We shall denote by $\ldoublebracket g,t\rdoublebracket\in \Oshima$ the class in $\Oshima$ determined by a point $[g,t]\in \bigG/\bigH$.
\end{definition}
The following two lemmas generalize the Iwasawa decomposition $NAK=G$ to a fiberwise product decomposition $NA\bigH \hookrightarrow \bigG$ of an open subset of $\bigG$. 

\begin{lemma} 
\label{lem-trivial-intersection}
 For every $t\in \R^\Sigma$,   $H_t\cap AN$ is the trivial subgroup of $G$.
\end{lemma}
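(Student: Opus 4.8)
The plan is to reduce to the model subgroups $H_I$ and then exploit the uniqueness of the Iwasawa decomposition. First I would pass from a general $t$ to the representative $t_I$, where $t_I$ has entries $\pm 1$ for $\alpha$ in the support $I = \{\alpha\in\Sigma : t_\alpha \ne 0\}$ and $0$ for $\alpha \notin I$, so that $H_{t_I} = H_I$. Since the simple roots are linearly independent functionals on $\mathfrak{a}$, the map $a \mapsto (a^\alpha)_{\alpha\in\Sigma}$ is a surjection $A \to \R^\Sigma_+$; hence there is $a\in A$ with $a^\alpha = |t_\alpha|^{-1}$ for $\alpha\in I$, giving $a\cdot t = t_I$. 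By the conjugation equivariance \eqref{eq:H-conjugation} we then have $\Ad_a[H_t] = H_{a\cdot t} = H_I$. Because $a\in A$ normalizes $AN$ (it is abelian and normalizes $N$), conjugation by $a$ carries $H_t\cap AN$ bijectively onto $H_I\cap AN$. So it suffices to prove that $H_I\cap AN = \{e\}$ for every $I\subseteq\Sigma$.

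For this model case I would use the semidirect product decomposition $H_I = K_I\ltimes\overline N_I$ recorded after Definition~\ref{def-of-all-h-t-groups}, together with three structural facts about the standard parabolic $P_I = M_IA_IN_I$ and its opposite $\overline P_I = \theta[P_I] = M_IA_I\overline N_I$: that $K_I \subseteq M_I \subseteq P_I$; that $AN \subseteq P_{\min} \subseteq P_I$ (since $\emptyset\subseteq I$); and that $\overline N_I$, being the unipotent radical of the opposite parabolic, satisfies $\overline N_I\cap P_I = \{e\}$. This last identity is exactly the injectivity of the multiplication map $\overline N_I\times P_I \to G$, i.e.\ the uniqueness of the big-cell decomposition. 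Given $g\in H_I\cap AN$, write $g = k\bar n$ with $k\in K_I$ and $\bar n\in\overline N_I$. Then $\bar n = k^{-1}g$ is a product of $k^{-1}\in K_I\subseteq P_I$ and $g\in AN\subseteq P_I$, so $\bar n\in\overline N_I\cap P_I = \{e\}$. Hence $g = k\in K_I\subseteq K$, and the uniqueness of the Iwasawa decomposition $G = KAN$ yields $K\cap AN = \{e\}$, so $g = e$.

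The step requiring care — and the reason I would not argue directly for general $t$ — is the final one. For a general $t$ of support $I$ the subgroup $H_t$ again splits as $K_t\ltimes\overline N_I$ with $K_t\subseteq M_I$, and the same reasoning forces $g\in K_t$; but $K_t$ is generated by elements of the shape $t^{2\gamma}X + \theta(X)$ and is \emph{not} contained in $K$ unless $t^{2\gamma}=1$ throughout, so the clean conclusion $K\cap AN = \{e\}$ is not available before the reduction. Passing first to the representative $t_I$, for which $K_I\subseteq K$, is precisely what legitimizes the closing appeal to Iwasawa uniqueness. Apart from this, the only nontrivial inputs are the injectivity of the big-cell map $\overline N_I\times P_I\to G$ and the uniqueness of $G = KAN$; everything else is bookkeeping with the inclusions $K_I\subseteq M_I$ and $AN\subseteq P_{\min}\subseteq P_I$.
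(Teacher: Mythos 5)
Your proof is correct, and its core is genuinely different from the paper's. Both arguments begin with the same reduction: conjugate $H_t$ to $H_I$ by an element $a\in A$ (which normalizes $AN$), a step the paper leaves implicit in the sentence ``it suffices to prove the lemma when $H_t = H_I$'' and which you spell out carefully. From there the proofs diverge in how they kill the two factors of $H_I = K_I\ltimes\overline{N}_I$, and in which order. You dispose of the $\overline{N}_I$-factor first, via the standard big-cell fact $\overline{N}_I\cap P_I=\{e\}$ (using the inclusions $K_I\subseteq M_I\subseteq P_I$ and $AN\subseteq P_{\min}\subseteq P_I$), and then the $K_I$-factor via Iwasawa uniqueness $K\cap AN=\{e\}$. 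The paper instead disposes of the $K_I$-factor first, by a dynamical argument: choosing $X\in\mathfrak{a}_I$ whose adjoint action contracts $\overline{\mathfrak{n}}_I$, conjugating $k_I\overline{n}_I$ by $\exp(jX)$ (which commutes with $K_I$ and preserves the closed set $AN$) and letting $j\to+\infty$ shows $k_I\in AN$; since $AN$ has no nontrivial compact subgroup, $k_I=e$; this leaves $H_I\cap AN=\overline{N}_I\cap AN\subseteq\overline{N}\cap AN=\{e\}$. What each buys: your route leans on citable structure theory for general parabolic subgroups (injectivity of $\overline{N}_I\times P_I\to G$, as in Knapp), so it is short and essentially bookkeeping; the paper's contraction argument is more self-contained, since the one external fact it invokes, $\overline{N}\cap AN=\{e\}$, can be proved by the very same contraction trick, as the paper remarks. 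Your closing observation about why the reduction to $t_I$ is needed --- for general $t$ the factor $K_t$ is not contained in $K$, so the appeal to Iwasawa uniqueness would be illegitimate without first conjugating --- is accurate, and it reflects the same role the reduction plays in the paper's own proof.
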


\begin{proof} 
It suffices to prove the lemma when $H_t = H_I$, for some $I\subseteq \Sigma$. 
In this case there exists $X\in \mathfrak{a}_I$ such that, if $ \overline n \in \overline{\mathfrak{n}}_I$, then 
\[
\lim_{j\to +\infty} \exp(jX) \overline{n} \exp (-jX)  = e,
\]
while of course $\exp(jX)\in A_I$ commutes with $K_I$.  Since $\exp(X)$ conjugates $AN$ into itself, it follows that 
\[
k_I\overline{n}_I \in H_I\cap AN \quad \Rightarrow \quad k_I\in AN.
\]
Since $AN$ has no nontrivial compact subgroup, it follows that 
\[
H_I\cap AN = \overline{N}_I\cap AN .
\]
But $\overline{N}_I\subseteq \overline{N}$, and $\overline{N}\cap AN$ is well known to be trivial (this fact is also readily proved by the  method above).
\end{proof}

\begin{lemma}
    \label{lem:NAH}
    The multiplication map
    \begin{gather*}
      N   \times A \times \bigH   \longrightarrow \bigG   
      \\
      (n,a,(h,t)) \longmapsto (nah, t) 
    \end{gather*}
     is a diffeomorphism onto an open subset of $\bigG$.
\end{lemma}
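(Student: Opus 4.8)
The map is fibered over $\R^\Sigma$: it carries the $\R^\Sigma$-coordinate of a point $(n,a,(h,t))$ to $t$, and over each fixed $t$ it restricts to the multiplication map
\[
\mu_t\colon N\times A\times H_t \longrightarrow G,\qquad (n,a,h)\longmapsto nah .
\]
The plan is to prove that each $\mu_t$ is an injective local diffeomorphism, hence a diffeomorphism onto an open subset of $G$, and then to assemble these over $\R^\Sigma$ using the fact from Theorem~\ref{thm:H} that $\bigH\to\R^\Sigma$ is a submersion with fibers $H_t$. The conclusion will be that the full map is an injective local diffeomorphism between manifolds of equal dimension $\dim G + |\Sigma|$, and any such map is automatically a diffeomorphism onto an open subset.

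For injectivity I would fix $t$ and suppose $n_1a_1h_1=n_2a_2h_2$ with $n_i\in N$, $a_i\in A$, $h_i\in H_t$. Then $h_1h_2^{-1}=a_1^{-1}(n_1^{-1}n_2)a_2$, and since $A$ normalizes $N$ this element lies in $AN$; as $H_t$ is a group, $h_1h_2^{-1}\in H_t\cap AN$, which is trivial by Lemma~\ref{lem-trivial-intersection}. Hence $h_1=h_2$, so $n_1a_1=n_2a_2$, and the triviality of $N\cap A$ then forces $n_1=n_2$ and $a_1=a_2$.

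To see that each $\mu_t$ is a local diffeomorphism, I would compute its differential. After right-translating tangent spaces into $\mathfrak g$, the image of $d\mu_t$ at $(n,a,h)$ is $\Ad_n\mathfrak n+\Ad_{na}\mathfrak a+\Ad_{nah}\mathfrak h_t$; applying the automorphism $\Ad_{n^{-1}}$, using $\Ad_a\mathfrak a=\mathfrak a$ and the fact that $H_t=MH_{0,t}$ normalizes its own Lie algebra $\mathfrak h_t$, reduces surjectivity of $d\mu_t$ to the single identity $\mathfrak g=\mathfrak n+\mathfrak a+\mathfrak h_{a\cdot t}$. It therefore suffices to prove $\mathfrak g=\mathfrak n+\mathfrak a+\mathfrak h_s$ for every $s\in\R^\Sigma$. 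From Definition~\ref{def-h-t}, $\mathfrak h_s\supseteq\mathfrak m$ and contains $s^{2\gamma}X+\theta(X)$ for every $X\in\mathfrak g_\gamma$, $\gamma\in\Delta^+$; since $\mathfrak g_\gamma\subseteq\mathfrak n$, reducing modulo $\mathfrak n$ shows $\theta(X)\in\mathfrak n+\mathfrak h_s$, whence $\theta[\mathfrak n]\subseteq\mathfrak n+\mathfrak h_s$. As $\mathfrak g=\mathfrak m\oplus\mathfrak a\oplus\mathfrak n\oplus\theta[\mathfrak n]$ and $\mathfrak m\subseteq\mathfrak h_s$, this gives all of $\mathfrak g$. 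A dimension count, using that each summand $\{s^{2\gamma}X+\theta X\}$ has dimension $\dim\mathfrak g_\gamma$ so that $\dim\mathfrak h_s=\dim\mathfrak m+\dim\mathfrak n$, shows $\dim\mathfrak n+\dim\mathfrak a+\dim\mathfrak h_s=\dim\mathfrak g$, so the sum is direct and $d\mu_t$ is an isomorphism. (For $s\equiv\pm1$ this identity is exactly the Iwasawa decomposition $\mathfrak g=\mathfrak n\oplus\mathfrak a\oplus\mathfrak k$.)

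Finally, to assemble the fiber maps, I would observe that the $\R^\Sigma$-component of the differential of the full map is the differential of $\bigH\to\R^\Sigma$, which is surjective by Theorem~\ref{thm:H}. Hence any kernel vector of the full differential is vertical over $\R^\Sigma$, so it lies in the domain of some $d\mu_t$; injectivity of $d\mu_t$ forces it to vanish, and the full differential is everywhere an isomorphism. Combined with the injectivity established above, this shows the map is a diffeomorphism onto an open subset of $\bigG$. The step I expect to be the main obstacle is confirming that the transversality identity $\mathfrak g=\mathfrak n\oplus\mathfrak a\oplus\mathfrak h_s$ is uniform over all $s$, including the boundary values with vanishing coordinates: there the summands $\{s^{2\gamma}X+\theta X\}$ degenerate to $\theta[\mathfrak g_\gamma]$, but each retains dimension $\dim\mathfrak g_\gamma$, so no dimension jumps and the argument goes through verbatim. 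The remaining subtlety is purely bookkeeping: correctly tracking the bundle structure of $\bigH$ and the cross-terms in the differential that arise from varying $t$.
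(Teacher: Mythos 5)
Your proposal is correct and takes essentially the same route as the paper's proof: injectivity comes from Lemma~\ref{lem-trivial-intersection} (together with $N\cap A=\{e\}$), and full rank comes from the decomposition $\mathfrak{g}=\mathfrak{n}\oplus\mathfrak{a}\oplus\mathfrak{h}_s$, valid for all $s\in\R^\Sigma$, with your explicit $\Ad$-computation of the differential at a general point being exactly the ``left and right translations'' step that the paper delegates to Knapp. The additional detail you supply --- verifying the decomposition at degenerate parameters and tracking the fibered structure over $\R^\Sigma$ --- is precisely what the paper leaves implicit, so there is no gap.
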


\begin{proof} 
The map is injective thanks to Lemma~\ref{lem-trivial-intersection} and of full rank near $n=a=h=e$ thanks to  the direct sum decomposition formula 
\[
\mathfrak{h}_t\oplus \mathfrak{a} \oplus \mathfrak{n} = \mathfrak{g},
\]
which is clear from the definition of $\mathfrak{h}_t$. Using left and right translations in $G$, we find that it  is of full rank everywhere; compare \cite[Lemma\,6.44]{KnappBeyond}.  
\end{proof} 

\begin{proposition}
    \label{prop:AH_closed}
    The map 
    \begin{gather*}
       A  \times \bigH \longrightarrow   \R^\Sigma \times \bigG 
     \\ 
      (a,h,t) \longmapsto  (t, ha,a^{-1}\cdot t )   
    \end{gather*}
    is a closed embedding  of smooth manifolds.
\end{proposition}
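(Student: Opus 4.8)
The plan is to prove that $\Phi\colon A\times\bigH\to\R^\Sigma\times\bigG$, $\Phi(a,h,t)=(t,ha,a^{-1}{\cdot}t)$, is a \emph{proper injective immersion}; since an injective immersion between smooth manifolds is a closed embedding exactly when it is proper, this suffices. Smoothness is immediate, so the three things to check are injectivity, the immersion property, and properness.

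Injectivity and the immersion property should both fall out of the transversality packaged in Lemmas~\ref{lem-trivial-intersection} and~\ref{lem:NAH}. For injectivity, if $\Phi(a_1,h_1,t_1)=\Phi(a_2,h_2,t_2)$ then the first coordinate gives $t_1=t_2=:t$ and the second gives $h_1a_1=h_2a_2$, whence $h_2^{-1}h_1=a_2a_1^{-1}\in H_t\cap A$; by Lemma~\ref{lem-trivial-intersection} this intersection is trivial, so $a_1=a_2$ and $h_1=h_2$. For the immersion property I would work in left-invariant frames: a tangent vector to $A\times\bigH$ at $(a,h,t)$ is a triple $(\zeta,\xi,\dot t)$ with $\zeta\in\mathfrak a$, and with $\xi\in\mathfrak g$ (so that $\dot h=h\xi$) and $\dot t$ subject to $(\dot h,\dot t)\in T\bigH$. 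If $d\Phi$ kills this vector, the first coordinate forces $\dot t=0$, so tangency to $\bigH$ forces $\xi\in\mathfrak h_t$; the second coordinate gives $\Ad_{a^{-1}}\xi+\zeta=0$, i.e. $\xi\in\mathfrak h_t\cap\Ad_a\mathfrak a=\mathfrak h_t\cap\mathfrak a$, which is $\{0\}$ by the direct-sum decomposition $\mathfrak g=\mathfrak h_t\oplus\mathfrak a\oplus\mathfrak n$ used in Lemma~\ref{lem:NAH}. Hence $\xi=0$ and $\zeta=0$, so $d\Phi$ is injective.

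Properness is where the real work lies, and it is here that all three output coordinates are needed together. Suppose $\Phi(a_n,h_n,t_n)\to(t_\infty,g_\infty,s_\infty)$, so that (1) $t_n\to t_\infty$, (2) $h_na_n\to g_\infty$ and (3) $a_n^{-1}{\cdot}t_n\to s_\infty$; I must extract a convergent subsequence of $(a_n,h_n,t_n)$. The first reduction is to split off the split centre. Since $\mathfrak a_\Sigma$ is central, $L_\Sigma=G$ and \eqref{eq-split-decomp-of-l-i} reads $G=M_\Sigma A_\Sigma$ with $A_\Sigma$ central and acting trivially on $\R^\Sigma$; writing $a_n=z_nb_n$ with $z_n\in A_\Sigma$ and $b_n\in A^\perp:=\exp(\mathfrak a_\Sigma^\perp)\subseteq M_\Sigma$, and recalling from the proof of Theorem~\ref{thm:H} that $h_n\in H_{t_n}\subseteq M_\Sigma$, condition (2) becomes $z_n\cdot(h_nb_n)\to g_\infty$ with $z_n\in A_\Sigma$ and $h_nb_n\in M_\Sigma$; comparing components in $G=M_\Sigma\times A_\Sigma$ gives $z_n\to z_\infty$ and $h_nb_n\to m_\infty\in M_\Sigma$ separately, while (3) reads $b_n^{-1}{\cdot}t_n\to s_\infty$. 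As $M_\Sigma$ has compact centre, $\Ad\colon M_\Sigma\to GL(\mathfrak g)$ is proper, so it is enough to bound $b_n$ (then $h_n=(h_nb_n)b_n^{-1}$ is bounded as well).

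Bounding $b_n$ is the crux, and I expect it to be the main obstacle. Concretely I would show that $b_n^\alpha$ stays bounded and bounded away from $0$ for every simple root $\alpha$. The lower bound comes from (3): since $b_n^{-\alpha}\,|t_{n,\alpha}|=|s_{n,\alpha}|$ converges while $|t_{n,\alpha}|$ converges, $b_n^{-\alpha}$ cannot run off to $+\infty$ on the support of $t_\infty$. The upper bound must come from (2), using that membership $h_n\in H_{t_n}$ constrains the restricted-root components of $h_n$: by Definition~\ref{def-h-t} the $\mathfrak g_\gamma$-content of $\mathfrak h_{t_n}$ is scaled by $t_n^{2\gamma}$ and paired with the $\mathfrak g_{-\gamma}$-content (coefficient $1$), so if some $b_n^\alpha\to\infty$ then a suitable negative-root matrix coefficient of $\Ad(h_nb_n)$ grows like $b_n^{\alpha}$ times a factor controlled by $t_n$, contradicting the boundedness of $\Ad(h_nb_n)$ forced by (2). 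The two estimates are genuinely intertwined: coordinate (3) alone does \emph{not} bound $b_n$ --- already in $SL(2,\R)$ one can arrange $t_n\to0$ and $a_n^{-1}{\cdot}t_n\to0$ with $a_n\to\infty$ --- which is exactly why $\Phi$ records the source fibre $t$ alongside $ha$ and $a^{-1}{\cdot}t$. Once $b_n$ is bounded, passing to convergent subsequences $b_n\to b_\infty$, $h_n\to h_\infty$ and using that $\bigH$ is closed with $t_n\to t_\infty$ gives $(h_\infty,t_\infty)\in\bigH$, so $(a_n,h_n,t_n)\to(z_\infty b_\infty,h_\infty,t_\infty)$ in $A\times\bigH$, establishing properness and hence the closed embedding.
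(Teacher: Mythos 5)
Your injectivity and immersion arguments are correct, and they amount to the same transversality that the paper extracts from Lemma~\ref{lem:NAH} (which it cites to get, in one stroke, that the map is an injective immersion and a homeomorphism onto its image); your reduction of properness to bounding $b_n\in\exp(\mathfrak{a}_\Sigma^\perp)$, via $G=M_\Sigma\times A_\Sigma$, $H_{t_n}\subseteq M_\Sigma$ and the closedness of $\bigH$, is also sound. The gap is exactly where you predicted it: the bound on $b_n$. Your lower bound ($b_n^\alpha$ bounded away from $0$) is obtained from coordinate (3) only for $\alpha$ in the support of $t_\infty$, and the case $t_{\infty,\alpha}=0$ with $b_n^\alpha\to 0$ is never addressed; moreover your upper-bound mechanism cannot be adapted to it, since that mechanism is driven by coordinate (2), and coordinate (2) cannot rule this case out. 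Concretely, in $SL(2,\R)$ take $t_n\to 0^+$, $b_n=\operatorname{diag}(t_n,t_n^{-1})$ (so $b_n^\alpha=t_n^2\to 0$) and $h_n=\bigl[\begin{smallmatrix}0&-t_n\\ t_n^{-1}&0\end{smallmatrix}\bigr]\in H_{t_n}$: then $h_nb_n=\bigl[\begin{smallmatrix}0&-1\\ 1&0\end{smallmatrix}\bigr]$ is constant and $t_n\to 0$, so coordinates (1) and (2) converge while $b_n$ degenerates; only coordinate (3), namely $b_n^{-\alpha}t_n=t_n^{-1}\to\infty$, detects this, and it does so through a quantitative comparison of the rate of $t_n\to 0$ against the rate of $b_n^\alpha\to 0$, not through the soft convergence statement you invoke. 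Ruling out this regime in general therefore requires an estimate mixing the membership constraint $h_n\in H_{t_n}$, the convergence of the $\overline{\mathfrak{n}}$-components of $h_nb_n$, and coordinate (3) all at once; in higher rank, where $b_n$ may blow up in some simple-root directions while collapsing in others and where $H_{t_n}$ degenerates towards $K_I\ltimes\overline{N}_I$, this is substantial work that the proposal does not contain.

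The paper avoids all such estimates, and what it uses instead is precisely the idea your proposal is missing. Since Lemma~\ref{lem:NAH} already gives a homeomorphism onto the image, it suffices to show the image is \emph{closed}, and this is done algebraically: every image point $(t',g,t'')$ satisfies the closed relation $\Ad_{g^{-1}}[\mathfrak{h}_{t'}]=\mathfrak{h}_{t''}$, hence so does every limit point; Proposition~\ref{prop:h_t_structure} then forces $t'$ and $t''$ to have the same support $I$, a sign-continuity argument gives $t''=a^{-1}\cdot t'$ for some $a\in A$, and then $\gamma=ga^{-1}$ normalizes $\mathfrak{h}_{t'}$, so Theorem~\ref{thm-normalizer-of-h-i} (the normalizer of $\mathfrak{h}_I$ is $A_IH_I$) yields $\gamma=h\alpha$ with $h\in H_{t'}$, $\alpha\in A_I$, exhibiting the limit point as $(t',h\alpha a,(\alpha a)^{-1}\cdot t')$, an image point. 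The normalizer theorem is what handles, uniformly, the degenerate directions $t_{\infty,\alpha}=0$ where your estimates are absent. If you wish to keep your properness framing, the efficient repair is to prove closedness of the image in this way and then observe that a homeomorphism onto a closed subset of a locally compact Hausdorff space is automatically proper, rather than attempting the matrix-coefficient bounds directly.
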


\begin{proof}
Lemma~\ref{lem:NAH} shows that the map is an injective immersion and  a homeomorphism onto its image.  So it remains  to show the image is    closed.  

The elements $(t', g, t'')$ in the image of the map satisfy 
\begin{equation}
    \label{eq-closure-conjugation-relation}
\Ad_{g^{-1}} [ \lie{h}_{t'} ]= \lie{h}_{t''} ,
\end{equation}
and by continuity the same relation holds for points in the closure. This being so, it follows from Proposition~\ref{prop:h_t_structure} that if $(t',g,t'')$ is a point in the closure then
for all $\alpha\in \Sigma$, 
\[
t'_\alpha \ne 0 \quad \Leftrightarrow \quad t''_\alpha \ne 0.
\]
Let $I =\{\, \alpha \in \Sigma: t'_\alpha \ne 0\,\}$.  It follows from continuity that   $\operatorname{sign} (t'_\alpha) = \operatorname{sign}(t''_\alpha)$ for all $\alpha \in I$, and therefore $t''=a^{-1} t'$ for some $a\in A$. Let us now write 
\[
(t', g, t'') = (t', \gamma a,a^{-1}\cdot t'),
\]
where $\gamma = ga^{-1}$.  It follows from \eqref{eq-closure-conjugation-relation} that 
\[
\Ad_{a^{-1}\gamma^{-1}}[\lie{h}_{t'}] =  \lie{h}_{t''}
= \lie{h}_{a^{-1}t'}  = \Ad_{a^{-1}}[\lie{h}_{t'}]
\]and therefore $\gamma^{-1} $ normalizes $\lie{h}_{t'}$.  By Theorem \ref{thm-normalizer-of-h-i}, $\gamma\in H_{t'}A_I$, so $\gamma=h\alpha$ for some $h \in \lie{h}_{t'}$, $\alpha \in A_I$.
As a result
\[
(t', g, t'')) = (t', h  \alpha  a,a^{-1}\cdot t') = (t', h  \alpha  a,(a\alpha)^{-1} \cdot t'),
\]
since $A_I$ fixes $t'$.  We have therefore proved that the image of our map is closed. 
\end{proof}

We can now prove that the main theorem of this section.

\begin{theorem}
\label{thm-proper-a-action}
    There is a unique smooth manifold structure on the Oshima space $\Oshima$ for which the quotient map from $\bigG$ to $\Oshima$ is a smooth submersion.
\end{theorem}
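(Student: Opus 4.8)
The plan is to apply Godement's criterion (Lemma~\ref{lem-godement}) directly to the manifold $\bigG = G\times\R^\Sigma$ and the equivalence relation $R\subseteq\bigG\times\bigG$ whose quotient is $\Oshima$; this produces the smooth structure together with the submersivity of the (composite) quotient map $\bigG\to\Oshima$ in a single step, and Godement's criterion also supplies uniqueness. First I would make $R$ explicit. Unwinding the two quotients --- first by $\bigH$, then by the right $A$-action \eqref{eq:Oshima-A-action} --- and using the conjugation equivariance \eqref{eq:H-conjugation} in the form $H_{a^{-1}\cdot t} = \Ad_{a^{-1}}[H_t] = a^{-1}H_t a$, one finds
\[
(g_1,t_1)\sim(g_2,t_2)
\quad\Longleftrightarrow\quad
\exists\, a\in A,\ h\in H_{t_1}:\ t_2 = a^{-1}\cdot t_1,\ g_2 = g_1 h a .
\]
Thus $R$ is the image of the smooth map
\[
\widetilde\Psi\colon G\times A\times\bigH \longrightarrow \bigG\times\bigG,
\qquad
(g_1,a,(h,t))\longmapsto \bigl((g_1,t),\ (g_1 h a,\ a^{-1}\cdot t)\bigr).
\]

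The heart of the matter is condition (i) of Lemma~\ref{lem-godement}, that $R$ be a closed embedded submanifold, and here Proposition~\ref{prop:AH_closed} does the real work. Since $R$ is invariant under the diagonal left-translation action of $G$, I would straighten it out with the diffeomorphism
\[
\Theta\colon\bigG\times\bigG\to\bigG\times\bigG,
\qquad
\bigl((g_1,t_1),(g_2,t_2)\bigr)\longmapsto\bigl((g_1,t_1),(g_1^{-1}g_2,t_2)\bigr).
\]
A direct computation gives $\Theta(R) = G\times\Gamma$, where $G$ is the now-decoupled $g_1$-factor and $\Gamma = \{\,(t,\,ha,\,a^{-1}\cdot t)\,\}\subseteq\R^\Sigma\times\bigG$ is exactly the image of the closed embedding of Proposition~\ref{prop:AH_closed}. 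As $\Gamma$ is closed and embedded, so is $G\times\Gamma$, and hence so is $R = \Theta^{-1}(G\times\Gamma)$. (The injectivity encoded in Proposition~\ref{prop:AH_closed}, which ultimately rests on $H_t\cap AN=\{e\}$ from Lemma~\ref{lem-trivial-intersection}, is what makes $\Gamma$ --- and thus $R$ --- embedded rather than merely immersed.)

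It remains to verify condition (ii), that both projections $R\to\bigG$ are submersions. Transporting through $\Theta$ and parametrizing $G\times\Gamma$ by $G\times A\times\bigH$, the first projection becomes $(g_1,a,(h,t))\mapsto(g_1,t)$, which is a submersion because the bundle projection $\bigH\to\R^\Sigma$ is one by Theorem~\ref{thm:H}. The second projection becomes $(g_1,a,(h,t))\mapsto(g_1 h a,\ a^{-1}\cdot t)$; its differential is already surjective onto the $G$-direction by varying $g_1$ (right translation is a diffeomorphism), while surjectivity onto the $\R^\Sigma$-direction comes from varying $t$ --- which is unconstrained thanks to the submersion $\bigH\to\R^\Sigma$ --- together with the fact that $t\mapsto a^{-1}\cdot t$ is a diffeomorphism of $\R^\Sigma$ for each fixed $a$. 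I expect this second projection to be the main obstacle: unlike the first, its submersivity is not formal but genuinely couples the right $A$-action on $\R^\Sigma$ (Definition~\ref{def-a-action}) with the submersion of Theorem~\ref{thm:H}, and one must check that the two sources of variation jointly span $T\bigG$.

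With (i) and (ii) established, Godement's criterion yields the unique smooth structure on $\Oshima$ for which $\bigG\to\Oshima$ is a submersion, proving the theorem. As an alternative route one could instead observe that the induced $A$-action on $\bigG/\bigH$ is free --- again by Lemma~\ref{lem-trivial-intersection}, since $\ldoublebracket g,t\rdoublebracket$ is fixed by $a$ only if $a\in H_t\cap A=\{e\}$ --- and then extract properness from Proposition~\ref{prop:AH_closed} before invoking the quotient-manifold theorem; the single-step formulation above has the advantage of handling both quotients at once and of feeding Proposition~\ref{prop:AH_closed} in almost verbatim.
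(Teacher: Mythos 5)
Your proposal is correct and follows essentially the same route as the paper: apply Godement's criterion directly to the relation $R=\bigl\{\bigl((g,t),(gha,a^{-1}\cdot t)\bigr)\bigr\}$ on $\bigG$, with Proposition~\ref{prop:AH_closed} (after left-translation straightening) supplying the closed-embedding condition. Your explicit verification of condition (ii) of Lemma~\ref{lem-godement}, which the paper leaves implicit, is a correct and welcome addition.
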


\begin{proof}
The space $\Oshima$ is the quotient of $\bigG$ by the equivalence relation 
\[
\bigl \{ \,
\bigl ( (g,t), (gha,a^{-1}\cdot t)\bigr )\in \bigG\times \bigG : g\in G,\,\, t\in \R^\Sigma,\,\, h\in H_t,\,\, a\in A
\, \bigr \}.
\]
It follows from Proposition~\ref{prop:AH_closed} that this is a closed subset and an embedded submanifold, and it therefore follows from Godement's criterion that there is a unique smooth manifold structure on $\Oshima$ for with the quotient map $\bigG\to \Oshima$ is a smooth submersion.
\end{proof}

\begin{remarks}
    (i) The  theorem is  equivalent to the assertion that there is a unique smooth manifold structure on $\Oshima$ for which the quotient map from $\bigG/\bigH$ to $\Oshima$ is a smooth submersion (but it is a bit more convenient to prove the theorem as stated). 
\label{rmk:groupoid_construction}
    (ii) In group action language, the action of $A$ on $\bigG/\bigH$ is free and proper.   To take this remark even further, the smooth family of Lie groups $\bigH$ is a Lie groupoid over $\R^\Sigma$, and  \eqref{eq:H-A-action} gives an action of $A$ on $\bigH$ by Lie groupoid automorphisms.  The Oshima space is then the quotient of the space $\bigG\to\R^\Sigma$ by a free and proper right action of the Lie groupoid $\bigH\rtimes A$.  
\end{remarks}

\begin{theorem}[Oshima {\cite{Oshima78}}]
\label{thm:Oshima-compactness}
The  manifold $\Oshima$ is compact.
\end{theorem}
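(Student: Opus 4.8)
The strategy is to exhibit a single compact set that already surjects onto $\Oshima$. Concretely, I would consider the continuous map
\[
\Phi\colon K\times[-1,1]^\Sigma \longrightarrow \Oshima,\qquad (\kappa,s)\longmapsto \ldoublebracket \kappa,s\rdoublebracket ,
\]
which is the restriction to $K\times[-1,1]^\Sigma\subseteq \bigG$ of the quotient submersion $\bigG\to\Oshima$. Its image $C=\Phi\bigl(K\times[-1,1]^\Sigma\bigr)$ is compact, and since $\Oshima$ is a smooth manifold (Theorem~\ref{thm-proper-a-action}) it is Hausdorff, so $C$ is closed. The whole content of the theorem is then the claim that $C=\Oshima$. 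I would prove this by showing that $C$ contains the dense subset of classes coming from $t\in\R^\Sigma_\times$, and then invoking closedness. It is worth stressing why this is not routine: the construction is \emph{not} compact stratum by stratum. The open dense stratum is the symmetric space $G/K$ (modulo the central part of $A$), which is non-compact, so any proof must show that a compact set in $\bigG$ covers this non-compact slice once one passes to the quotient. This is exactly what the interplay between the $A$-action and the $\R^\Sigma$-coordinate achieves.

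\textbf{The key reduction.} Fix a class $\ldoublebracket g,t\rdoublebracket$ with $t\in\R^\Sigma_\times$, and let $\epsilon_\alpha=\sign(t_\alpha)\in\{\pm1\}$. Because the simple roots are linearly independent functionals on $\lie{a}$, the map $a\mapsto(a^\alpha)_{\alpha\in\Sigma}$ is a surjection $A\to\R^\Sigma_+$, so there is $a_1\in A$ with $a_1^\alpha=|t_\alpha|$ for all $\alpha$. Applying the $A$-action \eqref{eq:Oshima-A-action} gives
\[
\ldoublebracket g,t\rdoublebracket=\ldoublebracket g a_1,\,a_1^{-1}\!\cdot t\rdoublebracket=\ldoublebracket g a_1,\epsilon\rdoublebracket .
\]
Since $|\epsilon|=\mathbf 1$, Definition~\ref{def-h-t-t-non-zero} gives $H_\epsilon=H_{\mathbf 1}=\Ad_{a_{\mathbf 1}}[K]=K$, because $a_{\mathbf 1}$ lies in the centre of $G$. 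Now use the Cartan decomposition to write $g a_1=k_1 a_2 k_2$ with $k_1,k_2\in K$ and $a_2\in A$ chosen antidominant, i.e. $a_2^\alpha\le 1$ for every $\alpha\in\Sigma$. As $k_2\in K=H_\epsilon$, it is absorbed into the coset, and a second application of the $A$-action (with the element $a_2^{-1}$) trades the radial part $a_2$ for the $\R^\Sigma$-coordinate:
\[
\ldoublebracket g a_1,\epsilon\rdoublebracket=\ldoublebracket k_1 a_2,\epsilon\rdoublebracket=\ldoublebracket k_1 a_2\, a_2^{-1},\,a_2\!\cdot\epsilon\rdoublebracket=\ldoublebracket k_1,\,a_2\!\cdot\epsilon\rdoublebracket .
\]
Setting $s=a_2\cdot\epsilon$, we have $|s_\alpha|=a_2^\alpha\le 1$, so $s\in[-1,1]^\Sigma$ and $k_1\in K$; hence $\ldoublebracket g,t\rdoublebracket=\ldoublebracket k_1,s\rdoublebracket\in C$. (Note that the sign factor $\epsilon$ is why the cube must be $[-1,1]^\Sigma$ and not $[0,1]^\Sigma$: the distinct sign patterns index distinct copies of $G/K$ in the open stratum, and all of them must be reached. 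The possibly non-compact central direction of $a_2$ causes no trouble, as it acts trivially on $\R^\Sigma$ and is cancelled in $G$.)

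\textbf{Conclusion.} The set $G\times\R^\Sigma_\times$ is dense in $\bigG$, and the quotient map $\pi\colon\bigG\to\Oshima$ is continuous and surjective, so its image, namely the set of classes $\ldoublebracket g,t\rdoublebracket$ with $t\in\R^\Sigma_\times$, is dense in $\Oshima$. By the previous paragraph this dense set is contained in $C$; since $C$ is closed, $C=\Oshima$, and therefore $\Oshima$ is compact. I expect the only genuinely delicate point to be the ``trading'' step: recognising that moving out to infinity in the non-compact slice $G/K$ via the Cartan radial part is the same, after the $A$-quotient, as moving $s$ toward the boundary of the cube $[-1,1]^\Sigma$. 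Everything else is bookkeeping with the two commuting moves available, namely right multiplication by $H_t$ and the $A$-action \eqref{eq:Oshima-A-action}, together with the identity $H_{\mathbf 1}=K$ and the Cartan decomposition.
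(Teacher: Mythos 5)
Your proposal is correct and follows essentially the same route as the paper's proof: the same candidate compact set (the image of $K\times[-1,1]^\Sigma$), the same dense set ($t\in\R^\Sigma_\times$), and the same three moves — normalize $t$ to a sign vector via the $A$-action, apply the $KAK$ decomposition with the $A$-part in the closed negative chamber ($a^\alpha\le 1$), absorb the right $K$-factor into the coset using $H_{\pm\mathbf 1}=K$, then trade the radial part back into the $\R^\Sigma$-coordinate with a second application of the $A$-action. The only difference is presentational: you argue pointwise and make the closed-plus-dense conclusion explicit, where the paper phrases it as a chain of set identities and inclusions.
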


\begin{proof} 
The set 
\begin{equation}
    \label{eq-compact-set-in-oshima}
\{\, \ldoublebracket k, t\rdoublebracket \in \Oshima : k\in K ,\,\,\, | t_\alpha | {\le} 1 \,\, \forall \alpha \in \Sigma \,\}
\end{equation}
is a compact subset of $\Oshima$ (it is the continuous image of a compact set), while the set 
\begin{equation}
    \label{eq-dense-set-in-oshima}
\{ \, \ldoublebracket g,t\rdoublebracket\in \Oshima : g\in G,\,\,  t_\alpha{\ne} 0\,\,\, \forall \alpha \in \Sigma\,\} 
\end{equation}
is dense in $\Oshima$. So it suffices to prove that \eqref{eq-dense-set-in-oshima} is a subset of \eqref{eq-compact-set-in-oshima}.

Using the $A$-action,   \eqref{eq-dense-set-in-oshima} is equal to 
\[
\{ \, \ldoublebracket g,t\rdoublebracket\in \Oshima : g\in G, \,\, t_\alpha {=}\pm 1,\, \forall \alpha \in \Sigma\,\} .
\]
Using the $KAK$ decomposition of $G$ \cite[Sec.\,VII.3]{KnappBeyond}, and keeping in mind that $H_{t} {=}K$ if $t_\alpha {=} \pm 1$ for all $\alpha$, the above  is in turn  equal to
\begin{equation}
    \label{eq-dense-set-in-oshima-2}
\{ \, \ldoublebracket ka,t\rdoublebracket\in \Oshima : k\in K,\,\, a\in {A^{-}}, \,\,\, t_\alpha {=}\pm 1\,\, \forall \alpha \in \Sigma\,\} ,
\end{equation}
where $A^-$ is the closed negative chamber in $A$:
\[
A^- = \{\, a\in A : a^\alpha {\le}  1\,\, \forall \alpha \in \Sigma \,\}.
\]
Using the $A$-action again, we find that \eqref{eq-dense-set-in-oshima-2} is a subset  \eqref{eq-compact-set-in-oshima}, as required.
\end{proof}

\subsection{Orbits in the Oshima space}
\label{sec:orbits}

The Oshima space is equipped with a smooth left action of $G$ by left-multiplication: 
\[
\gamma\cdot  \ldoublebracket g,t\rdoublebracket = \ldoublebracket \gamma g,t\rdoublebracket\qquad \forall \gamma, g\in G,\,\, \forall t\in \R^\Sigma.
\]
There are only finitely many $G$-orbits in $\Oshima$. To see this, observe that the continuous map  
\begin{equation}
    \label{eq-counting-orbits-in-oshima}
    \begin{gathered}
    \Oshima \longrightarrow \R^\Sigma / A
    \\
    \ldoublebracket g,t\rdoublebracket \longmapsto [t]
    \end{gathered}
\end{equation}
passes to a bijection from the  set of $G$-orbits in $\Oshima$ to the set of $A$-orbits in $\R^\Sigma$.  Thus, the orbit of a point $\ldoublebracket g,t \rdoublebracket$ is determined by the signs of the components $t_\alpha$, $\sign(t_\alpha)\in\{-1,0,1\}$.

\begin{lemma}
    \label{lem-orbit-structure}
    Let $\ldoublebracket g,t \rdoublebracket\in\Oshima$ and let $I=\{\alpha\in\Sigma : t_\alpha\neq 0\}$ as in \eqref{eq:I}.  Then the orbit of $\ldoublebracket g,t \rdoublebracket$ is $G$-equivariantly diffeomorphic to the homogeneous space $G /   A_IH_I$.      
\end{lemma}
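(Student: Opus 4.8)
The plan is to identify the orbit with $G/G_{\ldoublebracket g,t\rdoublebracket}$, where $G_{\ldoublebracket g,t\rdoublebracket}$ is the isotropy group of the point, and then to recognize this isotropy group as a $G$-conjugate of $A_IH_I$. First I would compute the isotropy group directly from the equivalence relation presenting $\Oshima$ in the proof of Theorem~\ref{thm-proper-a-action}. An element $\gamma\in G$ fixes $\ldoublebracket g,t\rdoublebracket$ precisely when $(\gamma g,t)$ and $(g,t)$ are equivalent, that is, when there exist $h\in H_t$ and $a\in A$ with
\[
g = \gamma g h a \qquad\text{and}\qquad t = a^{-1}\cdot t .
\]
The relation $a^{-1}\cdot t = t$ says exactly that $a^\alpha = 1$ for every $\alpha\in I$, i.e. $a\in A_I$; and the first relation gives $\gamma = g a^{-1}h^{-1}g^{-1}$.

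The key structural point is that $A_I$ normalizes $H_t$. Indeed, for $a\in A_I$ we have $a\cdot t = t$, so Lemma~\ref{lem-h-conjugation} gives $\Ad_a\lie{h}_t = \lie{h}_{a\cdot t} = \lie{h}_t$; since $A$ commutes pointwise with $M$ and $H_t = MH_{0,t}$ (Definition~\ref{def-of-all-h-t-groups}), it follows that $\Ad_a H_t = H_t$. Hence $A_IH_t$ is a subgroup of $G$, and the computation above shows that $a^{-1}h^{-1}$ ranges over all of $A_IH_t$, so that
\[
G_{\ldoublebracket g,t\rdoublebracket} = g\,(A_IH_t)\,g^{-1}.
\]

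Next I would replace $H_t$ by $H_I$. By Proposition~\ref{prop:h_t_structure} there is $a_0\in A$ with $\Ad_{a_0}\lie{h}_I = \lie{h}_t$; passing to connected subgroups and using $\Ad_{a_0}M = M$ gives $\Ad_{a_0}H_I = H_t$, while $\Ad_{a_0}A_I = A_I$ because $A$ is abelian. Therefore $A_IH_t = a_0(A_IH_I)a_0^{-1}$, and the isotropy group is conjugate to $A_IH_I$ via $ga_0$. Since $A_IH_I = \operatorname{Normalizer}_G(\lie{h}_I)$ by Theorem~\ref{thm-normalizer-of-h-i}, it is closed, so $G/A_IH_I$ is a smooth homogeneous space and the orbit map descends to a $G$-equivariant smooth bijection $G/A_IH_I \to G\cdot\ldoublebracket g,t\rdoublebracket$.

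The remaining, and only delicate, step is to promote this bijection to a diffeomorphism, for which it suffices to know that the orbit is locally closed in $\Oshima$. Here I would use that there are only finitely many orbits: the continuous map $\Oshima\to\R^\Sigma/A$ of \eqref{eq-counting-orbits-in-oshima} carries the $G$-orbit onto the single $A$-orbit of $t$, which is the sign stratum $\{\, s : \sign(s_\alpha)=\sign(t_\alpha)\ \forall\alpha\,\}$ (using that the simple restricted roots are linearly independent on $\lie{a}$, so the $A$-orbit exhausts the stratum). This stratum is locally closed in $\R^\Sigma$, so its preimage, the orbit, is locally closed in $\Oshima$; the standard fact that a locally closed orbit of a smooth Lie group action is an embedded submanifold $G$-equivariantly diffeomorphic to $G/G_x$ then finishes the proof. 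I expect this last embeddedness point to be the main obstacle, in that it is the only place where one must appeal to global information about $\Oshima$ rather than to the explicit algebra of the isotropy groups.
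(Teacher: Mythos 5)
Your proof is correct, and its core coincides with the paper's: both identify the orbit by computing the stabilizer of a point directly from the equivalence relation defining $\Oshima$, and both reduce the condition $a^{-1}\cdot t = t$ to $a\in A_I$. Two differences are worth recording. First, the paper avoids all of your conjugation bookkeeping (that $A_I$ normalizes $H_t$, the element $a_0$ from Proposition~\ref{prop:h_t_structure}) by first moving the basepoint: using the $G$-action and the relation $\ldoublebracket e,t\rdoublebracket = \ldoublebracket a, a^{-1}\cdot t\rdoublebracket$, the paper replaces $\ldoublebracket g,t\rdoublebracket$ by a point $\ldoublebracket e,t'\rdoublebracket$ with $t'_\alpha\in\{\pm 1\}$ for $\alpha\in I$ and $t'_\alpha=0$ otherwise, so that $H_{t'}=H_I$ exactly and the stabilizer is $A_IH_I$ on the nose. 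Second, and more substantively, the paper's proof stops at the stabilizer computation: it never addresses why the orbit, as a subset of $\Oshima$, is an embedded submanifold, so that the $G$-equivariant bijection $G/A_IH_I\to G\cdot\ldoublebracket g,t\rdoublebracket$ is actually a diffeomorphism. Your last paragraph supplies exactly this missing step, by showing the orbit is locally closed and invoking the standard fact about locally closed orbits of Lie group actions; your argument there is sound, with one phrase to tighten --- the sign stratum lives in $\R^\Sigma$ while the map \eqref{eq-counting-orbits-in-oshima} lands in $\R^\Sigma/A$, so one should say that the point $[t]\in\R^\Sigma/A$ is locally closed (because its preimage, the stratum $A\cdot t$, is $A$-saturated and locally closed and the quotient map $\R^\Sigma\to\R^\Sigma/A$ is open), and then pull back $\{[t]\}$ rather than the stratum itself. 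In short, your version is longer where the paper is more efficient, and complete where the paper is implicit.
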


\begin{proof} 
There is a point of the form $\ldoublebracket e,t\rdoublebracket$ in the orbit for which 
\[
t_{\alpha} = \begin{cases}  \pm 1 & \alpha \in I \\ 0 & \alpha \notin I.
\end{cases}
\]
The  subgroup $H_{t}$  is equal to $H_I$.  Now 
\[
g \cdot \ldoublebracket e,t\rdoublebracket = \ldoublebracket e,t\rdoublebracket
\quad 
\Leftrightarrow \quad 
\exists h \in H_I,\,\,\, \exists a\in A: \,\,\, 
g = ha \,\,\, \& \,\,\, a^{-1} \cdot t= t.
\]
 But the condition $a^{-1}\cdot t = t$ is equivalent to $a \in A_I$, so 
\[
g \cdot \ldoublebracket e,t\rdoublebracket = \ldoublebracket e,t\rdoublebracket
\quad 
\Leftrightarrow \quad 
g \in A_I N_I,
\]
as required.
\end{proof}

\begin{example} 
\label{ex-sl2-part-two}
Let $G=SL(2,\R)$,
equipped with the Iwasawa decomposition described in Example~\ref{ex-sl2-part-zero}.  If $\C_\infty$ denotes the extended complex plane (\emph{i.e}, the Riemann sphere), then the map 
\[
\begin{gathered}
\bigG \longrightarrow \C_\infty\\
(g,t) \longmapsto g\cdot (it) \in \C_\infty,
\end{gathered}
\]
where the dot indicates action by Mobius transformations, factors as the composition of the submersion $\bigG\to \Oshima$ with a diffeomorphism  
\[
\begin{gathered}
\Oshima \stackrel \cong \longrightarrow \C_\infty\\
\ldoublebracket g,t\rdoublebracket \longmapsto g\cdot it \in \C_\infty.
\end{gathered}
\]
One sees that there are three $G$-orbits, namely the upper and lower half-planes, both diffeomorphic to $G/H_\Sigma = G/K$, and the extended real line $\R\sqcup\{\infty\}$, diffeomorphic to $G/H_\emptyset A_\emptyset = G/MA\overline{N}$. 
\end{example}

\subsection{Group bundle over the Oshima space}

It follows from the definition of equivalence relation defining the Oshima space  that if $g_1,g_2\in G$ and  $t_1,t_2\in \R^\Sigma$, then 
\[
\ldoublebracket g_1,t_1\rdoublebracket = \ldoublebracket g_2,t_2\rdoublebracket \quad \Rightarrow \quad  \Ad_{g_1}[ H_{t_1}] =  \Ad_{g_2 }[ H_{t_2}] .
\]
\begin{definition}
    \label{def-h-g-t}
For $g\in G$ and $t\in \R^\Sigma$ we shall write 
\[
H_{\ldoublebracket g,t\rdoublebracket} = \Ad_g [ H_t],
\]
which is a closed subgroup of $G$.  We then write 
\begin{equation*}
    \HBundle_{\Oshima} = \bigl \{ \,
    \bigl (\ldoublebracket g,t\rdoublebracket, \gamma \bigr )\in \Oshima \times G : \gamma \in H_{\ldoublebracket g,t\rdoublebracket} \, \bigr \} .
\end{equation*}
\end{definition}

\begin{theorem} 
\label{thm-smooth-bundle-of-groups-over-oshima}
The space $\HBundle_{\Oshima}$ is a closed  submanifold of $\Oshima\times G$, and the projection map to $\Oshima$ is a smooth submersion.
\end{theorem}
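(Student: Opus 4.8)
The plan is to exhibit $\HBundle_{\Oshima}$ locally as the image of a smooth family of groups that we already control, namely the bundle $\bigH \to \R^\Sigma$ from Theorem~\ref{thm:H}, transported by the $G$-action. The key structural observation is that $\HBundle_{\Oshima}$ fits into a commuting diagram: the map $\bigG = G \times \R^\Sigma \to \Oshima$ is a submersion, and over it we have the pulled-back bundle whose fiber over $(g,t)$ is $\Ad_g[H_t]$. First I would form the map
\[
\Phi \colon \bigG \underset{\R^\Sigma}{\times} \bigH \longrightarrow \Oshima \times G, \qquad
\bigl((g,t),(h,t)\bigr) \longmapsto \bigl(\ldoublebracket g,t\rdoublebracket, g h g^{-1}\bigr),
\]
whose image is exactly $\HBundle_{\Oshima}$, using Definition~\ref{def-h-g-t} and the fact that $H_{\ldoublebracket g,t\rdoublebracket}=\Ad_g[H_t]$. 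Here the fibered product is over the projections of both $\bigG$ and $\bigH$ to $\R^\Sigma$; it is a smooth manifold because $\bigH \to \R^\Sigma$ is a submersion by Theorem~\ref{thm:H}.

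Next I would establish that $\HBundle_{\Oshima}$ is a smooth embedded submanifold by working locally. Since $\bigG \to \Oshima$ is a submersion (Theorem~\ref{thm-proper-a-action}), it admits local smooth sections; composing such a section $\sigma \colon \mathcal{O} \to \bigG$ (defined on an open $\mathcal{O}\subseteq\Oshima$) with the smooth bundle $\bigH$ and the conjugation map produces, over $\mathcal{O}$, a diffeomorphism between an open piece of $\HBundle_{\Oshima}$ and the pullback $\sigma^*\bigH$ conjugated into $G$. Concretely, writing $\sigma(x)=(g_x,t_x)$, the fiber of $\HBundle_{\Oshima}$ over $x$ is $\Ad_{g_x}[H_{t_x}]$, and the assignment $x \mapsto$ (smooth sections of $\bigH$ over $t_x$, conjugated by $g_x$) is smooth because all the ingredients — $\sigma$, the bundle $\bigH$, the conjugation action $G\times G\to G$ — are smooth. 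This shows every point of $\HBundle_{\Oshima}$ has a neighborhood in which it is cut out as a smooth embedded submanifold, and the projection to $\Oshima$ is a submersion there with fibers the closed Lie subgroups $\Ad_{g_x}[H_{t_x}]$, each closed in $G$ by Lemma~\ref{lem-h-0-t-is-closed} and the definitions.

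The two remaining points are well-definedness of the local picture and closedness of $\HBundle_{\Oshima}$ in $\Oshima\times G$. Well-definedness — that the fiber $\Ad_g[H_t]$ depends only on the class $\ldoublebracket g,t\rdoublebracket$ — is precisely the implication recorded just before Definition~\ref{def-h-g-t}, so the construction descends to $\Oshima$ unambiguously, and independence of the chosen local section follows because any two sections differ by the equivalence relation defining $\Oshima$, which by \eqref{eq:H-conjugation} preserves $\Ad_g[H_t]$. For closedness I would argue that $\HBundle_{\Oshima}$ is the image of the proper-on-fibers conjugation family, but more cleanly: the defining condition $\gamma\in H_{\ldoublebracket g,t\rdoublebracket}$ is equivalent to $\Ad_{g^{-1}}[\{\gamma\}]\subseteq H_t$, a closed condition inherited from the closedness of $\bigH\subseteq\bigG$ (Theorem~\ref{thm:H}) pulled back along the continuous maps involved; since $\Oshima$ is compact (Theorem~\ref{thm:Oshima-compactness}), propriety is automatic and the image is closed.

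\emph{The main obstacle} will be handling the subtlety that $\Oshima$ is a quotient of $\bigG$ by the combined $\bigH$- and $A$-action, so the naive fiber $\Ad_g[H_t]$ must be checked to be genuinely constant along the entire equivalence class $\ldoublebracket g,t\rdoublebracket$, not merely along the $\bigH$-fibers; the $A$-direction requires \eqref{eq:H-conjugation}, $H_{a\cdot t}=\Ad_a[H_t]$, to guarantee $\Ad_{ga}[H_{a^{-1}\cdot t}]=\Ad_g[H_t]$. Once this invariance is in hand, transversality of the local section picture and the submersion property follow routinely from Theorem~\ref{thm:H}, and the only real care needed is to patch the local submanifold charts into a global embedded submanifold structure — a standard argument given that the transition data are conjugations by smoothly varying group elements.
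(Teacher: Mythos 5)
Your main line of argument is essentially the paper's. The paper proves the theorem by pulling $\HBundle_{\Oshima}$ back along the explicit chart $N\times\R^\Sigma\cong\BigCell$ of Lemma~\ref{lem-big-cell} (which is exactly a choice of your ``local section'' of the submersion $\bigG\to\Oshima$, namely $(n,t)\mapsto(n,t)$), applying the conjugation diffeomorphism $(n,t,\gamma)\mapsto(n,t,n^{-1}\gamma n)$ to straighten the fibers into $H_t$, and observing that the resulting set $\{(n,t,\gamma):\gamma\in H_t\}$ is manifestly a closed smooth submanifold because of Theorem~\ref{thm:H}; the $G$-translates of $\BigCell$ then cover $\Oshima$. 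Your abstract sections $\sigma(x)=(g_x,t_x)$, the conjugation by $g_x$, and the reduction to $\bigH$ are the same steps, and this local picture already does double duty: since the section domains give an open cover $\{\mathcal{O}\times G\}$ of the \emph{ambient} space $\Oshima\times G$, and over each $\mathcal{O}\times G$ your set is the preimage of the closed set $\bigH$ under the continuous map $(x,\gamma)\mapsto(g_x^{-1}\gamma g_x,\,t_x)$, closedness follows too.

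One claim in your closedness discussion is, however, genuinely false: the map $\Phi\colon\bigG\times_{\R^\Sigma}\bigH\to\Oshima\times G$ is not proper, and compactness of $\Oshima$ does not make it so. The fiber of $\Phi$ over a point $\bigl(\ldoublebracket g_0,t_0\rdoublebracket,\gamma\bigr)$ is in bijection with the entire equivalence class of $(g_0,t_0)$ in $\bigG$, namely $\{(g_0ha,\,a^{-1}\cdot t_0):h\in H_{t_0},\,a\in A\}$ (the $\bigH$-coordinate being then forced to equal $g^{-1}\gamma g$); this contains a closed copy of $A$ and is non-compact whenever $\lie{a}\neq 0$ (and $H_{t_0}$ itself is non-compact whenever some $t_{0,\alpha}=0$). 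Properness concerns preimages of compact sets, and a compact target never compensates for non-compact fibers. The repair stays inside your own framework: either quote the local-section argument as above, or note that the set $\{((g,t),\gamma)\in\bigG\times G: g^{-1}\gamma g\in H_t\}$ is closed \emph{and saturated} for the equivalence relation defining $\Oshima$ (this is precisely the invariance under $H_t$ and $A$ that you verified via \eqref{eq:H-conjugation}), and the quotient map $\bigG\times G\to\Oshima\times G$ is open, being a submersion times the identity; an open surjection carries saturated closed sets to closed sets, since the image of the complementary saturated open set is open.
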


A simple way prove the theorem is to introduce the following coordinate patches on $\Oshima$: 

\begin{lemma} 
\label{lem-big-cell}
The smooth map 
\[
\begin{gathered} 
N \times \R^\Sigma \longrightarrow \Oshima 
\\
(n,t) \longmapsto \ldoublebracket n,t\rdoublebracket
\end{gathered}
\]
is a diffeomorphism onto an open subset $\BigCell$ of $\Oshima$.
\end{lemma}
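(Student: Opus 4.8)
The plan is to build the map in the statement out of the fiberwise Iwasawa decomposition of Lemma~\ref{lem:NAH}. Write $\Phi\colon N\times\R^\Sigma\to\Oshima$ for the map $(n,t)\mapsto\ldoublebracket n,t\rdoublebracket$; it is smooth, being the composite of the inclusion $N\times\R^\Sigma\hookrightarrow\bigG$, $(n,t)\mapsto(n,t)$, with the quotient submersion $q\colon\bigG\to\Oshima$ of Theorem~\ref{thm-proper-a-action}. I will denote by $\cO\subseteq\bigG$ the open set that is the image of the diffeomorphism $\Psi\colon N\times A\times\bigH\xrightarrow{\cong}\cO$, $(n,a,(h,t))\mapsto(nah,t)$, of Lemma~\ref{lem:NAH}. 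Since $q$ is a submersion, hence an open map, the set $\BigCell:=q(\cO)$ is open in $\Oshima$, and it will turn out to be exactly the image of $\Phi$.

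The key computation is that, using the $A$-action \eqref{eq:Oshima-A-action} together with the relation $H_{a\cdot t}=\Ad_a[H_t]$, one has $\ldoublebracket nah,t\rdoublebracket=\ldoublebracket n,a\cdot t\rdoublebracket$ for all $n\in N$, $a\in A$ and $(h,t)\in\bigH$ (apply the defining equivalence relation of $\Oshima$ with the parameters $h_0=h^{-1}\in H_t$ and $a_0=a^{-1}\in A$). Consequently $q\circ\Psi=\Phi\circ\pi$, where $\pi\colon N\times A\times\bigH\to N\times\R^\Sigma$ is the map $(n,a,(h,t))\mapsto(n,a\cdot t)$. I would then check that $\pi$ is a surjective submersion: surjectivity holds because $a\mapsto(a^\alpha)_{\alpha\in\Sigma}$ sends $A$ onto $\R^\Sigma_+$ (the simple roots are linearly independent on $\mathfrak{a}$), so every $s\in\R^\Sigma$ is of the form $a\cdot t$ with $t_\alpha=\sign(s_\alpha)$ and $h=e$; the submersion property is immediate from Definition~\ref{def-a-action}, since for fixed $a$ the partial map $t\mapsto a\cdot t$ is already a linear isomorphism of $\R^\Sigma$.

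For injectivity of $\Phi$, suppose $\ldoublebracket n_1,s_1\rdoublebracket=\ldoublebracket n_2,s_2\rdoublebracket$ with $n_i\in N$. Unwinding the equivalence relation produces $h\in H_{s_1}$ and $a\in A$ with $n_1^{-1}n_2=ha$ and $s_2=a^{-1}\cdot s_1$. Then $h=(n_1^{-1}n_2)a^{-1}\in AN$, so $h\in H_{s_1}\cap AN=\{e\}$ by Lemma~\ref{lem-trivial-intersection}; hence $a=n_1^{-1}n_2\in A\cap N=\{e\}$ by the uniqueness of the Iwasawa decomposition, giving $n_1=n_2$ and $s_1=s_2$. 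This is the step I expect to be the crux, and it is precisely where Lemma~\ref{lem-trivial-intersection} does the work.

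Finally I assemble the pieces. From $q\circ\Psi=\Phi\circ\pi$ with $\Psi$ a diffeomorphism and $q$ a submersion, the left-hand side is a submersion; since $\pi$ is a surjective submersion, it follows that $\Phi$ is a submersion. An injective submersion has point fibers and is therefore a local diffeomorphism (forcing $\dim(N\times\R^\Sigma)=\dim\Oshima$; indeed both equal $\dim N+|\Sigma|$, using the Iwasawa count and the fact that every $H_t\cong K_I\ltimes\overline{N}_I$ has dimension $\dim K$), and hence an open map. Being also injective and continuous, $\Phi$ is a homeomorphism onto the open set $\BigCell=\Phi(N\times\R^\Sigma)=q(\cO)$, and therefore a diffeomorphism onto $\BigCell$, as claimed.
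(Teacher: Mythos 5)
Your proposal is correct and takes essentially the same route as the paper's proof: both factor the map through the fiberwise Iwasawa decomposition of Lemma~\ref{lem:NAH}, obtain injectivity from Lemma~\ref{lem-trivial-intersection}, read off the submersion property from the same commuting square, and conclude by a dimension/fiber argument. The only difference is bookkeeping: the paper composes $\Psi$ with the self-diffeomorphism $(n,a,(h,t))\mapsto(n,a,(a^{-1}ha,a^{-1}\cdot t))$ so that the vertical map becomes the plain projection $(n,a,(h,t))\mapsto(n,t)$, whereas you leave $\Psi$ untouched and absorb the twist into your projection $\pi(n,a,(h,t))=(n,a\cdot t)$.
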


\begin{proof} 
It follows from Lemma~\ref{lem-trivial-intersection} that the map is injective.
By composing the diffeomorphism in  Lemma~\ref{lem:NAH}
with the self-diffeo\-morphism
\begin{gather*}
       N \times A \times  \bigH   \longrightarrow  N \times A \times  \bigH   
      \\
      (n,a,(h,t)) \longmapsto (n,a,  (a^{-1}ha, a^{-1}t))  ,
\end{gather*}
we find that 
the map 
\begin{equation}
        \label{eq-big-cell-map}
\begin{gathered}
      N   \times A \times \bigH   \longrightarrow \bigG   
      \\
      (n,a,(h,t)) \longmapsto (nha, a^{-1}t) 
    \end{gathered}
\end{equation} 
is a diffeomorphism onto an open subset.  The composition of this, in turn, with the submersion $\bigG \to \Oshima$ is therefore   a submersion onto its image. Considering the commuting diagram
\[
\xymatrix@C=50pt{
 N   \times A \times \bigH \ar[d]  \ar[r]^-{\eqref{eq-big-cell-map}} &  \bigG   \ar[d]
 \\
 N\times \R^{\Sigma} \ar[r] & \Oshima ,
}
\]
in which the left-vertical morphism is the obvious projection map and the bottom map is the map in the statement of this lemma, we see that the bottom map is also a submersion onto its image, and hence, since 
\[
\dim (N) + \dim (\R^\Sigma) = \dim (\Oshima),
\]
it is a diffeomorphism onto its image.
\end{proof}
 
Obviously we can now cover $\Oshima$ with open sets of the form 
\[
\{\, \ldoublebracket gn,t\rdoublebracket : n\in N, t\in \R^\Sigma\,\}
\]
for varying $g\in G$.

\begin{proof}[Proof of Theorem \ref{thm-smooth-bundle-of-groups-over-oshima}]
After pulling back $\HBundle_\Oshima$ along the diffeomorphism in Lemma~\ref{lem-big-cell}, we obtain the space 
\[
 \bigl \{ \,
      (n,t, \gamma  )\in N\times \R^\Sigma \times G : \gamma \in \Ad_n [ H_t] \, \bigr \}  \subseteq N\times \R^\Sigma \times G.
\]
Under the self-diffeomorphism 
\[
\begin{gathered} 
N\times \R^\Sigma \times G  \longrightarrow N\times \R^\Sigma \times G 
\\
(n,t,\gamma) \longmapsto (n,t,n^{-1}\gamma n)
\end{gathered}
\]
this is in turn mapped to the subspace 
\begin{equation}
    \label{eq-conjugated-bundle-over-big-cell}
 \bigl \{ \,
      (n,t, \gamma  )\in N\times \R^\Sigma \times G : \gamma \in   H_t \, \bigr \} \subseteq N\times \R^\Sigma \times G.
\end{equation}
But \eqref{eq-conjugated-bundle-over-big-cell} is evidently a closed subset and a smooth submanifold.  The theorem follows from this.   
\end{proof}

\subsection{Embedding the Satake compactification into the Oshima space}
\label{sec-satake-in-oshima}
Let us now return to the Satake compactification $\Satake$, as defined in Section~\ref{sec:Mohsen}. 

\begin{definition}
    \label{def-oshima-plus}
Let $G=KAN$ be a  real reductive group, and let $\Oshima$ be the associated Oshima space. We shall write    
    \[
   \Oshima_{+} =  \{ \, \ldoublebracket g,t\rdoublebracket \in \Oshima : t_\alpha > 0 \,\,\,\forall \alpha \in \Sigma \,\}   ,
    \]
and then denote by  $\overline{\Oshima_{+}}$  the closure of $\Oshima_+$  in $\Oshima$.
\end{definition}

The space $\Oshima_+$ is an open subset and a single $G$-orbit, of the type   $G/K{\cdot} A_\Sigma$.

\begin{theorem}
\label{thm-satake-in-oshima}
 The  map 
\begin{equation}
    \label{eq:Satake_map}
\begin{gathered}
    \Oshima \longrightarrow \Subgp(G) 
    \\
   \ldoublebracket g,t\rdoublebracket\mapsto H_{\ldoublebracket g,t\rdoublebracket} 
\end{gathered}
\end{equation}
restricts to a $G$-equivariant homeomorphism from $\overline{\Oshima_{+}}$ to $\Satake$.
\end{theorem}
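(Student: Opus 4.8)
The plan is to check well-definedness, $G$-equivariance and continuity of the map \eqref{eq:Satake_map}, and then to reduce the homeomorphism statement to two independent claims — that the image of $\overline{\Oshima_+}$ equals $\Satake$, and that the restriction to $\overline{\Oshima_+}$ is injective — after which a standard compactness principle finishes the argument. Well-definedness of $\ldoublebracket g,t\rdoublebracket \mapsto H_{\ldoublebracket g,t\rdoublebracket} = \Ad_g[H_t]$ was recorded in Definition~\ref{def-h-g-t}, and $G$-equivariance is immediate from $\Ad_{\gamma g} = \Ad_\gamma\Ad_g$ (note $\overline{\Oshima_+}$ is $G$-invariant, being the closure of a single $G$-orbit, and $\Satake$ is conjugation-invariant). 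For continuity I would invoke Theorem~\ref{thm-smooth-bundle-of-groups-over-oshima}: the set $\HBundle_{\Oshima}\subseteq\Oshima\times G$ is closed and its projection to $\Oshima$ is a submersion, hence an open map, so by Lemma~\ref{lem:bundle} the induced map $\Oshima\to\Closed(G)$, which is exactly \eqref{eq:Satake_map}, is continuous. Since $\Oshima$ is compact (Theorem~\ref{thm:Oshima-compactness}), the closed subset $\overline{\Oshima_+}$ is compact, while $\Subgp(G)$ is Hausdorff; thus once the image is identified with $\Satake$ and injectivity is established, the restriction is automatically a homeomorphism onto $\Satake$, being a continuous bijection from a compact space to a Hausdorff space.

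For the image, observe that for $t\in\R^\Sigma_+$ we have $H_t=\Ad_{a_t}[K]$, so $\Ad_g[H_t]=\Ad_{ga_t}[K]$ is a conjugate of $K$, that is, a maximal compact subgroup; conversely every maximal compact subgroup arises this way (take $t=(1,\dots,1)$, so $H_t=K$, and vary $g$). Hence the image of the open orbit $\Oshima_+$ is precisely $\Satake_\Sigma$. Continuity then sends $\overline{\Oshima_+}$ into $\overline{\Satake_\Sigma}=\Satake$, while compactness makes that image a closed set containing $\Satake_\Sigma$, hence containing $\overline{\Satake_\Sigma}=\Satake$ (Definition~\ref{def:Satake}). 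So the image is exactly $\Satake$.

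To prepare for injectivity I would first identify $\overline{\Oshima_+}$ concretely as $\{\ldoublebracket g,t\rdoublebracket : t_\alpha\ge 0\ \forall\alpha\}$: the set $\{(g,t)\in\bigG : t_\alpha\ge 0\}$ is closed and saturated for the equivalence relation defining $\Oshima$ (that relation only replaces $t$ by $a^{-1}\cdot t$, whose coordinates $a^{-\alpha}t_\alpha$ remain nonnegative), so its image under the quotient map $\bigG\to\Oshima$ is closed; it contains $\Oshima_+$ hence $\overline{\Oshima_+}$, and is contained in $\overline{\Oshima_+}$ by approximating $t_\alpha\ge 0$ with $t_\alpha+\varepsilon>0$, giving equality. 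Restricting to nonnegative $t$ is exactly what removes the sign ambiguity $H_t=H_{|t|}$ that would otherwise defeat injectivity on all of $\Oshima$. Within this set, for a point with $I=\{\alpha:t_\alpha>0\}$, the linear independence of the simple restricted roots in $I$ lets me solve $a^\alpha=t_\alpha$ for $\alpha\in I$, so some $a\in A$ satisfies $a^{-1}\cdot t=t^I$, where $t^I_\alpha=1$ for $\alpha\in I$ and $0$ otherwise; the Oshima relation (with trivial $H_t$-factor) then rewrites the point as $\ldoublebracket g',t^I\rdoublebracket$, and $H_{t^I}=H_I$.

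The core of the argument, and where the genuine content sits, is injectivity, which rests on the rigidity of Theorem~\ref{thm-normalizer-of-h-i} (ultimately Tits' theorem, Theorem~\ref{thm-tits-theorem}). Suppose $\ldoublebracket g_1,t^{I_1}\rdoublebracket$ and $\ldoublebracket g_2,t^{I_2}\rdoublebracket$ have equal images, i.e. $\Ad_{g_1}[H_{I_1}]=\Ad_{g_2}[H_{I_2}]$. Passing to Lie algebras shows $\lie{h}_{I_1}$ and $\lie{h}_{I_2}$ are conjugate in $G$, so $I_1=I_2=:I$ by the second assertion of Theorem~\ref{thm-normalizer-of-h-i}. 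Then $g_2^{-1}g_1$ normalizes $H_I$, hence normalizes $\lie{h}_I$, so $g_2^{-1}g_1\in\operatorname{Normalizer}_G(\lie{h}_I)=A_IH_I$; write $g_2^{-1}g_1=ah$ with $a\in A_I$ and $h\in H_I$. Since $A_I$ fixes $t^I$ and normalizes $H_I=H_{t^I}$, I rewrite $ah=h'a$ with $h'=aha^{-1}\in H_{t^I}$, whereupon the Oshima relation gives $\ldoublebracket g_2,t^I\rdoublebracket=\ldoublebracket g_2h'a,\,a^{-1}\cdot t^I\rdoublebracket=\ldoublebracket g_1,t^I\rdoublebracket$, using $a^{-1}\cdot t^I=t^I$. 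This yields injectivity and completes the proof. The only delicate inputs are the two parts of Theorem~\ref{thm-normalizer-of-h-i}; everything else is bookkeeping with the quotient presentation of $\Oshima$ together with the compact-to-Hausdorff principle.
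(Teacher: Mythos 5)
Your proof is correct and follows essentially the same route as the paper's: continuity via Theorem~\ref{thm-smooth-bundle-of-groups-over-oshima} and Lemma~\ref{lem:bundle}, surjectivity onto $\Satake$ by identifying the image of $\Oshima_+$ with the set of maximal compact subgroups and using compactness/density, injectivity via the normalizer rigidity of Theorem~\ref{thm-normalizer-of-h-i}, and the compact-to-Hausdorff principle to conclude. The only difference is cosmetic: you explicitly identify $\overline{\Oshima_{+}}$ with $\{\,\ldoublebracket g,t\rdoublebracket : t_\alpha \ge 0\ \forall \alpha\,\}$ and normalize representatives to the form $\ldoublebracket g,t^I\rdoublebracket$ before applying the rigidity, whereas the paper leaves this implicit and matches the sign patterns of $t'$ and $t''$ directly via Proposition~\ref{prop:h_t_structure}.
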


\begin{proof}
Let us first show that the restricted map $\overline{\Oshima_{+}}\to \Satake$ is injective. According to Proposition~\ref{prop:h_t_structure}, if 
$H_{\ldoublebracket g',t'\rdoublebracket} = H_{\ldoublebracket g'',t''\rdoublebracket}$ then for every $\alpha\in\Sigma$, either $t'_\alpha$ and $t''_\alpha$ are both zero or both strictly positive.  Thus 
\[
H_{\ldoublebracket g',t'\rdoublebracket} = H_{\ldoublebracket g'',t''\rdoublebracket} \quad \Rightarrow \quad 
\exists a\in A:  a \cdot t'=t'' .
\]
Therefore, setting $g{=}g''a$, we need only show that 
\[
H_{\ldoublebracket g',t'\rdoublebracket} = H_{\ldoublebracket g,t'\rdoublebracket} \quad \Rightarrow \quad 
\ldoublebracket g',t'\rdoublebracket = \ldoublebracket g,t'\rdoublebracket.
\]
But 
\[
H_{\ldoublebracket g',t'\rdoublebracket} = H_{\ldoublebracket g,t'\rdoublebracket} \quad \Rightarrow \quad 
\Ad_{g^{-1}g'}[H_{t'}] = H_{t'},
\]
and the required result therefore follows from Theorem~\ref{thm-normalizer-of-h-i}. 

Next, Theorem \ref{thm-smooth-bundle-of-groups-over-oshima} and Lemma \ref{lem:bundle} imply that the map  $\Oshima \to \Subgp(G)$ is continuous.  Since $\overline{\Oshima_+}$ is compact, and the image of $\Oshima_+$ in $\Satake$ is the dense open subset of all maximal compact subgroups, the image of $\overline{\Oshima_+}$ 
is $\Satake$. Being a bijection from a compact space, the map is a homeomorphism, and $G$-equivariance is evident from   Definition~\ref{def-h-g-t}.
\end{proof}

Theorem~\ref{thm-satake-in-oshima} shows that  that the Satake compactification decomposes into finitely many $G$-orbits, as in Section \ref{sec:orbits}.   To be specific, for $I\subseteq \Sigma$ let $t_I\in \R^\Sigma$ be defined by 
\begin{equation}
    \label{eq-def-of-t-i}
t_{I,\alpha} = \begin{cases}   1 & \alpha \in I \\ 0 & \alpha \notin I,
\end{cases}
\end{equation}
and let $x_I$ be the image under \eqref{eq:Satake_map} of $\ldoublebracket e,t_I \rdoublebracket \in \overline{\Oshima_+}$.
Using \eqref{eq-counting-orbits-in-oshima} shows that 
$\{\,x_I : I \subseteq \Sigma \,\}$ is a complete set of orbit representatives for 
the Satake compactification. By Lemma~\ref{lem-orbit-structure}, the $G$-orbit $\Satake_I$ of $t_I$ is isomorphic to $G /   A_IH_I$, and there is therefore 
 an isomorphism of $G$-sets
\begin{equation}
    \label{eq-orbit-decomposition-of-satake}
 \Satake =  \bigsqcup_{I\subseteq \Sigma} \Satake_I \cong \bigsqcup_{I\subseteq \Sigma}  G /   A_IH_I 
 \end{equation} 
 Concerning the topology on this disjoint union, it follows from the continuity of the map \eqref{eq-counting-orbits-in-oshima}, and a simple additional computation, that the orbit $\Satake_I\subseteq \overline{\Satake_J}$ if and only if $I\subseteq J$.

\section{The Oshima groupoid} 
\label{sec-oshima-groupoid}

In this section we shall construct our \emph{Oshima groupoid}, $\Groupoid_{\Oshima}$,  whose space of units is the Oshima space.   The reduction of $\Groupoid_\Oshima$ to the Satake compactification $\overline{\Oshima_+} \cong \Satake$ will be homeomorphic to the Satake groupoid $\Groupoid_\Satake$ of Definition \ref{def:Satake_groupoid}, thus endowing the Satake groupoid with a smooth structure.

\subsection{Normal subgroupoids and   quotients of Lie groupoids}

\begin{definition}[See for instance {\cite[Sec.\,2.2]{Mackenzie}}]
\label{def-normal-lie-sugroupoid}
Let $\Groupoid$ be a Lie groupoid. A  \emph{normal Lie subgroupoid}
of $\Groupoid$ is a Lie subgroupoid $\Subgroupoid $ with the following properties:
    \begin{enumerate}[\rm (i)]

    \item the inclusion $\Subgroupoid\to \Groupoid$ is a closed embedding of smooth manifolds,

    \item $\Subgroupoid$ includes every identity morphism of $\Groupoid$,
    
    \item $\Subgroupoid$ consists  entirely of isotropy elements (that is, elements whose source and target are equal), and 
    
    \item if  $\alpha\in\Subgroupoid$ and  $\gamma\in\Groupoid$, and if   $\operatorname{target}(\alpha)=\operatorname{source}(\gamma)$, then  $\gamma\circ \alpha\circ \gamma^{-1} \in \Subgroupoid$.
    \end{enumerate}
\end{definition}

If $\Subgroupoid$ is a normal Lie subgroupoid of a Lie groupoid $\Groupoid$, then we may place on $\Groupoid$ the equivalence relation 
\[
 \gamma \simeq \eta 
 \quad \Leftrightarrow \quad
 \operatorname{target}(\gamma) = \operatorname{target}(\eta) \,\,\, \text{and} \,\,\, \gamma^{-1} \eta \in \cH.
\]
The set $\Groupoid/\Subgroupoid$ of equivalence classes is a groupoid with the same object space as $\Groupoid$ and the structure maps 
\[
\operatorname{source}([\gamma]) = \operatorname{source}(\gamma),\quad 
\operatorname{target}([\gamma]) = \operatorname{target}(\gamma)\quad \text{and} \quad 
[\gamma_1]\circ [\gamma_2] = [\gamma_1\circ \gamma_2].
\]

\begin{lemma}
   If $\Subgroupoid$ is a normal Lie subgroupoid of a Lie groupoid $\Groupoid$, then there is a unique smooth manifold structure on $\Groupoid / \Subgroupoid$ for which the quotient map $\Groupoid\to \Groupoid/\Subgroupoid$ is a submersion.  With this smooth manifold structure, $\Groupoid/\Subgroupoid$ is a Lie groupoid.
\end{lemma}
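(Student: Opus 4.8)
The plan is to produce the smooth structure from Godement's criterion (Lemma~\ref{lem-godement}), applied to the graph $R\subseteq\Groupoid\times\Groupoid$ of the equivalence relation $\simeq$, and then to check that the groupoid operations descend smoothly. Throughout write $s,t\colon\Groupoid\to\Groupoid^{(0)}$ for the (submersive, surjective) source and target maps and $\Groupoid^{(0)}$ for the unit space; by normality~(ii) the subgroupoid $\Subgroupoid$ has the same unit space, so $t\colon\Subgroupoid\to\Groupoid^{(0)}$ is again a surjective submersion. Because $\Subgroupoid$ consists of isotropy elements (normality~(iii)), a one-line check shows $\gamma\simeq\eta$ forces $t(\gamma)=t(\eta)$ \emph{and} $s(\gamma)=s(\eta)$, so that
\[
R=\{\,(\gamma,\eta)\in\Groupoid\times\Groupoid : t(\gamma)=t(\eta),\ \gamma^{-1}\eta\in\Subgroupoid\,\}.
\]
To verify condition~(i) of Godement, note first that $\Groupoid\,{}_{t}\!\times_{t}\Groupoid=\{(\gamma,\eta):t(\gamma)=t(\eta)\}$ is a closed embedded submanifold of $\Groupoid\times\Groupoid$, being the preimage of the diagonal under the submersion $(t,t)$. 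On it the division map $\delta\colon(\gamma,\eta)\mapsto\gamma^{-1}\eta$ is a submersion, since it factors as the second-factor projection $\Groupoid\,{}_{s}\!\times_{t}\Groupoid\to\Groupoid$ (a pullback of the submersion $s$) precomposed with the diffeomorphism $(\gamma,\eta)\mapsto(\gamma,\gamma^{-1}\eta)$. As $\Subgroupoid$ is closed and embedded in $\Groupoid$ by normality~(i), the preimage $R=\delta^{-1}(\Subgroupoid)$ is a closed embedded submanifold of $\Groupoid\,{}_{t}\!\times_{t}\Groupoid$, hence of $\Groupoid\times\Groupoid$.

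For condition~(ii) I must show the two projections $R\to\Groupoid$ are submersions. Since $R$ is the graph of an equivalence relation it is symmetric, and the flip $(\gamma,\eta)\mapsto(\eta,\gamma)$ restricts to a diffeomorphism of $R$ interchanging the two projections (symmetry uses that $\Subgroupoid$ is closed under inversion), so it suffices to treat $\mathrm{pr}_1$. I would parametrize $R$ by $D=\Groupoid\,{}_{s}\!\times_{t}\Subgroupoid$ via $\mu\colon(\gamma,\alpha)\mapsto(\gamma,\gamma\alpha)$: this lands in $R$, is injective by left cancellation, is an immersion (after the diffeomorphism $(\gamma,\eta)\mapsto(\gamma,\gamma^{-1}\eta)$ it is the inclusion $D\hookrightarrow\Groupoid\,{}_{s}\!\times_{t}\Groupoid$), and is onto $R$ since $(\gamma,\eta)\in R$ gives $\alpha=\gamma^{-1}\eta\in\Subgroupoid$ with $\mu(\gamma,\alpha)=(\gamma,\eta)$. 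A dimension count, $\dim D=\dim\Groupoid-\dim\Groupoid^{(0)}+\dim\Subgroupoid=\dim R$, then makes $\mu$ a bijective local diffeomorphism, hence a diffeomorphism onto $R$. Finally $\mathrm{pr}_1\circ\mu$ is the projection $D\to\Groupoid$, a pullback of the submersion $t\colon\Subgroupoid\to\Groupoid^{(0)}$, hence a submersion. Godement's criterion now yields the unique smooth structure on $\Groupoid/\Subgroupoid$ for which the quotient map $q$ is a submersion.

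It remains to check that $\Groupoid/\Subgroupoid$ is a Lie groupoid. Its unit space is $\Groupoid^{(0)}$; as $s,t$ are constant on $\simeq$-classes they descend to $\bar s,\bar t$, which are submersions and are smooth because $\bar s\circ q=s$ and $q$ is a surjective submersion. Multiplication descends: writing $\gamma_i'=\gamma_i\alpha_i$ with $\alpha_i\in\Subgroupoid$, one computes
\[
(\gamma_1\gamma_2)^{-1}(\gamma_1'\gamma_2')=(\gamma_2^{-1}\alpha_1\gamma_2)\,\alpha_2,
\]
and normality~(iv) places $\gamma_2^{-1}\alpha_1\gamma_2$ in $\Subgroupoid$, so the product lies in $\Subgroupoid$; a similar computation using~(iv) shows inversion descends. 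Smoothness of the descended multiplication and inversion then follows from the universal property of the surjective submersions $q$ and $q\times q$ (restricted to composable pairs), exactly as for $\bar s,\bar t$.

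I expect the genuine obstacle to be Godement's condition~(ii)---the submersivity of the projections of $R$---since that is where one must actually exploit the Lie-groupoid structure of $\Subgroupoid$ and $\Groupoid$, through the fibered-product parametrization $\mu$ and the submersivity of the division and source/target maps, rather than merely set-theoretic bookkeeping. The descent of the structure maps is routine, with normality~(iv) supplying the single algebraic input, namely well-definedness of the product.
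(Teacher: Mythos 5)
Your proposal is correct and follows exactly the route the paper takes: the paper's entire proof is the one-line remark that the lemma ``is a consequence of Godement's criterion, Lemma~\ref{lem-godement}.'' You have simply (and accurately) supplied the details that the authors leave to the reader—verification of Godement's two conditions via the fibered-product parametrization of $R$, and the descent of the structure maps using normality (iv) and the universal property of surjective submersions.
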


\begin{proof} 
This is a consequence of Godement's criterion, Lemma~\ref{lem-godement}.
\end{proof}

\subsection{The transformation groupoid and the Oshima groupoid}
 The transformation groupoid associated to the action of $G$ on the Oshima space $\Oshima$ is of course 
\[
G \ltimes \Oshima = \bigl \{\, 
\bigl ( \ldoublebracket g_2,t_2\rdoublebracket, g , \ldoublebracket g_1,t_1\rdoublebracket \bigr ) \in \Oshima\times G \times \Oshima : \ldoublebracket g_2, t_2 \rdoublebracket = g {\cdot}\ldoublebracket g_1 ,t_1\rdoublebracket
\,\bigr \},
\]
with the structure maps 
   \[
\begin{aligned}
\operatorname{source}\bigl ( \ldoublebracket g_2,t_2\rdoublebracket, g , \ldoublebracket g_1,t_1\rdoublebracket \bigr )  
    & = \ldoublebracket g_1,t_1\rdoublebracket
\\
\operatorname{target} \bigl ( \ldoublebracket g_2,t_2\rdoublebracket, g , \ldoublebracket g_1,t_1\rdoublebracket \bigr )
    & = \ldoublebracket g_2,t_1 \rdoublebracket
\\
 \bigl ( \ldoublebracket g_3,t_3\rdoublebracket, g'' , \ldoublebracket g_2,t_2\rdoublebracket \bigr ) \circ  \bigl ( \ldoublebracket g_2,t_2\rdoublebracket, g' , \ldoublebracket g_1,t_1\rdoublebracket \bigr ) & = \bigl( \ldoublebracket g_3,t_3 \rdoublebracket, g''g', \ldoublebracket g_1,t_1 \rdoublebracket\bigr )   .
\end{aligned}
\]
The smooth famliy of groups $\Subgroupoid_{\Oshima}$ in Definition~\ref{def-h-g-t} may be viewed as a normal Lie subgroupoid of the transformation groupoid: 

\begin{lemma}
\label{lem-normal-subgroupoid-of-transformation-groupoid}
The subspace  
\begin{equation*}
\Subgroupoid_{\Oshima} \cong \bigl \{\, \bigl (\ldoublebracket  g,t\rdoublebracket, h , \ldoublebracket g,t\rdoublebracket \bigr ) :  \ldoublebracket g,t\rdoublebracket\in \Oshima,\,\, h\in H_{\ldoublebracket g,t\rdoublebracket}\,\bigr \} 
\subseteq G \ltimes \Oshima
\end{equation*}
is a normal Lie subgroupoid of the transformation groupoid. 
\end{lemma}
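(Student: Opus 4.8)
The plan is to verify in turn the four defining conditions of Definition~\ref{def-normal-lie-sugroupoid}, after first confirming that $\Subgroupoid_{\Oshima}$ is a well-defined subset of the transformation groupoid and is closed under the groupoid operations.

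To begin, I would note that $H_{\ldoublebracket g,t\rdoublebracket}=\Ad_g[H_t]$ is well-defined independently of the representative $(g,t)$, as recorded just before Definition~\ref{def-h-g-t}. A triple $\bigl(\ldoublebracket g,t\rdoublebracket, h, \ldoublebracket g,t\rdoublebracket\bigr)$ with $h\in H_{\ldoublebracket g,t\rdoublebracket}$ belongs to $G\ltimes\Oshima$ exactly when $h$ fixes $\ldoublebracket g,t\rdoublebracket$; writing $h=\Ad_g(h_0)$ with $h_0\in H_t$ gives $hg=gh_0$, and since $g^{-1}(gh_0)=h_0\in H_t$ the equivalence relation defining $\bigG/\bigH$ yields $\ldoublebracket hg,t\rdoublebracket=\ldoublebracket g,t\rdoublebracket$. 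This simultaneously shows that every element of $\Subgroupoid_{\Oshima}$ is an isotropy element, giving condition~(iii), and it is immediate that the identity morphisms $\bigl(\ldoublebracket g,t\rdoublebracket,e,\ldoublebracket g,t\rdoublebracket\bigr)$ lie in $\Subgroupoid_{\Oshima}$ since $e\in H_{\ldoublebracket g,t\rdoublebracket}$, giving condition~(ii). Closure under composition and inversion over a fixed unit follows at once from the fact that $H_{\ldoublebracket g,t\rdoublebracket}$ is a group, so $\Subgroupoid_{\Oshima}$ is indeed a subgroupoid.

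For the closed-embedding condition~(i), I would pass through the diffeomorphism $G\ltimes\Oshima\cong G\times\Oshima$ that remembers the middle group element and the source (the target being determined by these). Under this identification $\Subgroupoid_{\Oshima}$ corresponds to $\{\,(h,\ldoublebracket g,t\rdoublebracket): h\in H_{\ldoublebracket g,t\rdoublebracket}\,\}$, which is precisely the group bundle $\HBundle_{\Oshima}$ of Definition~\ref{def-h-g-t} up to a transposition of factors. Theorem~\ref{thm-smooth-bundle-of-groups-over-oshima} already supplies that $\HBundle_{\Oshima}$ is a closed submanifold of $\Oshima\times G$, so condition~(i) is inherited immediately.

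The heart of the matter is the normality condition~(iv), which I would settle by a single $\Ad$-equivariance calculation. Taking $\alpha=\bigl(\ldoublebracket g,t\rdoublebracket,h,\ldoublebracket g,t\rdoublebracket\bigr)\in\Subgroupoid_{\Oshima}$ and a composable $\gamma=\bigl(\ldoublebracket g',t'\rdoublebracket,\gamma_0,\ldoublebracket g,t\rdoublebracket\bigr)$ with $\ldoublebracket g',t'\rdoublebracket=\ldoublebracket\gamma_0 g,t\rdoublebracket$, the composition convention gives $\gamma\circ\alpha\circ\gamma^{-1}=\bigl(\ldoublebracket g',t'\rdoublebracket,\gamma_0 h\gamma_0^{-1},\ldoublebracket g',t'\rdoublebracket\bigr)$. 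The well-definedness relation then reads $H_{\ldoublebracket g',t'\rdoublebracket}=\Ad_{\gamma_0 g}[H_t]=\Ad_{\gamma_0}\bigl[\Ad_g[H_t]\bigr]=\Ad_{\gamma_0}\bigl[H_{\ldoublebracket g,t\rdoublebracket}\bigr]$, so $\gamma_0 h\gamma_0^{-1}=\Ad_{\gamma_0}(h)\in H_{\ldoublebracket g',t'\rdoublebracket}$, which is exactly condition~(iv). I expect the only real care to be needed in the bookkeeping of the composition convention and in tracking that $H_{\ldoublebracket g',t'\rdoublebracket}$ is to be computed from the representative $(\gamma_0 g,t)$; every other step is a direct reading of the constructions already in place.
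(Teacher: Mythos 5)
Your proposal is correct and follows essentially the same route as the paper's (much terser) proof: condition (i) is deduced from Theorem~\ref{thm-smooth-bundle-of-groups-over-oshima}, conditions (ii) and (iii) are direct verifications, and condition (iv) comes from the conjugation relation $\Ad_{\gamma}H_{\ldoublebracket g,t\rdoublebracket} = H_{\gamma\cdot\ldoublebracket g,t\rdoublebracket}$. Your write-up merely makes explicit the bookkeeping (the identification of $\Subgroupoid_{\Oshima}$ with $\HBundle_{\Oshima}$ and the composition calculation) that the paper leaves to the reader.
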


\begin{proof}
    Condition (i) in Definition~\ref{def-normal-lie-sugroupoid} is a consequence of Theorem~\ref{thm-smooth-bundle-of-groups-over-oshima}.   Conditions (ii) and (iii) are clear, and condition (iv) follows from the conjugation relation  $\Ad_{\gamma}H_{\ldoublebracket g,t\rdoublebracket} =H_{\gamma {\cdot} \ldoublebracket  g,t\rdoublebracket} $. 
\end{proof}

\begin{definition}
    \label{def-oshima-groupoid}
    Let $G=KAN$ be a real reductive group, and let $\Oshima$ be its Oshima space. The \emph{Oshima groupoid} $\Groupoid_{\Oshima}$ is the quotient of the transformation groupoid $G \ltimes \Oshima$ by the normal Lie subgroupoid $\Subgroupoid_{\Oshima}$ in Lemma~\ref{lem-normal-subgroupoid-of-transformation-groupoid}. 
\end{definition}

\subsection{Embedding the Satake groupoid into the Oshima groupoid}
\label{sec:SatakeGroupoidinOshimaGroupoid}

\begin{definition}
\label{def-notation-for-elements-of-oshima-groupoid}
    We shall use the notation 
    \[
    \ldoublebracket g,g_1,t\rdoublebracket \in \Groupoid_\Oshima
    \]
    to indicate the equivalence class  of the morphism 
$
\bigl ( \ldoublebracket g_2,t\rdoublebracket, g , \ldoublebracket g_1,t\rdoublebracket \bigr ) 
$
in $G\ltimes \Oshima$.
\end{definition}

\begin{theorem}
    \label{thm-satake-groupoid-is-oshima-groupoid}
    The map 
    \[
    \Groupoid_{\Oshima}\ni  \ldoublebracket g,g_1,t\rdoublebracket \longmapsto g H_{\ldoublebracket{g_1,t\rdoublebracket}} \in \Closed(G)
    \]
    restricts to   an isomorphism of topological groupoids from  the reduction of the   Oshima groupoid to $ \overline{\Oshima_+} \subseteq \Oshima$ to the Satake  groupoid $\Groupoid_{\Satake}$.
\end{theorem}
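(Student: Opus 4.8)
The plan is to work first at the level of the transformation groupoid $G\ltimes\Oshima$ and then descend to $\Groupoid_\Oshima$. The first step is to unwind the equivalence relation defining the quotient by the normal subgroupoid $\Subgroupoid_\Oshima$ (Definition~\ref{def-normal-lie-sugroupoid}): two classes $\ldoublebracket g,g_1,t\rdoublebracket$ and $\ldoublebracket g',g_1',t'\rdoublebracket$ coincide exactly when $\ldoublebracket g_1,t\rdoublebracket=\ldoublebracket g_1',t'\rdoublebracket$ (equal source) and $g^{-1}g'\in H_{\ldoublebracket g_1,t\rdoublebracket}$ (the latter automatically forcing equal targets, since $H_{\ldoublebracket g_1,t\rdoublebracket}$ lies in the stabilizer of $\ldoublebracket g_1,t\rdoublebracket$). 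Given this description, well-definedness of the assignment $\ldoublebracket g,g_1,t\rdoublebracket\mapsto gH_{\ldoublebracket g_1,t\rdoublebracket}$ is immediate: $\ldoublebracket g_1,t\rdoublebracket=\ldoublebracket g_1',t'\rdoublebracket$ gives $H_{\ldoublebracket g_1,t\rdoublebracket}=H_{\ldoublebracket g_1',t'\rdoublebracket}$ through the formula $H_{\ldoublebracket g,t\rdoublebracket}=\Ad_g[H_t]$ of Definition~\ref{def-h-g-t}, while $g^{-1}g'\in H_{\ldoublebracket g_1,t\rdoublebracket}$ yields $g'H_{\ldoublebracket g_1',t'\rdoublebracket}=gH_{\ldoublebracket g_1,t\rdoublebracket}$.

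Next I would verify the algebraic groupoid axioms against the coset-groupoid structure $\operatorname{source}(gS)=S$, $\operatorname{target}(gS)=gSg^{-1}$, with setwise product and inverse. Using the equivariance $H_{\gamma\cdot\ldoublebracket g_1,t\rdoublebracket}=\Ad_\gamma H_{\ldoublebracket g_1,t\rdoublebracket}$, the source of $gH_{\ldoublebracket g_1,t\rdoublebracket}$ is $H_{\ldoublebracket g_1,t\rdoublebracket}$ and its target is $gH_{\ldoublebracket g_1,t\rdoublebracket}g^{-1}=H_{\ldoublebracket gg_1,t\rdoublebracket}$, matching the source and target of $\ldoublebracket g,g_1,t\rdoublebracket$ once units are identified with subgroups; composability turns the equality of a source and a target into precisely the conjugation relation that collapses the setwise product of two cosets to the single coset $g'gH_{\ldoublebracket g_1,t\rdoublebracket}$, and the inverse of $gH_{\ldoublebracket g_1,t\rdoublebracket}$ is the right coset $H_{\ldoublebracket g_1,t\rdoublebracket}g^{-1}$, exactly as required. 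For the bijection onto $\Groupoid_\Satake$ I invoke Theorem~\ref{thm-satake-in-oshima}: over $\overline{\Oshima_+}$ the map $\ldoublebracket g_1,t\rdoublebracket\mapsto H_{\ldoublebracket g_1,t\rdoublebracket}$ is a homeomorphism onto $\Satake$, so the subgroups occurring as sources exhaust $\Satake$, whence every coset $gS$ with $S\in\Satake$ is hit (surjectivity); for injectivity I recover the source $S=(gS)^{-1}(gS)=H_{\ldoublebracket g_1,t\rdoublebracket}$ from the coset, use injectivity of that same homeomorphism to get $\ldoublebracket g_1,t\rdoublebracket=\ldoublebracket g_1',t'\rdoublebracket$, read off $g^{-1}g'\in H_{\ldoublebracket g_1,t\rdoublebracket}$ from $gH=g'H$, and conclude equality of the classes by the characterization of the first paragraph.

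It remains to promote this bijection $\Psi$ to a homeomorphism of topological groupoids. Identifying the reduction of $G\ltimes\Oshima$ over the invariant set $\overline{\Oshima_+}$ with $G\times\overline{\Oshima_+}$, I would factor the map as $\Phi=\Psi\circ q$, where $q\colon G\times\overline{\Oshima_+}\to\Groupoid_\Oshima$ is the restriction to this saturated subset of the submersive quotient map (hence a quotient map) and $\Phi(g,\ldoublebracket g_1,t\rdoublebracket)=gH_{\ldoublebracket g_1,t\rdoublebracket}$. Since $\ldoublebracket g_1,t\rdoublebracket\mapsto H_{\ldoublebracket g_1,t\rdoublebracket}$ is continuous into $\Subgp(G)$ (Theorem~\ref{thm-smooth-bundle-of-groups-over-oshima} together with Lemma~\ref{lem:bundle}) and left translation $(g,S)\mapsto gS$ is continuous by Lemma~\ref{lem:continuity}, $\Phi$ is continuous, and as $q$ is a quotient map so is $\Psi$. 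The hard part will be continuity of the inverse, and this is where Lemma~\ref{lem-coset-groupoid-is-closed-subset} carries the weight: given a convergent sequence $C_n=g_nS_n\to C=gS$ in $\Groupoid_\Satake$, that lemma lets me choose representatives with $g_n\to g$ and $S_n\to S$ in $\Subgp(G)$; transporting $S_n\to S$ through the inverse of the homeomorphism $\Satake\cong\overline{\Oshima_+}$ produces convergent unit points $x_n\to x$ with $H_{x_n}=S_n$, so that $(g_n,x_n)\to(g,x)$ and, applying $q$, $\Psi^{-1}(C_n)=q(g_n,x_n)\to q(g,x)=\Psi^{-1}(C)$. All spaces here are second countable and metrizable, so sequential continuity of $\Psi^{-1}$ suffices, and $\Psi$ is an isomorphism of topological groupoids.
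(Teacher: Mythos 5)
Your proposal is correct and follows essentially the same route as the paper: descend the coset map from the transformation groupoid $G\ltimes\overline{\Oshima_+}$, identify its kernel with the reduction of the normal subgroupoid $\Subgroupoid_\Oshima$ to get a bijection, and settle the topology using Lemma~\ref{lem-coset-groupoid-is-closed-subset} together with the object-level homeomorphism of Theorem~\ref{thm-satake-in-oshima}. Your write-up is somewhat more explicit than the paper's (spelling out the quotient-map argument for forward continuity and the sequential argument for the inverse, where the paper compresses both directions into a single convergence equivalence), but the ingredients and structure are the same.
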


\begin{proof}
    A straightforward check shows that the map 
    \begin{align*}
        G \ltimes \overline{\Oshima_+}&\longrightarrow  \Groupoid_\Satake  \\
      (\ldoublebracket g_2,t_2\rdoublebracket, g, \ldoublebracket g_1,t_1 \rdoublebracket) &
      \longmapsto g H_{\ldoublebracket g_1,t_1 \rdoublebracket}
    \end{align*}
    is a morphism of groupoids; it is the homeomorphism of Theorem~\ref{thm-satake-in-oshima} on objects.  Moreover, the preimage of the object space of $\Satake$ of $\Groupoid_\Satake$ is precisely the reduction of the normal subgroupoid of Lemma \ref{lem-normal-subgroupoid-of-transformation-groupoid} to $\overline{M_+}$.  From this it follows easily that this map descends to a bijection from the reduction of $\Groupoid_\Oshima$ to $\overline{\Oshima_+}$ onto $\Groupoid_\Satake$.  
    Lemma \ref{lem-coset-groupoid-is-closed-subset} shows that we have convergence
    \[
      \gamma_iH_{\ldoublebracket g_i,t_i \rdoublebracket} \to 
      \gamma H_{\ldoublebracket g,t \rdoublebracket}
    \]
    in the Satake groupoid if and only if we have 
    \begin{equation*}
    H_{\ldoublebracket g_i,t_i \rdoublebracket} \to H_{\ldoublebracket g,t \rdoublebracket}, \qquad \text{and} \qquad
    \gamma_ih_i \to \gamma h, \quad  \exists h_i\in H_{\ldoublebracket g_i,t_i \rdoublebracket},\ \exists h\in H_{\ldoublebracket g,t \rdoublebracket}.
    \end{equation*}
    By Theorem \ref{thm-satake-in-oshima}, this is equivalent to
    \[
      \ldoublebracket \gamma_i,g_i,t_i \rdoublebracket 
      \to \ldoublebracket \gamma,g,t \rdoublebracket
    \]
    in the Oshima groupoid $\Groupoid_\Oshima$.
    The result follows.
\end{proof}

\section{The b-groupoid}
\label{sec:manifolds-with-corners}

In this section we shall identify the Oshima   groupoid with another Lie groupoid that is constructed using only geometric---rather than Lie-theoretic---features of the Oshima  space.

\subsection{Local coordinates on the Oshima groupoid}
We proved in Lemma~\ref{lem-big-cell}  that  the obvious map
\[
N \times \R^\Sigma   \longrightarrow  \bigl \{\, \ldoublebracket n,t \rdoublebracket \in \Oshima : n\in N,\,\,\, t\in \R^\Sigma\,\bigr \} 
\]
is a diffeomorphism onto an open subset  $\BigCell\subseteq \Oshima$. We shall now describe the   reduced  groupoid  
\[
\Groupoid_{\Oshima}\vert_{\BigCell} = \{\, \gamma \in \Groupoid_{\Oshima} : \operatorname{source}(\gamma)\in \BigCell \,\,\,\& \,\,\, \operatorname{target}(\gamma)\in \BigCell \,\}.
\]
Form the smooth manifold 
\[
\mathcal{W} = \bigl \{ \, 
\bigl ((n_2,t_2),a,(n_1,t_1)\bigr ) \in N {\times}\R^\Sigma {\times} A {\times} N {\times} \R^\Sigma
: t_2 = a \cdot t_1
\,\bigr \} .
\]
This is a Lie groupoid with object space $N{\times } \R^\Sigma$, and the  groupoid operations
\[
\begin{aligned}
\operatorname{source} ((n_2,t_2),a,(n_1,t_1)\bigr ) 
    & =(n_1,t_1)
    \\
\operatorname{target} ((n_2,t_2),a,(n_1,t_1)\bigr )  
    & =(n_2,t_2)
    \\
((n_3,t_3),a_2,(n_2,t_2)\bigr )   \circ ((n_2,t_2),a_1,(n_1,t_1)\bigr )  
    & = \bigl ( (n_3,t_3),a_2a_1,(n_1,t_1)\bigr ) .
\end{aligned}
\]
It is the Cartesian product of the pair groupoid on $N$ with the transformation groupoid for the action   $A$ on $\R^\Sigma$ in Definition~\ref{def-a-action}.

\begin{theorem}
\label{thm-big-cell-in-the-groupoid}
The smooth map 
\[
\begin{gathered} 
\mathcal {W}   \longrightarrow  \Groupoid_{\Oshima}\vert_{\BigCell}
\\
\bigl ( (n_2,t_2),a,(n_1,t_1)\bigr )  \longmapsto \ldoublebracket n_2an_1^{-1}, n_1,t_1\rdoublebracket
\end{gathered}
\]
\textup{(}notation of Definition~\ref{def-notation-for-elements-of-oshima-groupoid}\textup{)} is an isomorphism of Lie groupoids.
\end{theorem}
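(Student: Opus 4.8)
The plan is to verify in turn that the displayed map $F\colon \mathcal{W}\to\Groupoid_{\Oshima}\vert_{\BigCell}$ is a homomorphism of groupoids, that it is a bijection, and finally that it is a diffeomorphism. Smoothness of $F$ itself is clear, since it is assembled from multiplication in $G$ together with the submersion $\bigG\to\Oshima$; the real content will be the construction of a smooth inverse. First I would record how $F$ interacts with the structure maps. The source and target of $F((n_2,t_2),a,(n_1,t_1))=\ldoublebracket n_2an_1^{-1},n_1,t_1\rdoublebracket$ are $\ldoublebracket n_1,t_1\rdoublebracket$ and $\ldoublebracket n_2a,t_1\rdoublebracket=\ldoublebracket n_2,a\cdot t_1\rdoublebracket=\ldoublebracket n_2,t_2\rdoublebracket$, using the defining $A$-relation on $\Oshima$ and the constraint $t_2=a\cdot t_1$ built into $\mathcal{W}$; thus $F$ carries source to source and target to target. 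That $F$ sends identities to identities is immediate (take $a=e$), and the composition law is a direct computation: lifting both factors to the transformation groupoid and multiplying the $G$-components gives $(n_3a_2n_2^{-1})(n_2a_1n_1^{-1})=n_3a_2a_1n_1^{-1}$, which is exactly the $G$-component of the image of the composite $((n_3,t_3),a_2a_1,(n_1,t_1))$. Hence $F$ is a groupoid homomorphism.

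Next, bijectivity. Since Lemma~\ref{lem-big-cell} identifies $\BigCell$ with $N\times\R^\Sigma$, the pair $(n_1,t_1)$ is read off from the source and $(n_2,t_2)$ from the target, so any two preimages of a given morphism share these data and can differ only in the $A$-parameter, say $a$ and $a'$. The two corresponding left cosets agree only if $(n_2an_1^{-1})^{-1}(n_2a'n_1^{-1})=n_1a^{-1}a'n_1^{-1}$ lies in $\Ad_{n_1}[H_{t_1}]$, that is, $a^{-1}a'\in H_{t_1}\cap A$, which is trivial by Lemma~\ref{lem-trivial-intersection}; hence $a=a'$ and $F$ is injective. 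For surjectivity, a morphism of $\Groupoid_{\Oshima}$ with source and target in $\BigCell$ lifts to some $(\ldoublebracket n_2,t_2\rdoublebracket,g,\ldoublebracket n_1,t_1\rdoublebracket)$ with $\ldoublebracket gn_1,t_1\rdoublebracket=\ldoublebracket n_2,t_2\rdoublebracket$; unwinding the equivalence relations defining $\Oshima$ produces $a\in A$ with $t_2=a\cdot t_1$ and $g=n_2ahn_1^{-1}$ for some $h\in H_{t_1}$. The associated coset then simplifies to $gn_1H_{t_1}n_1^{-1}=n_2aH_{t_1}n_1^{-1}$, so the factor $h$ drops out and the morphism equals $F((n_2,t_2),a,(n_1,t_1))$.

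Finally, and this is the main obstacle, I would exhibit a smooth inverse, the subtlety being that the relation $t_2=a\cdot t_1$ alone does not determine $a$ when the stabilizer $A_I$ is positive-dimensional. Given a morphism $\gamma$, its source and target already supply $(n_1,t_1)$ and $(n_2,t_2)$ smoothly. Since the quotient map $q\colon G\ltimes\Oshima\to\Groupoid_{\Oshima}$ is a submersion (Lemma~\ref{lem-normal-subgroupoid-of-transformation-groupoid} and the normal-quotient construction), it admits smooth local sections, so we may choose a smooth lift $g=g(\gamma)\in G$; the surjectivity computation shows $n_2^{-1}gn_1\in aH_{t_1}\subseteq AH_{t_1}$. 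By Lemma~\ref{lem:NAH} the family multiplication map $N\times A\times\bigH\to\bigG$ is a diffeomorphism onto an open subset containing $\{(ah,t_1):a\in A,\,h\in H_{t_1}\}$, so applying its inverse to $(n_2^{-1}gn_1,t_1)$ extracts the $A$-component, which is precisely $a$ (its $N$-component being trivial). This extraction is independent of the chosen lift, since replacing $g$ by a right factor in $\Ad_{n_1}[H_{t_1}]$ alters $n_2^{-1}gn_1$ only by a right factor in $H_{t_1}$ and so leaves the $A$-component unchanged; the locally defined assignments $\gamma\mapsto((n_2,t_2),a,(n_1,t_1))$ therefore glue to a globally well-defined smooth map, which by the bijectivity above is the inverse of $F$. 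Consequently $F$ is an isomorphism of Lie groupoids.
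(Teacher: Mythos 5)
Your proof is correct, and while the homomorphism, injectivity and surjectivity steps run essentially parallel to the paper's (your injectivity argument is a minor variant: you read off $(n_2,t_2)$ from the target via Lemma~\ref{lem-big-cell} and reduce to $a^{-1}a'\in H_{t_1}\cap A=\{e\}$ by Lemma~\ref{lem-trivial-intersection}, whereas the paper derives $n_2a=n_2'a'h_1$ and invokes the uniqueness in Lemma~\ref{lem:NAH}), your treatment of the smoothness of the inverse is genuinely different from the paper's. The paper never constructs the inverse at all: it fits the map $\mathcal{W}\to\Groupoid_{\Oshima}\vert_{\BigCell}$ into a commutative square whose top horizontal map lands in $\bigG/\bigH\times_{\R^\Sigma}\bigG/\bigH$ and whose other three arrows are submersions onto their images (the left vertical one surjective), concludes the map is a submersion, and then counts dimensions to upgrade the resulting bijective submersion to a diffeomorphism. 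You instead build a smooth inverse explicitly: local smooth sections of the quotient submersion $G\ltimes\Oshima\to\Groupoid_\Oshima$ give a local lift $g(\gamma)$, the $NA\bigH$ diffeomorphism of Lemma~\ref{lem:NAH} applied to $(n_2^{-1}gn_1,t_1)$ extracts the $A$-parameter smoothly, and the observation that changing the lift multiplies $n_2^{-1}gn_1$ on the right by an element of $H_{t_1}$ shows the extraction is lift-independent, so the local inverses glue. Your route buys an explicit formula for the inverse and avoids any dimension count (so it would survive in situations where the dimension bookkeeping is less transparent); the paper's route is softer, avoiding the well-definedness and gluing verifications entirely by exploiting submersions already in hand. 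Both arguments are complete and of comparable length.
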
 

\begin{proof} 
Is straightforward to check that the map  is a morphism of Lie groupoids.  
To see that it is surjective, suppose that $\ldoublebracket g,g_1,t\rdoublebracket \in \Groupoid_{\Oshima}\vert_{\BigCell}$. Then 
\[
\ldoublebracket g_1,t\rdoublebracket \in \BigCell 
\quad \text{and} \quad 
\ldoublebracket gg_1,t\rdoublebracket \in \BigCell ,
\]
from which it follows that 
\[
(g_1,t) = (n_1h_1a_1, a_1^{-1}\cdot t_1)  
\quad \text{and} \quad 
(gg_1,t) = (n_2h_2a_2, a_2^{-1}\cdot t_2)
\]
for some $n_1,n_2\in N$, $h_1\in H_{t_1}$, $h_2\in H_{t_2}$, $a_1,a_2\in A$ and $t_1,t_2\in \R^\Sigma$ (the identities in the display are identities in $G {\times} \R^\Sigma$).  This implies that 
\[
\ldoublebracket g,g_1,t\rdoublebracket 
     = \ldoublebracket g , n_1h_1a_1, t\rdoublebracket
    = \ldoublebracket g, n_1,t_1\rdoublebracket
\]
and that $g n_1h_1a_1 = n_2h_2a_2$.  Write this as 
\[
g = n_2h_2a_2a_1^{-1}h_1^{-1}n_1^{-1}
= n_2a n_1^{-1}(n_1a^{-1}h_2a n_1^{-1})(n_1 h_1^{-1}n_1^{-1}),
\]
where $a=a_2a_1^{-1}$. The two terms in parentheses on the right-hand side of the display are elements of $H_{\ldoublebracket n_1,t_1\rdoublebracket}$, and so by definition of the Oshima groupoid,
\[
\ldoublebracket g,g_1,t\rdoublebracket 
     = \ldoublebracket n_2an_1^{-1} , n_1 , t_1\rdoublebracket,
\]
as required.

To prove that the map is injective, we start by noting that if  
\[
\ldoublebracket n_2an_1^{-1},n_1,t \rdoublebracket
=
\ldoublebracket n_2'a'n_1'{}^{-1},n_1',t' \rdoublebracket \in \Groupoid_{\Oshima}\vert_{\BigCell},
\]
then  $n_1{=}n_1'$ and $t{=}t'$  by Lemma~\ref{lem-big-cell}, so that 
\[
\ldoublebracket n_2an_1^{-1},n_1,t \rdoublebracket
=
\ldoublebracket n_2'a'n_1^{-1},n_1,t \rdoublebracket \in \Groupoid_{\Oshima}\vert_{\BigCell}. 
\]
It then follows from the definition of the Oshima groupoid that 
\[
n_2an_1^{-1} =  n_2'a'n_1^{-1} (n_1h_1n_1^{-1})
\]
for some $ h_1 \in H_{ t_1 }$.  This means that $n_2a  =  n_2'a' h_1$,
and from this and  Lemma~\ref{lem:NAH} it follows that $n_2 {=} n_2'$ and  $a{=}a'$.

It remains to prove that our smooth map is a diffeomorphism, and for this we proceed as in the proof of Lemma~\ref{lem-big-cell}.  The relevant commutative diagram here is 
\[
\xymatrix@C=150pt{
 N   {\times} A {\times} N {\times} A {\times} \R^\Sigma  
 \ar[d]  
 \ar[r]^-{(n_2,a_2,n_1,a_,t)  \mapsto   ([n_2a_2 ,t], [n_1a_1,t] )}&  \bigG/\bigH\times _{\R^\Sigma} \bigG/\bigH   \ar[d]
 \\
 \mathcal{W} \ar[r] & \Groupoid_{\Oshima} ,
}
\]
where the left and right vertical maps  are 
\begin{multline*}
(n_2,a_2,n_1,a_1,t)\mapsto 
 \bigl((n_2,a_2\cdot t),a_2a_1^{-1},(n_1,a_1\cdot t)\bigr)
\\
\text{and}\quad 
([g_2,t],[g_1,t])\mapsto \ldoublebracket g_2g_1^{-1}, g_1, t\rdoublebracket,
\end{multline*}
respectively, and  the bottom horizontal map is from the statement of our theorem.  All maps other than the last-mentioned are submersions onto their images, and the left vertical map is surjective, so the bottom horizontal map  is actually a submersion too. Counting dimensions, we find that it is a diffeomorphism.
\end{proof}

\subsection{Codimension-one submanifolds with normal crossings}
In this subsection only, $\Oshima$ will denote \emph{any} smooth  manifold without boundary.  The following definition is taken from \cite[Sec.\,2.3]{Monthubert03}, where the term \emph{d\'ecoupage} is used. 

\begin{definition} 
\label{def-normal-crossings}
A  \emph{normal crossing structure} on $\Man$  is a finite collection of closed embeddings $\ManS{\to} \Man$ of smooth, codimen\-sion-one submanifolds without boundary such  that around any point  $m\in \Oshima$ there are local coordinates $x_1,\dots, x_n$ centered at $m$ with respect to which each $\ManS$ passing through $m$ is one of the coordinate hyperplanes $\{ x_j = 0\}$, with distinct  $\ManS$ corresponding to distinct  $j$.
\end{definition}

Monthubert \cite{Monthubert03} and Nistor, Weinsten and Xu \cite{NistorWeinsteinXu99}, have associated to a normal crossing structure on $\Oshima$ a  Lie groupoid with object space $\Oshima$, or rather several groupoids that differ in minor respects (they all share the same Lie algebroid). We shall consider only the following special case, and present  the version of the construction that is best suited to our needs.

\begin{definition} 
Let $\Man$ be a smooth   manifold with normal crossing structure $\{\ManS{\to} \Man\}$. We shall say that the normal crossing structure is \emph{simple} if 
$\Oshima$ is connected, if each $\ManS$ is connected, and if, for every $k$, 
the mutual complement 
\[
\bigcap_{j=1}^k \,  \Oshima \setminus \ManS_j  
\]
of any set of  $k$ hypersurfaces in $\Oshima$ has precisely $2^k$ connected components.
\end{definition}

When   $\Man$ is a smooth, connected  manifold with simple normal crossing structure, we may associate to $\Oshima$ a   groupoid  $\Gamma (\Oshima)$  with object space $\Oshima$ as follows:
\begin{enumerate}[\rm (i)]

\item The morphism space is the set of triples $(m', T, m)$, where 
        \begin{enumerate}[\rm (a)]

        \item $m'$ and $m$ lie on the same collection $\ManS_1,\dots ,\ManS_k$ of hypersurfaces,

        \item $m'$ and $m$ lie on the same side  of all   the   hypersurfaces on which they do not lie, and 

        \item $T=\{ T_1,\dots , T_k\}$  is a family of orientation-preserving linear isomorphisms
        \[
        T_j\colon N_{m} \ManS_j\longrightarrow N_{m'} \ManS_j\qquad (j=1,\dots , k) 
        \]
        between fibers of the normal bundles of the hypersurfaces $\ManS_1,\dots ,\ManS_k$ in $\Man$.  (The hypothesis of simplicity implies that each normal bundle is orientable; moreover  if $T_j$ is orientation-preserving for one orientation, then it is orientation-preserving for the other, too.  So a choice of orientation is not required.) 
\end{enumerate} 

\item The source and target maps are 
        \[
        \operatorname{source}\colon (m', T, m) \mapsto m
        \quad \text{and} \quad 
        \operatorname{target}\colon (m', T, m) \mapsto m' ,
        \]
    and the composition law is 
        \[
        (m'', T', m')\circ (m', T, m) = (m'', T'T, m),
        \]
    where $T'T = \{\, T'_1T_1,\dots ,T'_kT_k\,\}$.
\end{enumerate}
See \cite[Sec.\,3]{Monthubert03}.

To understand how to equip the  groupoid $\Gamma ( \Oshima )$ with a Lie groupoid structure, let us consider the model case, in which $\Oshima = \R^{n}$, and in which the hypersurfaces $\ManS\to \Oshima$ are the hyperplanes $\{ x_j=0\}$ for $j=1,\dots, p$.   Denote by $A$ the abelian group $\R^p_+$, and let $A$ act on $\Oshima$ by coordinate-wise multiplication in the first $p$ places. Then form the transformation Lie groupoid 
\[
A \ltimes \Oshima =  \bigl\{ \, (m',a,m) \in \Oshima {\times} A {\times} \Oshima : 
m' = a \cdot m \, \bigr \} ,
\]
in which the  target  and source maps are the projections on the the first and third factors, and the composition law is 
\[
(m'',a',m') \circ (m',a,m) = (m'', a'a, m).
\]

In the model case, the normal bundle for each $\ManS{\to} \Oshima$ identifies with the constant bundle with fiber $\R$ in the obvious way, and so  if $(m',T,m)$ is a morphism in $\Gamma (\Oshima)$, then to each $T_j$ there corresponds to a positive real number. In the model case there is therefore an isomorphism 
\[
\begin{gathered}
\Gamma(\Oshima)  \stackrel\cong \longrightarrow A \ltimes \Oshima
\\
(m',T,m)  \longmapsto (m', a(T), m) ,
\end{gathered}
\]
 where 
\[
a(T)_j = \begin{cases} 
T_j & m_j =0 
\\
m'_j/m_j & m_j \ne 0.
\end{cases}
\]
The isomorphism gives $\Gamma (\Oshima)$ a Lie groupoid structure in the model case.  The general case is handled similarly, using local coordinates adapted to the embeddings $\ManS{\to}\Oshima$, as in Definition~\ref{def-normal-crossings}.

\begin{remark}
We might call the Lie groupoid $\Gamma (\Man)$ above  the \emph{$b$-groupoid} associated to the given simple normal crossing structure on  $\Man$, because it corresponds (up to some minor variations concerning connectedness of source fibers) to the groupoid introduced by Monthubert \cite{Monthubert:b-groupoid} and Nistor, Weinstein and Xu \cite{NistorWeinsteinXu99} in the study of Melrose's $b$-calculus \cite{Melrose:APS}.
The $C^\infty (M)$-module of vector fields on $\Man$ that are tangent to each of the embeddings  $\ManS{\to} \Man$ is finitely generated and projective, and is therefore the module of smooth sections of a vector bundle over $\Man$,  called the \emph{$b$-tangent bundle} $T^bM$.  It is equipped with a Lie bracket structure on its smooth sections and a natural morphism $T^b \Man\to TM$, and these together give $T^b\Man$ the structure of a Lie algebroid.  This is the Lie algebroid of $\Gamma (\Oshima)$.
\end{remark}

\subsection{Normal crossing structure on the Oshima space}
We shall now return to  the Oshima space    $\Oshima$ associated to a real reductive group, as in Section~\ref{sec-oshima-space}.  For each $\alpha\in \Sigma$ we may define a subset  $\mathcal{S}_\alpha\subseteq \Oshima$ by 
\[
S_\alpha = \{\, \ldoublebracket g,t\rdoublebracket  \in \Oshima : t_\alpha =0 \,\} .
\]
\begin{lemma} Each $\mathcal{S}_\alpha$ is a smooth, closed, codimension one submanifold of $\Oshima$, and the collection of closed embeddings $\ManS_\alpha{\to}\Oshima$ \textup{(}$\alpha \in \Sigma$\textup{)} gives $\Oshima$ a simple normal crossing structure.
\end{lemma}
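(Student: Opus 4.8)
The plan is to read every local assertion off the coordinate charts of Lemma~\ref{lem-big-cell} and their $G$-translates, and then to spend the real effort on the simplicity condition. First note that whether a coordinate $t_\alpha$ vanishes at a point $\ldoublebracket g,t\rdoublebracket$ is independent of the chosen representative: the $H_t$-identification leaves $t$ unchanged, while the $A$-action rescales $t_\alpha$ by the strictly positive factor $a^{-\alpha}$, so it preserves both the vanishing and the sign of each $t_\alpha$. Hence each $\ManS_\alpha$ is well-defined, and it is $G$-invariant since the left $G$-action fixes the $\R^\Sigma$-coordinate. Composing the diffeomorphism of Lemma~\ref{lem-big-cell} with left translation by $g$ gives a chart $g{\cdot}\BigCell\cong N\times\R^\Sigma$, and in it $\ManS_\alpha$ is exactly the coordinate hyperplane $\{t_\alpha=0\}$. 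This shows at once that $\ManS_\alpha$ is a smooth embedded codimension-one submanifold without boundary; it is closed because its intersection with each member of the open cover $\{g{\cdot}\BigCell\}_{g\in G}$ (the cover noted after Lemma~\ref{lem-big-cell}) is closed in that member. For the condition of Definition~\ref{def-normal-crossings}, observe that at a point $m=\ldoublebracket gn_0,t_0\rdoublebracket$ the hypersurfaces through $m$ are precisely those $\ManS_\alpha$ with $t_{0,\alpha}=0$; in the chart coordinates $(n,t)$, suitably recentred at $m$, these are distinct coordinate hyperplanes $\{t_\alpha=0\}$, so distinct $\ManS_\alpha$ correspond to distinct coordinates, as required.

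It remains to verify the three clauses of simplicity, and all three will reduce to a single connectivity statement: \emph{for every $I\subseteq\Sigma$ the orbit $G/A_IH_I$ is connected}, proved in the last paragraph. The mechanism is the elementary fact that if a space is a union of connected subsets each meeting one fixed connected subset, then it is connected. For $\Oshima$ and for each $\ManS_\alpha$ we take the fixed subset to be the minimal orbit $\mathcal{O}_{\min}=G\cdot\ldoublebracket e,0\rdoublebracket$, which by Lemma~\ref{lem-orbit-structure} is $G/A_\emptyset H_\emptyset=G/\overline P_{\min}$ (the case $I=\emptyset$); it is contained in every $\ManS_\alpha$ and meets every chart $g{\cdot}\BigCell$ in the point $\ldoublebracket g,0\rdoublebracket$. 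Since each chart $g{\cdot}\BigCell\cong N\times\R^\Sigma$ is connected and each slice $g{\cdot}\BigCell\cap\ManS_\alpha\cong N\times\{t_\alpha=0\}$ is connected, it follows that $\Oshima$ and every $\ManS_\alpha$ are connected.

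For the component count, fix distinct $\alpha_1,\dots,\alpha_k$ and consider the mutual complement $W=\bigcap_{j=1}^k(\Oshima\setminus\ManS_{\alpha_j})=\{\,\ldoublebracket g,t\rdoublebracket:t_{\alpha_j}\neq0\ \forall j\,\}$. The sign map $s\colon W\to\{\pm1\}^k$, $s(\ldoublebracket g,t\rdoublebracket)=(\sign t_{\alpha_1},\dots,\sign t_{\alpha_k})$, is well-defined (by the first paragraph) and locally constant, so $W$ is the disjoint union of the $2^k$ open-and-closed sets $U_\epsilon=s^{-1}(\epsilon)$, each nonempty; thus $W$ has exactly $2^k$ components precisely when each $U_\epsilon$ is connected. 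Fixing $\epsilon$ and letting $t_\epsilon\in\R^\Sigma$ have $t_{\epsilon,\alpha_j}=\epsilon_j$ and all other entries $0$, the orbit $Z_\epsilon=G\cdot\ldoublebracket e,t_\epsilon\rdoublebracket\cong G/A_IH_I$ with $I=\{\alpha_1,\dots,\alpha_k\}$ lies in $U_\epsilon$ and meets every chart $g{\cdot}\BigCell$ in $\ldoublebracket g,t_\epsilon\rdoublebracket$. Each slice $U_\epsilon\cap g{\cdot}\BigCell\cong N\times\{t:\epsilon_jt_{\alpha_j}>0\ \forall j\}$ is a product of $N$ with a convex set, hence connected, and $Z_\epsilon$ is connected by the fact below; so $U_\epsilon$ is connected by the same union argument.

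The main obstacle is the orbit connectivity, where the possible disconnectedness of $G$ and $K$ must be controlled. Since $A_IH_I=A_IK_I\overline N_I\subseteq\overline P_I=M_IA_I\overline N_I=\theta[P_I]$, the projection $G/A_IH_I\to G/\overline P_I$ is a fibre bundle with fibre $\overline P_I/A_IH_I$, and one checks, using $L_I=M_IA_I$ and $L_I\cap\overline N_I=\{e\}$, that $\overline P_I/A_IH_I\cong M_I/K_I$ with $K_I=K\cap M_I$. The base $G/\overline P_I$ is connected, being a continuous image of $G/\overline P_{\min}=K/M$, which is connected because $M$ meets every component of $K$; and the fibre $M_I/K_I$ is the Riemannian symmetric space of the reductive group $M_I$ with maximal compact $K_I$, hence connected (indeed contractible, via the Cartan decomposition of $M_I$). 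A fibre bundle with connected base and connected fibre has connected total space, so $G/A_IH_I$ is connected. This establishes all three clauses of simplicity and completes the proof.
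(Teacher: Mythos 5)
Your proposal is correct, and its first paragraph is precisely the paper's entire proof: the paper simply says that the lemma ``is made evident by the coordinatization in Lemma~\ref{lem-big-cell},'' leaving all three simplicity clauses to the reader (a nearby remark also offers the alternative view of the $\ManS_\alpha$ as fixed-point sets of the generators of the $\Z_2^\Sigma$-action on $\Oshima$). Where you genuinely go beyond the paper is in the global part: the charts $g{\cdot}\BigCell$ only give local normal-crossing structure, and connectedness of $\Oshima$, of each $\ManS_\alpha$, and the exact count of $2^k$ components are global statements. Your reduction of all three to connectedness of the orbits $G/A_IH_I$ -- gluing the connected chart slices along a single connected orbit, and splitting each complement by the locally constant sign map -- is exactly the missing content, and your proof of orbit connectivity (fibering $G/A_IH_I$ over $G/\overline P_I$ with fibre $M_I/K_I$, using that $M$ meets every component of $K$ and the Cartan decomposition of $M_I$) is the right way to handle the possible disconnectedness of $G$ and $K$, which is the one point where ``evident'' could reasonably be contested. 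In short: same local argument as the paper, plus a complete and correct verification of the simplicity conditions that the paper asserts without proof; the only mild redundancy is that contractibility of $M_I/K_I$ is more than you need, connectedness suffices.
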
 

\begin{proof} 
This is made evident by  the coordinatization in Lemma~\ref{lem-big-cell}. 
\end{proof}

\begin{remark}
    \label{rem-finite-group-action}
Another way to see the normal crossing structure is to observe that 
the finite group $\Z_2^\Sigma$ acts on $\R^\Sigma$ in the following way: think of $\Z_2$ as the multiplicative group $\{ \pm 1\}$, and then define 
\[
(s \cdot t ) _\alpha = s_\alpha t_\alpha\qquad \forall \alpha \in \Sigma
\] 
The $\Z_2^\Sigma$ action on $\R^\Sigma$ passes to compatible actions on $\bigG$, $\bigG/\bigH$ and $\Oshima$, all commuting with the left action of $G$.  For example, the action on $\Oshima$ is 
\[
 s\cdot \ldoublebracket g,t\rdoublebracket = \ldoublebracket g,st\rdoublebracket\qquad \forall s\in \Z_2 ^\Sigma,\,\,\, \forall  \ldoublebracket g,t\rdoublebracket \in \Oshima.
\]
The fixed point set in $\Oshima$ associated to any element of $\Z_2^\Sigma$ is automatically a smooth submanifold.  The fixed point sets for the standard generators in $\Z_2^\Sigma$ are our hypersurface embeddings $\ManS_\alpha {\to}\Oshima$.
\end{remark}

\subsection{Identification of the Oshima groupoid with the  b-groupoid}
The action of $G$  on the Oshima space  $\Man$ maps each $\ManS_\alpha$ into iteslf, and it also maps each component of each $\Oshima \setminus \ManS_\alpha$ into itself. As a result the induced action on the normal bundle of $\ManS_\alpha{\to}\Oshima$ is orientation-preserving, and we may define a morphism of Lie groupoids 
\begin{equation}
    \label{eq-morphism-to-b-groupoid}
\begin{gathered}
G\ltimes \Oshima \longrightarrow \Gamma (\Oshima) 
\\
(m',\gamma ,m) \longmapsto (m', [\gamma], m),
\end{gathered}
\end{equation}
where $[\gamma]$ indicates the induced action of $\gamma \in G$ on normal bundles  (from the fibers at $m$ to the fibers at $m'$).

\begin{theorem} 
\label{thm-isomorphism-with-b-groupoid}
The map \eqref{eq-morphism-to-b-groupoid} induces an isomorphism of Lie groupoids from $\Groupoid_{\Oshima}$ \textup{(}viewed as a quotient of the action groupoid\textup{)} to the Lie groupoid  $\Gamma (\Oshima)$.
\end{theorem}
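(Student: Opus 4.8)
The plan is to exhibit \eqref{eq-morphism-to-b-groupoid} as a $G$-equivariant morphism of Lie groupoids over $\Oshima$ which, first, annihilates the normal subgroupoid $\Subgroupoid_{\Oshima}$ of Lemma~\ref{lem-normal-subgroupoid-of-transformation-groupoid} and therefore descends to a morphism $\Groupoid_{\Oshima}\to\Gamma(\Oshima)$, and which, second, restricts to an isomorphism over the big cell $\BigCell$ of Lemma~\ref{lem-big-cell}. Since both groupoids have unit space $\Oshima$, and the descended map is the identity on units, and since the $G$-translates $\gamma\cdot\BigCell$ cover $\Oshima$, $G$-equivariance will then upgrade the local isomorphism over $\BigCell$ to a global isomorphism of Lie groupoids.

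For the first point I must check that every element $\bigl(\ldoublebracket g,t\rdoublebracket, h, \ldoublebracket g,t\rdoublebracket\bigr)$ of $\Subgroupoid_{\Oshima}$, with $h\in H_{\ldoublebracket g,t\rdoublebracket}$, acts trivially on the normal bundle of each hypersurface $\ManS_\alpha$ through $\ldoublebracket g,t\rdoublebracket$. Using $G$-equivariance and Lemma~\ref{lem-orbit-structure} I reduce to the base point $\ldoublebracket e,t_I\rdoublebracket$ with $I=\{\alpha:t_\alpha\neq0\}$, where $H_{\ldoublebracket e,t_I\rdoublebracket}=H_I=K_I\overline N_I$ and the relevant hypersurfaces are the $\ManS_\alpha$ with $\alpha\notin I$. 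To compute the induced map on $N_{\ldoublebracket e,t_I\rdoublebracket}\ManS_\alpha$ I perturb the point to $\ldoublebracket h,t'\rdoublebracket$, where $t'$ agrees with $t_I$ except that $t'_\alpha=\varepsilon$, and use the $NA\bigH$-factorisation of Lemma~\ref{lem:NAH} to rewrite it in big-cell form $\ldoublebracket n(\varepsilon),\,a(\varepsilon)^{-1}\!\cdot t'\rdoublebracket$. Because $h\in H_I$ one has $n(0)=a(0)=e$, so the normal coordinate is carried to $a(\varepsilon)^{-\alpha}\varepsilon=\varepsilon+O(\varepsilon^2)$ and the induced normal map is the identity. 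This is the computation I expect to require the most care. By contrast, the same calculation for $a\in A_I$ produces multiplication by $a^\alpha$, which is precisely the nontrivial normal action recorded by $\Gamma(\Oshima)$.

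For the second point I work in the coordinates $(n,t)$ on $\BigCell\cong N\times\R^\Sigma$. The only hypersurfaces meeting $\BigCell$ are the $\{t_\alpha=0\}$, with the $N$-directions free, so by the model-case computation of $\Gamma$ the reduction $\Gamma(\Oshima)\vert_{\BigCell}$ is the product of the pair groupoid of $N$ with the model $b$-groupoid $\R^\Sigma_+\ltimes\R^\Sigma$ on $\R^\Sigma$. On the other side, Theorem~\ref{thm-big-cell-in-the-groupoid} identifies $\Groupoid_{\Oshima}\vert_{\BigCell}$ with $\mathcal{W}$, the product of the pair groupoid of $N$ with the transformation groupoid $A\ltimes\R^\Sigma$ of Definition~\ref{def-a-action}. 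I then identify these two products by keeping the pair-groupoid-of-$N$ factors equal and sending the $A$-factor to the $\R^\Sigma_+$-factor through $a\mapsto(a^\alpha)_{\alpha\in\Sigma}$, under which the action $t\mapsto a\cdot t$ becomes coordinatewise scaling $t_\alpha\mapsto a^\alpha t_\alpha$. The content of the step is to check that this is exactly what \eqref{eq-morphism-to-b-groupoid} does in these coordinates: the element $\ldoublebracket n_2an_1^{-1},n_1,t_1\rdoublebracket$, with source $(n_1,t_1)$ and target $(n_2,a\cdot t_1)$, is carried to the $b$-groupoid morphism whose $N$-part is the pair $(n_2,n_1)$ and whose normal scaling along each $\ManS_\alpha$ is $a^\alpha$. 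This follows from the normal-bundle computation above together with the observation that $N$ acts on the $t_\alpha$-directions by translation, hence trivially on the normal bundles. Orientation-preservation of all the normal maps, required for a morphism into $\Gamma(\Oshima)$, is guaranteed because $G$ maps each side of each $\ManS_\alpha$ to itself, equivalently by the $\Z_2^\Sigma$-symmetry of Remark~\ref{rem-finite-group-action}.

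Finally, the explicit local models show the descended map is a diffeomorphism over $\BigCell$, and because \eqref{eq-morphism-to-b-groupoid} is $G$-equivariant the same holds over every translate $\gamma\cdot\BigCell$; as these cover $\Oshima$, the map is a global isomorphism of Lie groupoids. The principal obstacle throughout is the normal-bundle computation linking the Lie-theoretic $A$-action on $\R^\Sigma$ to the geometric normal scalings $a^\alpha$ of the hypersurfaces $\ManS_\alpha$, and in particular the verification that the isotropy $H_I$ carried by $\Subgroupoid_{\Oshima}$ acts trivially there while $A_I$ realises exactly the scaling group of the $b$-groupoid.
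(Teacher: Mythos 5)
Your proposal is correct, and its skeleton coincides with the paper's: first show that the isotropy family $\Subgroupoid_{\Oshima}$ acts trivially on the normal bundles of the hypersurfaces $\ManS_\alpha$, so that \eqref{eq-morphism-to-b-groupoid} descends to a morphism $\Groupoid_{\Oshima}\to\Gamma(\Oshima)$; then identify both reductions to $\BigCell$ with the product of the pair groupoid of $N$ and a scaling groupoid over $\R^\Sigma$, using Theorem~\ref{thm-big-cell-in-the-groupoid} on one side and the model computation of $\Gamma$ on the other; finally globalize by $G$-equivariance, the translates $\gamma\cdot\BigCell$ covering $\Oshima$. Where you genuinely differ is in the normal-triviality step, which is precisely where the paper invests its technical effort (Lemmas~\ref{lem-zero-derivative-extension} and~\ref{lem-h-elements-act-trivially-on-normal-bundles}). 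The paper extends a given $h\in H_{t_0}$ to a smooth section $t\mapsto h_t$ of $\bigH$ whose $t_\alpha$-derivative vanishes at $t_0$ --- an argument that uses the explicit formula for $\lie{h}_t$ and the fact that $t^{2\gamma}$ vanishes to \emph{second} order in $t_\alpha$ --- and then concludes by the Leibniz rule applied to $\ldoublebracket e,t\rdoublebracket = \ldoublebracket h_t,t\rdoublebracket$. You instead push a normal curve $\varepsilon\mapsto\ldoublebracket e,t'(\varepsilon)\rdoublebracket$ through $h$ and rewrite $\ldoublebracket h,t'(\varepsilon)\rdoublebracket$ in big-cell coordinates via the $N\times A\times\bigH$ decomposition of Lemma~\ref{lem:NAH}; smoothness of that factorization together with $n(0)=a(0)=e$ forces the normal coordinate of the image to be $\varepsilon + O(\varepsilon^2)$. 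Both arguments are sound. Yours is the softer one: it uses only the openness and smoothness of the decomposition in Lemma~\ref{lem:NAH}, and never the quadratic dependence of $\lie{h}_t$ on $t$. The paper's argument is more computational but isolates the exact algebraic mechanism (the exponent $2$ in $t^{2\gamma}$) and produces the zero-derivative sections, which are of independent use.

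Two minor points. Your reduction to the base point $\ldoublebracket e,t_I\rdoublebracket$ literally covers only points whose nonzero coordinates are positive; for general sign patterns the computation is verbatim the same (since $\lie{h}_t$ depends on $t$ only through $t^{2\gamma}$), or one applies the $\Z_2^\Sigma$-symmetry of Remark~\ref{rem-finite-group-action}, so this is harmless. Second, a caveat you share with the paper rather than a gap relative to it: your identification of the $A$-factor of $\mathcal{W}$ with $\R^\Sigma_+$ via $a\mapsto(a^\alpha)_{\alpha\in\Sigma}$ is injective only when $A_\Sigma$ is trivial, and the same hypothesis is silently required by the paper's assertion that $\bigl((n_2,t_2),a,(n_1,t_1)\bigr)\mapsto\bigl((n_2,t_2),[a],(n_1,t_1)\bigr)$ is an isomorphism: when $G$ has noncompact split center, $\Groupoid_{\Oshima}$ has generic isotropy $A_\Sigma$ while $\Gamma(\Oshima)$ has none.
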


Our proof of Theorem~\ref{thm-isomorphism-with-b-groupoid} will  use the following technical computation.

\begin{lemma}
    \label{lem-zero-derivative-extension}
    Let $t_0 \in \R^\Sigma$ and let $h\in H_{t_0}$.  There is a smooth section of $\bigH$, which we shall write as $t\mapsto (h_t,t)$ such that $h_{t_0}=h$ and 
    \[
    \left . \frac{\partial}{\partial t_\alpha} \right  \vert _{t=t_0} h_{t} = 0 
    \]
    for every $\alpha\in \Sigma$ such that $t_{0,\alpha } = 0$.
\end{lemma}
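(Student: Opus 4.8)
The plan is to build the desired section out of a product decomposition of $h$, extending each factor by means of the explicit polynomial coefficients $t^{2\gamma}$ that appear in the definition of $\lie h_t$. The whole point will be that these coefficients are \emph{even} functions of each coordinate $t_\alpha$, and so are automatically flat to first order at any zero coordinate; this makes the vanishing-derivative condition come for free once the extension is chosen naturally.

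First I would decompose $h$ exactly as in the proof of Theorem~\ref{thm:H}, writing $h = m\exp(X_1)\cdots \exp(X_k)$ with $m\in M$ and $X_1,\dots,X_k\in \lie h_{t_0}$. Using the direct sum description of $\lie h_{t_0}$ from Definition~\ref{def-h-t}, each $X_j$ has a unique expression $X_j = Y_{j} + \sum_{\gamma}\bigl(t_0^{2\gamma}Z_{j,\gamma}+\theta(Z_{j,\gamma})\bigr)$ with $Y_j\in\lie m$ and $Z_{j,\gamma}\in\lie g_\gamma$; uniqueness holds because $Z\mapsto t_0^{2\gamma}Z+\theta(Z)$ is injective on $\lie g_\gamma$ (the component in $\lie g_{-\gamma}$ already determines $Z$ via $\theta$). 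I then define the canonical extension
\[
X_{j,t} = Y_{j} + \sum_{\gamma}\bigl(t^{2\gamma}Z_{j,\gamma}+\theta(Z_{j,\gamma})\bigr),
\]
which is a smooth section of $\pmb{\lie h}$ with $X_{j,t_0}=X_j$, since each summand lies in $\lie h_t$ by Definition~\ref{def-h-t} and $t^{2\gamma}$ is polynomial in $t$. Setting $h_t = m\exp(X_{1,t})\cdots\exp(X_{k,t})$ then produces a smooth map $\R^\Sigma\to\bigG$ landing in $\bigH$, because $\exp(X_{j,t})\in H_{0,t}\subseteq H_t$ and $m\in M\subseteq H_t$ (Definitions~\ref{def:H-circ} and~\ref{def-of-all-h-t-groups}), so $t\mapsto(h_t,t)$ is a smooth section of $\bigH$ with $h_{t_0}=h$.

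The crucial observation is that $\partial_{t_\alpha}X_{j,t}\vert_{t_0}=0$ whenever $t_{0,\alpha}=0$. Indeed, by Definition~\ref{def_h} the coefficient $t^{2\gamma}$ depends on $t_\alpha$ only through the factor $t_\alpha^{2n_{\gamma,\alpha}}$: if $n_{\gamma,\alpha}=0$ there is no dependence, while if $n_{\gamma,\alpha}\ge 1$ the exponent $2n_{\gamma,\alpha}$ is at least $2$, so the $t_\alpha$-derivative $2n_{\gamma,\alpha}\,t_\alpha^{2n_{\gamma,\alpha}-1}\cdot(\text{rest})$ vanishes at $t_\alpha=0$. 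Hence $\partial_{t_\alpha}t^{2\gamma}\vert_{t_0}=0$, and since the $Z_{j,\gamma}$ and $\theta(Z_{j,\gamma})$ are constant, $\partial_{t_\alpha}X_{j,t}\vert_{t_0}=0$.

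Finally I would invoke the Leibniz rule in $G$: the tangent vector $\partial_{t_\alpha}h_t\vert_{t_0}$ is a sum of $k$ terms, the $j$-th obtained by left- and right-translating the inner derivative $\partial_{t_\alpha}\exp(X_{j,t})\vert_{t_0} = (d\exp)_{X_j}\bigl(\partial_{t_\alpha}X_{j,t}\vert_{t_0}\bigr)$ (the constant factor $m$ contributing nothing). By the previous paragraph each inner derivative vanishes for $\alpha$ with $t_{0,\alpha}=0$, so every term vanishes and $\partial_{t_\alpha}h_t\vert_{t_0}=0$, as required. I do not expect a serious obstacle: the only points needing care are the uniqueness of the root-space components, which makes the extension well defined, and the bookkeeping in the Leibniz rule, but the substance of the argument is the elementary fact that $t^{2\gamma}$ is even, hence first-order flat, in each coordinate at a zero of that coordinate.
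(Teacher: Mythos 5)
Your proof is correct and follows essentially the same route as the paper's: decompose $h$ as $m\exp(X_1)\cdots\exp(X_k)$, extend each $X_j$ to a section of $\pmb{\lie h}$ using the coefficients $t^{2\gamma}$, and conclude from the fact that $t^{2\gamma}$ vanishes to second order in $t_\alpha$ at $t_\alpha=0$. The extra details you supply (uniqueness of the root-space components and the explicit Leibniz-rule bookkeeping) are correct but are left implicit in the paper's argument.
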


\begin{proof}
We may write 
\[
h_{t_0} = k \exp(Y_1) \cdots \exp(Y_k)
\]
for some $k\in M$ and $Y_1,\dots , Y_k\in \mathfrak{h}_{t_0}$. We define   
\[
h_t = k \exp(Y_{1,t}) \cdots \exp(Y_{k,t}),
\]
where each $Y_{j,t}$ has the form
\[
Y_{j,t} = Z + \sum _{\gamma \in \Delta ^+(\mathfrak{g},\mathfrak{a})}
       t^{2 \gamma}X_\gamma  {+} \theta(X_\gamma )   
\]
for some  $Z\in \mathfrak{m}$, $X_{\gamma} \in \mathfrak{g}_\gamma$,   and where $Y_{j,t_0}=Y_j$.  Compare Definition~\ref{def-h-t}.   Since $t^{2\gamma}$ vanishes to second order in $t_\alpha$ when $t_\alpha=0$, 
the map $t\mapsto h_t$ has the required properties.
\end{proof}

\begin{lemma}
    \label{lem-h-elements-act-trivially-on-normal-bundles}
    Let $t_0 \in \R^\Sigma$ and suppose that $t_{0,\alpha}=0$ for some $\alpha \in \Sigma$.  Every element of  $H_{t_0}$, viewed as a diffeomorphism from $\Oshima$ to itself using the action of $G$ on $\Oshima$, acts as the identity operator on the fiber of the normal bundle of the hypersurface $\ManS_\alpha$ at the point $\ldoublebracket e,t_0\rdoublebracket\in \Oshima$.
\end{lemma}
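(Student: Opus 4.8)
The plan is to realize the action of $h$ as a diffeomorphism $\Phi_h\colon\ldoublebracket g,t\rdoublebracket\mapsto\ldoublebracket hg,t\rdoublebracket$ of $\Oshima$, to check that it fixes the point $\ldoublebracket e,t_0\rdoublebracket$, and then to compute its differential on a vector representing the normal direction to $\ManS_\alpha$, using the chart of Lemma~\ref{lem-big-cell} and the section of Lemma~\ref{lem-zero-derivative-extension}. First I would note that $\Phi_h$ fixes $\ldoublebracket e,t_0\rdoublebracket$: since $h\in H_{t_0}$, the equivalence relation defining $\Oshima$ gives $\ldoublebracket h,t_0\rdoublebracket=\ldoublebracket e,t_0\rdoublebracket$, which is exactly $h\cdot\ldoublebracket e,t_0\rdoublebracket=\ldoublebracket e,t_0\rdoublebracket$. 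As recorded before the statement of Theorem~\ref{thm-isomorphism-with-b-groupoid}, $\Phi_h$ preserves $\ManS_\alpha$, so $d\Phi_h$ preserves $T\ManS_\alpha$ and induces a linear endomorphism of the rank-one fiber $N_{\ldoublebracket e,t_0\rdoublebracket}\ManS_\alpha$. In the coordinates $(n,t)\in N\times\R^\Sigma$ of Lemma~\ref{lem-big-cell} the hypersurface $\ManS_\alpha$ is $\{t_\alpha=0\}$, so if $e_\alpha\in\R^\Sigma$ denotes the indicator of $\alpha$, then the curve $s\mapsto\ldoublebracket e,\,t_0+se_\alpha\rdoublebracket$ is transverse to $\ManS_\alpha$, and its velocity $v$ at $s=0$ represents the normal direction. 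It therefore suffices to prove that $d\Phi_h(v)=v$.

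Next I would apply $\Phi_h$ to this curve and absorb $h$ into the group $H_t$. By Lemma~\ref{lem-zero-derivative-extension}, since $t_{0,\alpha}=0$, there is a smooth section $t\mapsto(h_t,t)$ of $\bigH$ with $h_{t_0}=h$ and $\partial_{t_\alpha}h_t\vert_{t_0}=0$. Because $h_t\in H_t$, right multiplication by $h_t$ is invisible in the quotient, so
\[
\Phi_h\bigl(\ldoublebracket e,t\rdoublebracket\bigr)=\ldoublebracket h,t\rdoublebracket=\ldoublebracket (hh_t^{-1})h_t,\,t\rdoublebracket=\ldoublebracket hh_t^{-1},\,t\rdoublebracket ,
\]
and hence $d\Phi_h(v)=\tfrac{d}{ds}\big\vert_{s=0}\ldoublebracket hh_{t_0+se_\alpha}^{-1},\,t_0+se_\alpha\rdoublebracket$.

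The decisive observation is that the lifted curve $s\mapsto\bigl(hh_{t_0+se_\alpha}^{-1},\,t_0+se_\alpha\bigr)$ in $\bigG$ passes through $(e,t_0)$ at $s=0$ with the same velocity $(0,e_\alpha)$ as the lift $s\mapsto(e,\,t_0+se_\alpha)$ of the original curve: its value at $s=0$ is $(hh^{-1},t_0)=(e,t_0)$, and the vanishing $\partial_{t_\alpha}h_t\vert_{t_0}=0$ forces $\partial_{t_\alpha}h_t^{-1}\vert_{t_0}=0$, so the $G$-component of the velocity is zero. Since the quotient map $\bigG\to\Oshima$ is smooth, the two curves have equal velocity after projection; thus $d\Phi_h(v)=v$. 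Consequently the induced endomorphism of $N_{\ldoublebracket e,t_0\rdoublebracket}\ManS_\alpha$ is the identity, which is the assertion of the lemma.

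The main obstacle is concentrated entirely in the input from Lemma~\ref{lem-zero-derivative-extension}: were the first derivative $\partial_{t_\alpha}h_t\vert_{t_0}$ nonzero, the factor $hh_t^{-1}$ would contribute a nonzero $G$-velocity and the normal derivative would be altered. The remaining work is only the bookkeeping of transferring this vanishing through the two quotient relations and the chart of Lemma~\ref{lem-big-cell}.
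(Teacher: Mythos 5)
Your proof is correct and follows essentially the same route as the paper: both arguments rest entirely on the section $t\mapsto h_t$ of Lemma~\ref{lem-zero-derivative-extension} and a first-order computation of the velocity of the transverse curve $t\mapsto\ldoublebracket e,t\rdoublebracket$ at $t_0$. The only cosmetic difference is that you absorb $h$ via $\ldoublebracket h,t\rdoublebracket=\ldoublebracket hh_t^{-1},t\rdoublebracket$ and compare velocities of lifts in $\bigG$, whereas the paper stays in $\Oshima$ and splits the derivative of $\ldoublebracket h_t,t\rdoublebracket$ by the Leibniz rule; these are the same computation in different packaging.
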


\begin{proof}
    The fiber of the normal bundle of the hypersurface $\ManS_\alpha$ at the point $\ldoublebracket e,t_0\rdoublebracket\in \Oshima$ is generated by the tangent vector 
    \[
    \left .\frac{\partial}{\partial t_{\alpha}} \right |_{t_0} \ldoublebracket e, t\rdoublebracket  \in T _{\ldoublebracket e,t_0\rdoublebracket} \Oshima.
    \]
    Given $h\in H_{t_0}$, if $h_t$ is as in the previous lemma, with $h_{t_0} = h$, then by the Leibniz rule and the lemma,
    \[
    \begin{aligned}
        \left .\frac{\partial}{\partial t_{\alpha}} \right |_{t_0} \ldoublebracket e, t\rdoublebracket  
        &= \left .\frac{\partial}{\partial t_{\alpha}} \right |_{t_0} \ldoublebracket h_t, t\rdoublebracket  
        = \left .\frac{\partial}{\partial t_{\alpha}} \right |_{t_0} \ldoublebracket h, t\rdoublebracket  + \left .\frac{\partial}{\partial t_{\alpha}} \right |_{t_0} \ldoublebracket h_t, t_0\rdoublebracket  
         = \left .\frac{\partial}{\partial t_{\alpha}} \right |_{t_0} h\cdot \ldoublebracket e, t\rdoublebracket,
    \end{aligned}    
    \]
    which proves the lemma.
\end{proof}

\begin{proof}[Proof of Theorem~\ref{thm-isomorphism-with-b-groupoid}]
Recall that the Oshima groupoid may be obtained as the quotient of the transformation groupoid $G \ltimes \Oshima$ by the smooth family of groups $H_{\ldoublebracket g,t\rdoublebracket}$ over $\Oshima$, viewed as a normal subgroupoid of the transformation groupoid. It follows from Lemma~\ref{lem-h-elements-act-trivially-on-normal-bundles} that if $h\in H_{\ldoublebracket g,t\rdoublebracket}$, and if $t_\alpha = 0$, then $h$ acts as the identity operator on the fiber of the normal bundle of the hypersurface $\ManS_\alpha$ at the point $\ldoublebracket e,t_0\rdoublebracket\in \Oshima$.  So the morphism of Lie groupoids
\[
G\ltimes \Oshima \longrightarrow \Gamma (\Oshima) 
\]
in \eqref{eq-morphism-to-b-groupoid} factors through a morphism of Lie groupoids
\begin{equation}
    \label{eq-functor-from-oshima-groupoid-to-b-groupoid}
\Groupoid_{\Oshima} \longrightarrow \Gamma (\Oshima) 
\end{equation}
It remains to prove that the latter is a diffeomorphism.

For this, we will compose \eqref{eq-functor-from-oshima-groupoid-to-b-groupoid} with two local diffeomorphisms.  Firstly, we have the Lie groupoid isomorphism
\begin{equation}
    \label{eq-functor-from-w-to-oshima-groupoid}
  \mathcal{W} \stackrel\cong \longrightarrow    \Groupoid_{\Oshima}\vert_{\BigCell} 
\end{equation}
from Theorem~\ref{thm-big-cell-in-the-groupoid}.  Secondly, the diffeomorphism of Lemma \ref{lem-big-cell} gives a Lie groupoid isomorphism 
\begin{equation}
\label{eq-functor-from-b-groupoid-to-big-cell-b-groupoid}
    \Gamma(\Oshima)\vert_{\BigCell} \stackrel\cong\longrightarrow \Gamma(N\times\R^\Sigma),
\end{equation}
where the right-hand side is the $b$-groupoid of $N\times\R^\Sigma$ with simple normal crossing structure coming from the coordinate hyperplanes in $\R^\Sigma$.  Composing \eqref{eq-functor-from-w-to-oshima-groupoid}, \eqref{eq-functor-from-oshima-groupoid-to-b-groupoid} and \eqref{eq-functor-from-b-groupoid-to-big-cell-b-groupoid}, we get the morphism of Lie groupoids $\mathcal{W}   \to \Gamma (N \times \R^\Sigma )$ given by
\[
\bigl ((n_2,t_2), a, (n_1, t_1)\bigr ) 
\longmapsto 
\bigl (  ( n_2,t_2) , [n_2an_1^{-1} ] , (n_1,t_1) \bigr) .
\]
Since it is evident that $n_2$ and $n_1$ act trivially on normal bundles for $N{\times}\R^\Sigma$, we may also write this as 
\[
\bigl ((n_2,t_2), a, (n_1, t_1)\bigr ) 
\longmapsto 
\bigl (  ( n_2,t_2) , [a  ] , (n_1,t_1) \bigr) ,
\]
which is an isomorphism of Lie groupoids.  

This proves that
\[
   \Groupoid_\Oshima\vert_\BigCell \cong \Gamma(\Oshima)\vert_{\BigCell}.
\]
To complete the proof, note that the morphism of Lie groupoids \eqref{eq-functor-from-oshima-groupoid-to-b-groupoid} is equivariant with respect to the $G$-actions
\[
  g\cdot (m',\gamma, m) = (g\cdot m', g\gamma g^{-1}, g\cdot m), 
  \qquad \bigl(g\in G,\ (m',\gamma,m) \in \Groupoid_\Oshima \bigr),
\]
and
\[
  g\cdot (m',T, m) = (g\cdot m', [g]T[g^{-1}], m), 
  \qquad \bigl(g\in G,\ (m',T,m) \in \Groupoid_\Oshima \bigr).
\]
Since the $G$-translates of $\BigCell$ cover $\Oshima$, the result follows.
\end{proof}

\bibliographystyle{alpha}
\bibliography{refs}

\end{document}